\theoremstyle{plain}
\newtheorem{theorem}{Theorem}[section]
\theoremstyle{plainNoItalics}
\newtheorem{lemma}[theorem]{Lemma}
\newtheorem{remark}[theorem]{Remark}
\numberwithin{equation}{section}
\renewcommand{\div}{\nabla \cdot}
\newcommand{\R}{\mathbb{R}}
\newcommand{\F}{\mathbf{F}}
\newcommand{\n}{\mathbf{n}}
\newcommand{\ino}{\int_{\Omega}}
\newcommand{\V}{\mathcal{V}_h^r}
\newcommand{\VE}{\mathcal{V}_{h,i}}
\newcommand{\Vi}{\mathcal{V}_{h,i}^r}
\newcommand{\Vn}{\mathcal{V}_{h}^{n,r}}
\newcommand{\rM}{\gamma_M}
\newcommand{\rA}{\gamma_A}
\newcommand{\mt}{\mathcal{T}}
\newcommand{\me}{\mathcal{E}}
\newcommand{\mn}{\mathcal{N}}
\newcommand{\mf}{\mathcal{F}}
\newcommand{\Fhi}{\mathcal{F}_{h,i}}
\begin{document}

\title{High order  discontinuous cut finite element methods for linear hyperbolic conservation laws with an interface\thanks{Research was supported by the Swedish Research Council Grants No. 2018-05262, No. 2018-05279 and the Wallenberg Academy Fellowship KAW 2019.0190. }
}
\author{Pei Fu\footnotemark[1], 
Thomas Frachon\footnotemark[2], 
Gunilla Kreiss\footnotemark[1], 
Sara Zahedi\footnotemark[2] 
}
\footnotetext[1]{Division of Scientific Computing, Department of Information Technology, Uppsala University, SE-75105 Uppsala, Sweden. E-mail: pei.fu@it.uu.se, gunilla.kreiss@it.uu.se.}
\footnotetext[2]{Department of Mathematics, KTH Royal Institute of Technology, SE-10044 Stockholm, Sweden. Email: frachon@kth.se, sara.zahedi@math.kth.se.}
\date{}
\maketitle

\begin{abstract}
We develop a family of cut finite element methods of different orders based on the discontinuous Galerkin framework, for hyperbolic conservation laws with stationary interfaces in both one and two space dimensions, and for moving interfaces in one space dimension. Interface conditions are imposed weakly and so that both conservation and stability are ensured. A CutFEM with discontinuous elements in space is developed and coupled to standard explicit time stepping schemes for linear advection problems and the acoustic wave problem with stationary interfaces. In the case of moving interfaces,  we propose a space-time CutFEM based on discontinuous elements both in space and time for linear advection problems.  We show that the proposed CutFEM are conservative and energy stable. For the stationary interface case an a priori error estimate is proven. Numerical computations in both one and two space dimensions support the analysis, and in addition demonstrate that the proposed methods have the expected accuracy.

\bigskip
\noindent {\bf Keywords:} Hyperbolic conservation laws; Cut finite element method;  Discontinuous Galerkin method;  Interface condition; Stability estimate

\end{abstract}

\section{Introduction}
A finite element method (FEM) that uses discontinuous piecewise polynomial spaces as trial and test spaces is commonly called a discontinuous Galerkin (DG) method. Already in 1973 a DG method was introduced for the neutron transport  equation \cite{Reed1973Triangular}. Later for nonlinear time dependent hyperbolic conservation laws DG discretizations in space were coupled to Runge-Kutta time discretizations and limiters see e.g. \cite{ShuDG2,ShuDG3,ShuDG5}. The approach was shown to work well, retaining high order accuracy, conservation and other important properties, and has become very popular. For more details on DG methods, we refer to \cite{hesthaven2007nodal,shu2009discontinuous}.

Most finite element methods require the mesh to be aligned to boundaries and material interfaces and to achieve the full potential accuracy the mesh quality needs to be high. This type of requirements can be problematic for problems posed on complicated geometries or with material interfaces, especially when the geometry is evolving.  For time-dependent problems the time step may be severely restricted. To overcome these problems, approaches using fixed background meshes, unfitted to boundaries and interfaces, are of great interest. However, in a naive approach small cut elements will cause problems,  including ill-conditioned linear systems and severe time-step restrictions. Various techniques have been introduced to handle such difficulties.  One approach often used in unfitted methods based on DG is cell merging or agglomeration techniques where new elements of sufficient size are created by merging small cut elements with their neighbours \cite{Johansson2013,kummer2017extended,modisette2010toward,muller2017high,Qin2013}. A  common technique in connection with Cut Finite Element Methods (CutFEM) is to add ghost penalty stabilization terms in the weak form \cite{burman2010ghost,burman2012fictitious}.  In CutFEM the physical domain is embedded into a computational domain equipped with a quasi-uniform mesh. Elements that have an intersection with the domain of interest define the active mesh and associated to that is a finite dimensional function space and a weak form that together define the numerical scheme \cite{massing2014stabilized,hansbo2014cut,frachon2019cut,sticko2016stabilized}. Interface and boundary conditions are typically imposed weakly. For hyperbolic problems, CutFEM based on discontinuous piecewise polynomial spaces and ghost penalty stabilization has been developed, e.g. see \cite{gurkan2020stabilized}  where a time independent linear advection-reaction problem is considered and see \cite{fu2021high} for time dependent nonlinear conservation laws. We also refer to the recent work \cite{engwer2020stabilized} where a DG method for time dependent linear advection problems is developed with a stabilization of small elements that is designed to restore proper domains of dependence.

The focus of this paper is conservation at material interfaces. We consider time dependent linear hyperbolic conservation laws with discontinuous coefficients in the flux at a stationary or moving interface. We assume the problem has a structure  such that requiring conservation yields sufficient conditions at the interface for well-posedness. Typically the interface condition requires the solution to be discontinuous at the interface. Such problems can model for example wave propagation in materials where the wave speed changes abruptly at a material interface.

The first main result is an extension of the family of high order CutFEM with ghost penalty stabilization in~\cite{fu2021high} to problems with stationary interfaces. In the new method the solution is built from separate solutions on the two sides of the interface, and coupled through the interface condition, which is imposed weakly through penalties in the weak form. We show how to choose these penalties such that both conservation and stability is ensured, independently of how the interface cuts the elements. Numerical results demonstrate that conservation can indeed be lost with other choices. We note that our proposed CutFEM scheme is locally conservative in elements away from the interface and in the patch of elements involving the interface. The stability result is based on a semi-discrete energy analysis, which generalises the stability result in \cite{la2016well}, where a high order finite difference methodology with a grid  aligned with the interface is analyzed. We apply our method to the scalar advection equation in one and two space dimensions,  and to an acoustic system in one space dimension, but the proposed method can be applied to other hyperbolic systems with similar structure.

A second result is a space-time CutFEM for the case of a moving interface. We use a framework similar  to that
proposed in \cite{hansbo2016cut,zahedi2017space,frachon2019cut}, but here we use discontinuous elements both in space and time. The interface condition is imposed weakly as above, with the same restriction on penalties imposed by conservation. Our analysis as well as our numerical results show that the choice of weak form is important for achieving conservation in the discrete setting. Since the space-time formulation corresponds to an implicit time discretization, the method is computationally more demanding than the proposed method for the case of a stationary interface. However, we also demonstrate a strategy where the space-time CutFEM is restricted to the interface region, and coupled to a standard DG method with explicit time discretization in other parts of the domain.

The paper is organized as follows. In Section 2,  the model problem is given.  In Section 3,  we consider a stationary interface, propose a discontinuous cut finite element discretization in space and perform a stability analysis, and an a priori error estimate is given for the scalar problems. Numerical examples  show that the proposed  method has the expected convergence rate, is conservative, and allows for similar time steps as a corresponding standard DG method.  In Section 4 we consider a moving interface and propose a space-time CutFEM. The stability of the semi-discrete scheme is analyzed and we  present some examples to show that the method can simulate the moving interface problem with expected accuracy and with conservation.  In section 4.6, we formulate a locally implicit CutFEM. In Section~\ref{sec:2d}, we extend our scheme to the advection equation in two space dimensions with a material interface. Finally, in Section~\ref{sec:conclusion} we conclude.

\section{Model problem}
Let $x_\Gamma(t)$ be an interface that separates the domain $\Omega=[x_L, x_R]$
into two subdomains $\Omega_1=[x_L,x_\Gamma(t)]$ and $\Omega_2=[x_\Gamma(t),x_R]$. Consider the hyperbolic conservation law
\begin{alignat}{2}
&u_{t}+F(u)_{x}=0, && \quad {x \in \Omega_1\cup \Omega_2,  t>0,}  \label{eq:model} \\
&{u(x,0)=f(x),} &&  \quad   {x \in \Omega_1\cup \Omega_2,} \label{eq:initialcond} \\
&{[F(u)]_\Gamma-x_\Gamma'(t)[u]_\Gamma=0,} &&   \quad {
 t\geq0,} \label{eq:interfacecond}
\end{alignat}
 with suitable boundary conditions at $x_L$ and $x_R$.  Here the unknown conservative variable is
\begin{equation}\label{eq:u}
u=\left\{\begin{array}{ll}
 u_1, & x \in\Omega_1(t), \\
 u_2, & x\in\Omega_2(t),
\end{array} \right.
\end{equation}
which may be discontinuous across the interface $x_\Gamma(t)$ with jump
\begin{equation}\label{eq:defjumpu}
[u]_\Gamma=u_2(x_\Gamma,t)-u_1(x_\Gamma,t).
\end{equation}
We assume that $u_i$, $i=1,2$ are continuous functions with sufficiently many continuous derivatives, and that a discontinuity in $u$ may exist only at the interface. The flux function is
\begin{equation}\label{eq:flux}
F(u)=\left\{\begin{array}{ll}
F_1(u_1)\equiv A_1 u_1, & x \in\Omega_1(t), \\
F_2(u_2)\equiv A_2 u_2, & x\in\Omega_2(t),
\end{array}\right.
\end{equation}
with $A_1$ and $A_2$ being either constant scalars or matrices. We will only consider problems where the interface condition~\eqref{eq:interfacecond} ensures that the problem is well-posed. For the scalar case this means that $A_1-x_\Gamma'(t)$ and $A_2-x_\Gamma'(t)$ have the same sign. For systems the number of positive and negative eigenvalues of $A_i-x_\Gamma'(t)I$, $i=1,2$ with $I$ being the identity matrix, must be the same on both sides of the interface, and  the eigenstructure of the matrices must be such that the interface condition determines entering characteristic variables in terms of exiting characteristic variables.

Also, note that
\begin{align*}
\frac{d}{dt} \int_{x_L}^{x_R} u dx&=\frac{d}{dt}\int_{x_L}^{x_\Gamma(t)} u_1 dx+\frac{d}{d t}\int_{x_\Gamma(t)}^{x_R} u_2 dx\notag\\
&=\int_{x_L}^{x_\Gamma(t)}\partial_t u_1 d x+\int_{x_\Gamma(t)}^{x_R}\partial_{t} u_2 d x+u_{1}(x_\Gamma,t) x_\Gamma^{\prime}(t)-u_2(x_\Gamma,t)x_\Gamma^{\prime}(t)\notag\\
&=-\int_{x_L}^{x_\Gamma(t)} (F_1(u_1))_x d x-\int^{x_R}_{x_\Gamma(t)}(F_2(u_2))_xd x- x_\Gamma^{\prime}(t)[u]_\Gamma\notag\\
&=F_1(u_{1}(x_L,t))-F_1(u_1(x_\Gamma,t))+F_2(u_2(x_\Gamma,t))-F_2(u_2(x_R,t))-x_\Gamma^{\prime}(t)[u]_\Gamma.
\end{align*}
When the interface condition~\eqref{eq:interfacecond} is satisfied, i.e.
$$F_2(u_2(x_\Gamma,t))-F_1(u_1(x_\Gamma,t))-x_\Gamma^{\prime}(u_2(x_\Gamma,t)-u_1(x_\Gamma,t))=0,$$ we have
\begin{align}
\frac{d}{dt}\int_{x_L}^{x_R} u(x,t) dx=F_1(u_1(x_L,t))-F_2(u_2(x_R,t)).
\end{align}
Thus,  condition~\eqref{eq:interfacecond} ensures conservation of $u$.
In this paper, we will consider both a stationary interface, $x_\Gamma'=0$,  and a moving interface.

Before we propose a DG scheme for the problem \eqref{eq:model}-\eqref{eq:interfacecond} we introduce some notations. For square integrable scalar real valued functions on a given domain $K$, the standard notation is used for the  inner product and the $L^2$-norm, namely,
\begin{align}\label{L2}
(v,w)_{K}:=\int_{K} vw \ dx, \quad \|v\|_K:=\sqrt{(v,v)_{K}}, \quad \forall v, w\in L^2(K),
\end{align}
and for square integrable vector real valued functions $v, w$ with $m$ components, each component in $L^2$, we will use the same notation, but now $vw$ means the standard dot product, i.e.
\begin{align}\label{L2:vector}
(v,w)_{K}:=\int_K v^Twdx,
\quad \|v\|_K:=\sqrt{(v,v)_{K}}, \quad \forall v, w\in {[L^2(K)]}^{m}.
\end{align}
Furthermore
\begin{align}\label{L2union}
(v,w)_{\Omega_1 \cup \Omega_2}:= \sum_{i=1}^2 (v,w)_{\Omega_i}.
\end{align}

\section{Stationary interface}
Consider \eqref{eq:model}-\eqref{eq:interfacecond} in the case of a stationary interface, that is with  $x_\Gamma'=0$ and  interface condition  $[F(u)]_\Gamma=0$.  In the following we define the mesh, the space, and the weak formulation for a cut finite element method based on the DG framework.

\subsection{Mesh and spaces}
\label{sec:notapro}
Let $\mt_h$ be a quasi-uniform partition of the domain $\Omega$ generated independently of the position of the interface and let
$\me_h$ denote the set containing the edges in this mesh.  The mesh consists of intervals $I_{j}=[x_{j-\frac{1}{2}},x_{j+\frac{1}{2}}], j=1,\cdots,N$ with length $\Delta x_j=x_{j+\frac{1}{2}}-x_{j-\frac{1}{2}}$, and $x_{L}=x_{\frac{1}{2}}<x_{\frac{3}{2}}<\cdots<x_\Gamma<\cdots<x_{N+\frac{1}{2}}=x_{R}$. The mesh size is $h=\max_{1\leq j\leq N} \Delta x_j$. See Fig.  \ref{fig:cutmiddle1} for an illustration.

\begin{figure}[!htp]
\begin{tikzpicture}[xscale=4]
\draw[-][draw=black, very thick] (0,0) -- (.4,0);
\draw[dotted][draw=black, very thick] (0.4,0) -- (0.8,0);
\draw[-][draw=black, very thick] (0.8,0) -- (1.2,0);
\draw[-][draw=red, very thick] (1.2,0) -- (1.35,0);
\node[red][above] at (1.28,0.2) {${\alpha_1 h}$};
\draw[-][draw=red, very thick] (1.35,0) -- (1.6,0);
\node[red][above] at (1.5,0.2) {${\alpha_2h}$};
\draw[-][draw=black, very thick] (1.6,0) -- (2.0,0);
\draw[dotted][draw=black, very thick] (2.0,0) -- (2.4,0);
\draw[-][draw=black, very thick] (2.4,0) -- (2.8,0);
\draw [thick] (0,-.2) node[below]{$x_L=x_{\frac{1}{2}}$} -- (0,0.2);
\draw [thick] (.4,-.2) node[below]{$x_{\frac{3}{2}}$} -- (0.4,0.2);
\draw [thick] (0.8,-.2) node[below]{$x_{J-\frac{3}{2}}$} -- (0.8,0.2);
\draw [thick] (1.2,-.2) node[below]{$x_{J-\frac{1}{2}}$} -- (1.2,0.2);
\draw[dotted] [blue][thick] (1.35,-0.5) node[below]{$x_{\Gamma}$} -- (1.35,0.5);
\draw [thick] (1.6,-.2) node[below]{$x_{J+\frac{1}{2}}$} -- (1.6,0.2);
\draw [thick] (2.0,-.2) node[below]{$x_{J+\frac{3}{2}}$} -- (2.0,0.2);
\draw [thick] (2.4,-.2) node[below]{$x_{N-\frac{1}{2}}$} -- (2.4,0.2);
\draw [thick] (2.8,-.2) node[below]{$x_R=x_{N+\frac{1}{2}}$} -- (2.8,0.2);
\end{tikzpicture}
\caption{A uniform partition of  $\Omega$ with mesh size $h=(x_R-x_L)/N$. In this case, the interface splits the cell $I_{j}$ into two cells of length $\alpha_1 h$ and $\alpha_2h$.}\label{fig:cutmiddle1}
\end{figure}
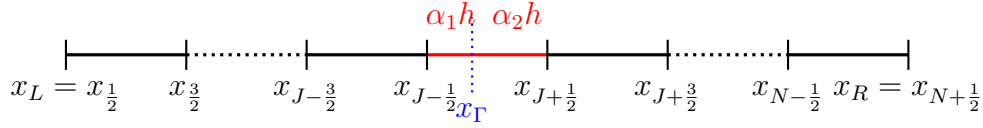
Define the following active meshes
\begin{equation}\label{eq: meshi}
  \mt_{h,i}=\left\{I_j \in \mt_h: I_j \cap \Omega_{i} \neq \emptyset\right\}, \quad  i=1,2,
\end{equation}
and the set of edges
\begin{equation}\label{eq: edgesi}
\me_{h,i}=\{e\in\me_h:  e\cap \Omega_i\neq \emptyset \}.
\end{equation}
Denote by $\Fhi$ the interior edge in $\me_{h,i}$ that belongs to a cut element, i.e.,
\begin{equation}
  \Fhi=\{e=I_j\cap I_k: I_j,I_k \in \mt_{h,i} \text{ and } x_\Gamma\in I_j \text{ or } I_k, \, j\neq k\}.
\end{equation}

Define the piecewise polynomial space
\begin{equation} \label{eq:BGspace}
\widetilde{\V}=\{v :  {v}|_{I_j} \in P^r(I_j), \,\forall I_j \in \mathcal{T}_h\},
\end{equation}
where $P^{r}(I_j)$ is the space of polynomials with degree at most $r$ on $I_j$. We note that if $v\in\widetilde{\V}$ is a vector, it means each of its component belongs to $P^r(I_j)$. Define the active finite element spaces
\begin{equation}
\Vi={\widetilde{\V}|_{\mt_{h,i}}}, \, i=1,2,
\end{equation}
and let $\V=\mathcal{V}_{h,1}^r \times \mathcal{V}_{h,2}^r$. Hence, with $v\in \V$ we mean $v=(v_1,v_2)$ with $v_i \in \Vi$.

For any $v(\cdot,t) \in \mathcal{V}_{h,i}^r$ at a fixed time $t\in[0,T]$, let $v^+ $ and $v^- $ denote the limit values of $v$ at $x$ from right and left, i.e.,
\begin{align}\label{eq: limitvx}
 v^-(x,t)=\lim\limits_{\epsilon\to 0^+}v(x-\epsilon,t), \quad v^+(x,t)=\lim\limits_{\epsilon\to 0^+}v(x+\epsilon,t).
 \end{align}
Define the average and the jump of the function $v$ at an edge $e\in\me_{h,i}$ by
\begin{align}\label{eq: jumpandmean}
 \{v\}_e=\frac{1}{2}(v^++v^-), \quad [v]_e=v^+-v^-.
 \end{align}
On the interface the average and jump of $v(\cdot,t)=(v_1(\cdot,t),v_2(\cdot,t))\in \V$ are defined by
\begin{align}\label{def:aver:jump:interface}
\{v\}_\Gamma=\frac{1}{2}(v_1(x_\Gamma,t)+v_2(x_\Gamma,t)),\quad [v]_\Gamma=v_2(x_\Gamma,t)-v_1(x_\Gamma,t).
\end{align}

\subsection{Weak formulation}
We now state a semi-discrete  weak formulation. For $t\in[0,T]$, find $u_h(\cdot,t)$ $\in \V$ such that
\begin{align}
\label{scheme:state:DG}
\left( (u_h(\cdot,t))_t,v_h \right)_{\Omega_1\cup\Omega_2}+\gamma_MJ_1((u_h(\cdot,t))_t,v_h)+A_h(u_h(\cdot,t),v_h)=0, \, t>0, \\
\label{scheme:initial}
(u_h(\cdot,0),v_h)_{\Omega_1\cup\Omega_2}+\gamma_MJ_1(u_h(\cdot,0),v_h)=(f(x),v_h)_{\Omega_1\cup\Omega_2},
\end{align}
for all $v_h\in\V$. Here
\begin{align}\label{scheme:Ah}
A_h(u_h,v_h)&=a_h(u_h,v_h)+\gamma_AJ_0(u_h,v_h),
\end{align}
with
\begin{align}\label{scheme:ah}
a_h(u_h,v_h)=&- (F(u_h),(v_h)_x)_{\Omega_1\cup\Omega_2} -\sum_{i=1}^{2}\sum_{e\in\me_{h,i}} \widehat{F}_e(u_h)[v_h]_e \nonumber \\
&-\left([F(u_h)v_h]_\Gamma+[F(u_h)]_\Gamma[\lambda v_h]_\Gamma\right),
\end{align}
and
\begin{equation}\label{stable0}
J_s(u_h, v_h)=\sum_{i=1}^2\sum_{e \in \Fhi} \sum_{k=0}^{r} \omega_kh^{2 k+s}\left[\partial^ku_{h,i}\right]_e\left[\partial^{k} v_{h,i}\right]_{e}.	
\end{equation}
The stabilization terms $J_s(u,v),s=0,1$ are added in order to have a stable scheme independently of how the interface cuts the background mesh. Otherwise, the mass matrix may be nearly singular, which can cause very severe time step restriction or ill-conditioning.
 The parameters in front of the stabilization terms, $\gamma_M,\gamma_A$ and $\omega_k$ are positive constants.  The choice is not unique and we choose $\omega_k=\frac{1}{(k!)^2(2k+1)}$ as in \cite{schoeder2020high}. The penalty parameter at the interface in \eqref{scheme:ah} is
 \begin{equation}\label{eq:lam}
\lambda=\left\{\begin{array}{ll}
 \lambda_1, & x \in\Omega_1(t), \\
 \lambda_2, & x\in\Omega_2(t).
\end{array} \right.
\end{equation} The choice of values will be discussed below.
We note that interfaces between elements and boundary edges are handled with the usual DG methodology, where we choose a single-valued function $\widehat{F}_e$ to approximate $F(u_h)$ on edge $e$.
In this paper,  the flux $\widehat{F}_e$  is chosen as
\begin{align}\label{eq:fluxe:state}
\widehat{F}_e(u_h)=\{F(u_h)\}_e-\frac{\lambda_e}{2}[u_h]_e,  \quad e\in\me_{h,i}.
\end{align}
Here, $\{F(u_h)\}_e=\frac{1}{2}\left(F(u_h^-)+F(u_h^+)\right)$, $[u_h]_e=u_h^+-u_h^-$, and $\lambda_e$ is an estimate of the largest absolute eigenvalue of the Jacobian $\frac{\partial F(u_h)}{\partial u_h}$ in the neighbourhood of edge $e$. This flux is known as the Lax-Friedrichs flux. In this paper we only consider the Lax-Friedrichs flux, but other monotone fluxes  are also possible.  At the boundaries of the domain, $x_L$ and $x_R$, we define the average and jump of a test functions $v_h$ as
 \begin{align}\label{eq: jumpandmean:bc}
 \{v_h\}_L=v_h^+, \,  \{v_h\}_R=v_h^-, \quad [v_h]_L=v_h^+,\, [v_h]_R=-v_h^-.
\end{align}
For scalar problems, we use the  inflow boundary condition $u(x,t)=g(t), x=x_L$ or $x_R$. To specify the corresponding values $u_h$ on $x_L$ and $x_R$, we set
 \begin{align}
 u_h^-(x_L,\cdot)=\left\{\begin{array}{ll}
g_{h}(\cdot), & \text{ if } a_i>0, \\
u_h^+(x_L,\cdot), & \text{ if } a_i<0,
\end{array} \right. \quad
 u_h^+(x_R,\cdot)=\left\{\begin{array}{ll}
g_{h}(\cdot), & \text{ if } a_i<0, \\
u_h^-(x_R,\cdot), & \text{ if } a_i>0.
\end{array} \right.
\end{align}
 Here $g_h$ is the approximation of the given boundary data $g$ at the inflow boundary. For the approximation of boundary conditions in systems see \cite{ShuDG3}.

To derive the weak formulation above we multiply equation \eqref{eq:model} by a test function $v\in \V$, integrate by parts, and enforce the interface condition \eqref{eq:interfacecond}. Since $v \in \V$ is discontinuous across interior edges, integration by parts results in jump terms  $[F(u)v]_e$ across interior edges.  Since $[u]_e=0$ these terms can be rewritten as
\begin{align}\label{eq:fluxone}
 [F(u)v]_e=\{F(u)\}_e[v]_e-\frac{\lambda_e}{2}[u]_e[v]_e=\widehat{F}_e(u)[v]_e.
\end{align}
Since the stabilization terms $J_s(u,v)$ vanish when $u$ is a sufficiently smooth exact solution, the proposed  formulation is consistent.

Taking test function $v_h=1$ in the scheme \eqref{scheme:state:DG}, we have
\begin{align}
\int_\Omega (u_h)_tdt &=\widehat{F}_L(u_h(x_L,t))-\widehat{F}_R(u_h(x_R,t))
+\left(\lambda_2-\lambda_1+1\right)[F(u_h)]_\Gamma.
\end{align}
Clearly, the proposed scheme is conservative only if \begin{equation}\label{eq:conservcondlamb}
\lambda_2-\lambda_1+1=0.
\end{equation}
If this condition is satisfied it follows that
\begin{align}
\frac{d}{dt}\int_\Omega u_h dx=\widehat{F}_L-\widehat{F}_R.
\end{align}
We note that our scheme is locally  conservative on the elements that along with their neighbors do not involve the interface, like $\{I_j\}_{j=1}^{J-2}$ and $\{I_{j}\}_{j=J+2}^{N}$ in Fig.  \ref{fig:conservationdomain}. The method is also locally conservative in subintervals containing elements which need to be stabilized, like the subinterval   $K_1=(I_{J-1}\cup I_{J})\cap\Omega_1$ and $K_2=(I_{J}\cup I_{J+1})\cap\Omega_2$ in Fig. \ref{fig:conservationdomain}.
For the scheme to be stable the penalty parameters need to satisfy additional requirements. See Theorem~\ref{thm:stabilityscalarp} for the scalar problem and Theorem~\ref{thm:stabilityacoustic} for the acoustic system, for choices that yield a conservative and energy stable interface treatment.

\begin{figure}[!htp]
\begin{tikzpicture}[xscale=4]
\draw[-][draw=black, very thick] (-0.2,0) -- (.4,0);
\draw[decorate, decoration={brace},yshift=1.5ex]  (0,0) -- node[above=0.4ex] {$I_{J-3}$}  (.4,0);
\draw [thick] (0,-.2) -- (0,0.25);
\draw [thick] (0.4,-.2) -- (0.4,0.25);
\draw[-][draw=black, very thick] (0.4,0) -- (0.8,0);
\draw [thick] (0.4,-.2) -- (0.4,0.25);
\draw [thick] (0.8,-.2) -- (0.8,0.25);
\draw[decorate, decoration={brace},yshift=1.5ex]   (0.4,0) -- node[above=0.4ex] {$I_{J-2}$}  (.8,0);
\draw[-][draw=black, very thick] (0.8,0) -- (1.2,0);
\draw [thick] (0.8,-.2)  -- (0.8,0.25);
\draw [thick][red] (1.2,-.2) -- (1.2,0.25);
\draw [thick][red] (1.2,-.3) node[below] {$\mf_{h,1}$} -- (1.2,0.25);
\draw[-][draw=black, very thick] (1.2,0) -- (1.35,0);
\draw[decorate, decoration={brace},yshift=1.5ex]   (0.8,0) -- node[above=0.4ex] {$K_1$}  (1.35,0);
\draw[-] [blue,very thick] (1.35,-0.5) node[below]{$x_{\Gamma}$} -- (1.35,0.5);
\draw[-][draw=black, very thick] (1.35,0) -- (1.6,0);
\draw[-][draw=black, very thick] (1.6,0) -- (2.0,0);
\draw[decorate, decoration={brace},yshift=1.5ex]   (1.35,0) -- node[above=0.4ex] {$K_2$}  (2.0,0);
\draw [thick][red] (1.6,-.3) node[below]{$\mf_{h,2}$} -- (1.6,0.25);
\draw[-][draw=black, very thick] (2.0,0) -- (2.4,0);
\draw [thick] (2.0,-.2) node[below]{$x_{J+\frac{3}{2}}$} -- (2.0,0.25);
\draw[decorate, decoration={brace},yshift=1.5ex]   (2.0,0) -- node[above=0.4ex] {$I_{J+2}$}  (2.4,0);
\draw [thick] (2.4,-.2) node[below]{$x_{J+\frac{5}{2}}$} -- (2.4,0.25);
\draw[-][draw=black, very thick] (2.4,0) -- (3.0,0);
\draw[decorate, decoration={brace},yshift=1.5ex]   (2.4,0) -- node[above=0.4ex] {$I_{J+3}$}  (2.8,0);
\draw [thick] (0,-.2) node[below]{$x_{J-\frac{7}{2}}$} -- (0,0.25);
\draw [thick] (.4,-.2) node[below]{$x_{J-\frac{5}{2}}$} -- (0.4,0.25);
\draw [thick] (0.8,-.2) node[below]{$x_{J-\frac{3}{2}}$} -- (0.8,0.25);
\draw [thick] (2.8,-.2) node[below]{$x_{J+\frac{7}{2}}$} -- (2.8,0.25);
\end{tikzpicture}
\caption{Illustration of intervals where the proposed scheme is locally conservative.}\label{fig:conservationdomain}
\end{figure}
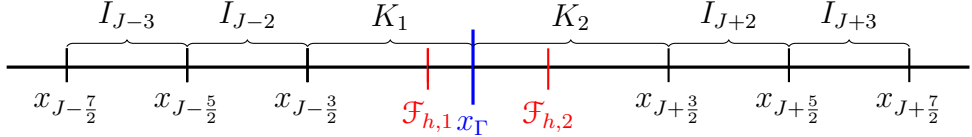

\subsection{Stability analysis} \label{sec:stability}
Here we will use the energy method to investigate how requiring stability of the proposed semi-discrete scheme will  restrict the choice of the penalty parameters $\lambda_1$ and $\lambda_2$ beyond \eqref{eq:conservcondlamb}.

\subsubsection{The scalar problem}\label{sec:scalar:state}
We consider the linear scalar problem, i.e.
\begin{equation}\label{eq:linear:state}
F(u)=au=\left\{\begin{array}{ll}
F_1(u)\equiv a_1u_1, & x \in \Omega_1, \\
F_2(u)\equiv a_2u_2, & x \in \Omega_2,
\end{array}\right.
\end{equation}
where $a_1,a_2$ are non-zero constants with the same sign.
 In the proposed scheme we use $\lambda_e=|a_i|$ in equation \eqref{eq:fluxe:state}.

Let $v_h=(u_{h,1},0)$ and $v_h=(0, u_{h,2})$ in \eqref{scheme:state:DG}, integrate
$(a_iu_{h,i},(u_{h,i})_x)_{\Omega_i}$, and take into account discontinuities of $u_{h,i}$ across the edges to get
\begin{align}
&\frac{1}{2}\frac{d}{dt} \left (\int_{\Omega_1} |u_{h,1}|^2 dx + \gamma_M J_1(u_{h,1},u_{h,1}) \right) +\sum_{e\in\me_{h,1}}\left(\frac{a_1}{2} [u_{h,1}^2]_e -\widehat{F_{e}}(u_{h,1})[u_{h,1}]_e\right) \notag  \\
&+(\frac{a_1}{2}-a_1\lambda_1)|u_{h,1}(x_\Gamma,t)|^2+\lambda_1a_2u_{h,2}(x_\Gamma,t)u_{h,1}(x_\Gamma,t) + \gamma_A J_0(u_{h,1},u_{h,1})=0, \notag\\
&\frac{1}{2}\frac{d}{dt}  \left ( \int_{\Omega_2} |u_{h,2}|^2 dx + \gamma_M J_1(u_{h,1},u_{h,1}) \right)
+\sum_{e\in\me_{h,2}}\left(\frac{a_2}{2}[u_{h,2}^2]_e-\widehat{F_{e}}(u_{h,2}) [u_{h,2}]_e\right)\notag \\
&-(a_2\lambda_2+\frac{a_2}{2})|u_{h,2}(x_\Gamma,t)|^2+
\lambda_2 a_1u_{h,2}(x_\Gamma,t)u_{h,1}(x_\Gamma,t)+ \gamma_A J_0(u_{h,2},u_{h,2})=0.
\label{energy}
\end{align}
By $\frac{a}{2}[u_{h}^2]_e=a\{ u_h \}_e[u_{h}]_e$ and the definition of the flux, equation \eqref{eq:fluxe:state}, we have
\begin{equation} \label{eq:edgeterms}
\frac{a_i}{2}[u_{h,i}^2]_e-\widehat{F_{e}}(u_{h,i})[u_{h,i}]_e=\frac{\lambda_e}{2} [u_{h,i}]^2_e,\quad i=1,2.
\end{equation}

Introducing a weighted energy $E_\eta$, where $\eta$ is a positive constant,
\begin{align}
E_\eta(t) = & \frac{1}{2} \left(\int_{\Omega_1} |u_{h,1}|^2 dx + \gamma_M J_1(u_{h,1},u_{h,1}) \right)\notag\\
& + \frac{\eta}{2} \left(\int_{\Omega_2} |u_{h,2}|^2 dx + \gamma_M J_1(u_{h,2},u_{h,2}) \right),
\label{eq:define:energy}
\end{align}
we have from \eqref{energy}, \eqref{eq:edgeterms} with $\lambda_e=|a_i|$ for $e\in \me_{h,i}$ that
\begin{align} \label{eq:energyA}
\frac{d}{dt} E_\eta=-\mathbf{u}_h^T S \mathbf{u}_h
&-\sum_{e\in\me_{h,1}}\frac{|a_1|}{2}[u_{h,1}]_e^2-\sum_{e\in\me_{h,2}}\frac{|a_2|\eta}{2}[u_{h,2}]_e^2 \nonumber \\
&-  \gamma_A J_0(u_{h,1},u_{h,1})-\eta \gamma_A J_0(u_{h,2},u_{h,2}).
\end{align}
Here
\begin{equation}\label{eq:matrixAs}
S=\left(
\begin{array}{cc}
   (\frac{1}{2}-\lambda_1)a_1 & \frac{a_2\lambda_1+ a_1\eta \lambda_2}{2}   \\
\frac{a_2\lambda_1+a_1\eta\lambda_2}{2}  & -(\lambda_2+\frac{1}{2})\eta a_2
   \end{array}
\right),\quad \mathbf{u}_h= \left(\begin{array}{c}u_{h,1}(x_\Gamma,t)\\u_{h,2}(x_\Gamma,t)\end{array}\right).
\end{equation}
If $S$ is positive semi-definite, then the energy is non-increasing with time. In  Appendix \ref{appendix:proof}, Lemma  \ref{lemma:matrixA} states conditions for $S$ to be positive semi-definite. We summarize the results for scalar problems in the following theorem.

\begin{theorem}\label{thm:stabilityscalarp}
Consider the discontinuous cut finite element method \eqref{scheme:state:DG} for the scalar problem \eqref{eq:model}-\eqref{eq:interfacecond} with $F(u)$ as in \eqref{eq:linear:state} with $x_\Gamma'(t)=0$ and stabilization parameters $\gamma_A\geq 0, \gamma_M\geq 0$. For penalty parameters $\lambda_1$ and $\lambda_2$, that satisfy  \eqref{eq:conservcondlamb} and
\begin{align}\label{eq:state:parametercondition}
\left\{\begin{array}{ll}
{\lambda_1\leq\frac{1}{2},\lambda_2\leq-\frac{1}{2},}&{\text{if } a_1>0,a_2>0,}\\
{\lambda_1\geq\frac{1}{2},\lambda_2\geq-\frac{1}{2},}&{\text{if } a_1<0,a_2<0,}\\
\end{array}\right.
\end{align}
the method is conservative and there exists a positive $\eta$ such that the energy defined in \eqref{eq:define:energy} does not grow with time.
\end{theorem}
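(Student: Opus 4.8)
The plan is to separate the two claims. Conservation is immediate: the hypothesis \eqref{eq:conservcondlamb}, $\lambda_2-\lambda_1+1=0$, is exactly the algebraic identity isolated just above through the choice $v_h=1$ in \eqref{scheme:state:DG}, so $\frac{d}{dt}\int_\Omega u_h\,dx=\widehat F_L-\widehat F_R$ and the scheme conserves $u_h$. The real content is therefore the existence of a weight $\eta>0$ for which the energy $E_\eta$ of \eqref{eq:define:energy} is non-increasing.

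For the stability part I would start from the two semi-discrete identities obtained by taking $v_h=(u_{h,1},0)$ and $v_h=(0,u_{h,2})$ in \eqref{scheme:state:DG}, integrating the volume term $a_i(u_{h,i},(u_{h,i})_x)_{\Omega_i}$ by parts, and using $\tfrac{a_i}{2}[u_{h,i}^2]_e=a_i\{u_{h,i}\}_e[u_{h,i}]_e$ together with the Lax--Friedrichs flux \eqref{eq:fluxe:state} with $\lambda_e=|a_i|$ to collapse each interior-edge contribution to $\tfrac{|a_i|}{2}[u_{h,i}]_e^2$ as in \eqref{eq:edgeterms}; weighting the second identity by $\eta$ and adding gives precisely \eqref{eq:energyA}. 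On the right-hand side of \eqref{eq:energyA}, the edge sums $\sum_e\tfrac{|a_i|}{2}[u_{h,i}]_e^2$ are non-negative, and the ghost-penalty terms $\gamma_A J_0(u_{h,i},u_{h,i})$ are non-negative because $\gamma_A\geq0$ and $\omega_k>0$; hence $\frac{d}{dt}E_\eta\leq-\mathbf{u}_h^T S\,\mathbf{u}_h$, and it suffices to choose $\eta>0$ so that the matrix $S$ in \eqref{eq:matrixAs} is positive semi-definite. (This is also where the assertion that $E_\eta\geq0$ enters, so that ``non-increasing'' is meaningful; it holds since each term in \eqref{eq:define:energy} is non-negative.)

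The remaining step is a $2\times2$ linear-algebra check, which is what Lemma~\ref{lemma:matrixA} in Appendix~\ref{appendix:proof} records; I would organize it as follows. A symmetric $2\times2$ matrix is positive semi-definite iff its diagonal entries and its determinant are non-negative. For $\eta>0$ the diagonal entries $(\tfrac12-\lambda_1)a_1$ and $-(\lambda_2+\tfrac12)\eta a_2$ of $S$ are non-negative exactly under the sign conditions \eqref{eq:state:parametercondition}, given the common sign of $a_1$ and $a_2$; note the first entry does not depend on $\eta$ and the second is non-negative for every $\eta>0$. Substituting $\lambda_2=\lambda_1-1$ from \eqref{eq:conservcondlamb} and using $a_1a_2>0$, one then views $\det S$ as a function of $\eta$: it is a downward-opening parabola $\det S(\eta)=-\tfrac14(a_1\lambda_2)^2\eta^2+c_1\eta-\tfrac14(a_2\lambda_1)^2$ with $c_1=\tfrac12 a_1a_2\big((\lambda_1-\tfrac12)^2+\tfrac14\big)>0$, so its maximum is attained at $\eta^\star=c_1/\big(\tfrac12(a_1\lambda_2)^2\big)>0$ (here $\lambda_2\neq0$ since $|\lambda_2|\geq\tfrac12$), and $\det S(\eta^\star)\geq0$ iff the discriminant $c_1^2-\tfrac14 a_1^2a_2^2\lambda_1^2\lambda_2^2$ is non-negative. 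Writing $t=\lambda_1-\tfrac12$, this reduces to $(t^2+\tfrac14)^2\geq(t^2-\tfrac14)^2$, i.e. to $t^2\geq0$, which always holds; so this $\eta^\star$ works.

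I do not expect a genuine obstacle. The only care needed is bookkeeping: confirming the signs of the edge and ghost-penalty contributions in \eqref{eq:energyA}, keeping $a_1a_2>0$ straight across the two cases of \eqref{eq:state:parametercondition}, and verifying that at $\eta=\eta^\star$ both diagonal entries stay non-negative (they do, by the remark above). The single mildly clever point is the collapse of the determinant condition to $(\lambda_1-\tfrac12)^2\geq0$ after eliminating $\lambda_2$ and optimizing over $\eta$.
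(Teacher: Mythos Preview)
Your approach matches the paper's: both reduce stability to positive semi-definiteness of the $2\times2$ matrix $S$ in \eqref{eq:matrixAs}, which the paper isolates as Lemma~\ref{lemma:matrixA}. Your organization of the $2\times2$ check is a little cleaner than the paper's---the paper parametrizes by $\sigma=-\lambda_1+\tfrac12$ and $\beta=a_1/a_2$, computes the eigenvalues, and then derives explicit admissible intervals for $\eta\beta$, whereas you use the standard PSD criterion (non-negative diagonals plus non-negative determinant) and optimize $\det S$ directly over $\eta$, collapsing the determinant condition to $(\lambda_1-\tfrac12)^2\geq0$. Both routes arrive at the same conclusion; yours avoids writing down the eigenvalues and the interval endpoints, while the paper's gives more explicit information about which $\eta$ work.

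One small correction: your assertion ``$|\lambda_2|\geq\tfrac12$'' holds only in the case $a_1,a_2>0$. In the case $a_1,a_2<0$ the hypothesis is $\lambda_2\geq-\tfrac12$, which permits $\lambda_2=0$ (e.g.\ $\lambda_1=1$, $\lambda_2=0$). Then the quadratic degenerates to the linear function $c_1\eta-\tfrac14(a_2\lambda_1)^2$, which is positive for all sufficiently large $\eta$ since $c_1>0$, so the conclusion survives; you just need to say a word about this case.
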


\begin{remark}
The stability results in Theorem \ref{thm:stabilityscalarp} are derived assuming a conservative interface treatment with $\lambda_1$ and $\lambda_2$ satisfying   \eqref{eq:conservcondlamb}. Without the conservation condition \eqref{eq:conservcondlamb}, less restrictive stability results are possible. See Lemma \ref{lemma:matrix:stable} in Appendix A. In the right panel of Fig. \ref{exe:scalar:conservation} and Fig. \ref{fig:conservation2D}, we show that when the penalty parameters satisfy a stability condition but don't satisfy the conservation condition \eqref{eq:conservcondlamb},  the scheme produces large conservation errors.

\end{remark}

Based on the stability result we can  derive an a priori estimate for the scalar problems, if it has a sufficiently smooth solution.
\begin{theorem}\label{thm:scalarp:error}
Let $u(x,t)=u_i(x,t)$ for $x \in \Omega_i$, be the solution to problem \eqref{eq:model}-\eqref{eq:interfacecond} with $F(u)$ as in \eqref{eq:linear:state} and $x_\Gamma'(t)=0$. Assume $u_i$ is sufficiently smooth: $u_i, \,(u_i)_{t}  \in L^{\infty}\left([0,T] ; H^{r+1}({\Omega_i}) \right)$. Let $u_h=(u_{h,1}, u_{h,2}) $ with $u_{h,i}(\cdot,t)\in \Vi$  be the solution to the discontinuous cut finite element scheme \eqref{scheme:state:DG}. Then the following a priori error estimate hold
$$||u_1(\cdot,t)-u_{h,1}(\cdot,t)||^2_{\Omega_1}+||u_2(\cdot,t)-u_{h,2}(\cdot,t)||^2_{\Omega_2}\leq C h^{2r},\,t\in[0,T],$$
where $C$ is a constant which is independent of the mesh parameter $h$ and how the interface cuts the mesh.
\end{theorem}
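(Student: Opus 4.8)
The plan is to follow the standard energy-based a priori error analysis for discontinuous Galerkin methods, adapted to the cut setting with ghost penalty stabilization. First I would introduce a suitable projection operator $\pi_h$ onto the active spaces $\mathcal{V}_{h,i}^r$, namely the $L^2$-orthogonal projection (or a Gauss--Radau type projection adapted to the flow direction), and split the error on each subdomain as
\begin{equation*}
u_i - u_{h,i} = (u_i - \pi_h u_i) + (\pi_h u_i - u_{h,i}) =: \rho_i + \xi_i,
\end{equation*}
where $\rho_i$ is the projection error and $\xi_i \in \mathcal{V}_{h,i}^r$ is the discrete part. The projection error is controlled by standard approximation theory, $\|\rho_i\|_{\Omega_i} + \|(\rho_i)_t\|_{\Omega_i} \leq C h^{r+1}$, together with trace-type bounds on the edges and at the interface (of order $h^{r+1/2}$), and crucially the ghost penalty terms $J_s$ applied to $\rho_i$ must also be shown to be $\mathcal{O}(h^{2r+\cdots})$ using the fact that the exact solution is smooth across cut edges so the jumps $[\partial^k \rho_i]_e$ are genuinely small on the fictitious extension.

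Next I would exploit consistency: since the scheme is consistent (the stabilization vanishes on the smooth exact solution and the interface condition $[F(u)]_\Gamma = 0$ holds), $u$ satisfies the same weak form as $u_h$, so subtracting gives a Galerkin orthogonality relation for the total error. Choosing the test function $v_h = \xi = (\xi_1, \xi_2)$ and using the coercivity structure already established in the stability analysis, one arrives at an estimate of the form
\begin{equation*}
\frac{1}{2}\frac{d}{dt} E_\eta(\xi) + (\text{nonnegative edge, interface, and }J_0\text{ terms}) = (\text{projection-error terms tested against }\xi).
\end{equation*}
Here $E_\eta(\xi)$ is the weighted discrete energy from \eqref{eq:define:energy} with $\eta$ the positive constant furnished by Theorem~\ref{thm:stabilityscalarp}, so that the interface matrix $S$ is positive semi-definite and the left-hand side is a genuine energy. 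The right-hand side collects terms like $(( \rho_i)_t, \xi_i)_{\Omega_i}$, $\gamma_M J_1((\rho_i)_t, \xi_i)$, $(F(\rho_i), (\xi_i)_x)_{\Omega_i}$, the numerical-flux contributions $\widehat F_e(\rho)[\xi]_e$, the interface terms involving $[F(\rho)\xi]_\Gamma$ and $[F(\rho)]_\Gamma[\lambda\xi]_\Gamma$, and the stabilization cross term $\gamma_A J_0(\rho_i,\xi_i)$.

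The main obstacle — and the step requiring the most care — is bounding these right-hand-side terms by $C h^{2r}$ plus a controllable multiple of the dissipative quantities on the left, \emph{uniformly in how the interface cuts the mesh}. The dangerous terms are those living on cut edges and at the interface, where a naive trace inequality on a sliver element would blow up; the resolution is that the ghost penalty $J_0$ (together with $J_1$ for the time-derivative terms) provides control of $\xi$ on the fictitious parts of cut elements, yielding cut-independent trace inequalities of the form $\|\xi_i\|_{\partial(I_j\cap\Omega_i)}^2 \lesssim h^{-1}\|\xi_i\|_{\Omega_i\cup(\text{patch})}^2 + J_0(\xi_i,\xi_i)$, so that after Cauchy--Schwarz and Young's inequality with a small parameter, the $\xi$-factors are absorbed into the left-hand side and only $h^{2r+1}\cdot h^{-1} = h^{2r}$-sized projection factors remain. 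If a Gauss--Radau projection is used, several flux terms vanish outright, simplifying this bookkeeping; if the plain $L^2$ projection is used one pays an extra half power in the trace terms, which is exactly compensated by the $h^{r+1/2}$ edge bound on $\rho$. Finally, having obtained
\begin{equation*}
\frac{d}{dt} E_\eta(\xi) \leq C h^{2r} + C\, E_\eta(\xi),
\end{equation*}
Grönwall's inequality combined with the initial bound $E_\eta(\xi)(0) \leq C h^{2r}$ (from the definition \eqref{scheme:initial} of the discrete initial data and approximation of $f$) gives $E_\eta(\xi)(t) \leq C h^{2r}$ on $[0,T]$; the triangle inequality with $\|\rho_i\|_{\Omega_i}^2 \leq C h^{2r+2}$ then yields the claimed estimate, with $C$ depending on $T$, $\eta$, $\|u_i\|_{L^\infty(H^{r+1})}$ and the stabilization/penalty parameters but not on $h$ or the cut position.
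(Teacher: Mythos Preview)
Your proposal is correct and follows essentially the same route as the paper's proof: $L^2$-projection error splitting, consistency to obtain an error equation, testing with the discrete error in the $\eta$-weighted inner product so that the interface matrix $S$ from the stability analysis is positive semi-definite, bounding the cross terms via trace/inverse inequalities and Young's inequality, and closing with Gr\"onwall. Two small points to make precise when you write it out: (i) the projection $\pi_{h,i}$ acts on functions defined on the \emph{active} mesh $\mathcal{T}_{h,i}\supsetneq\Omega_i$, so you must first apply a stable Sobolev extension $E_i:H^{r+1}(\Omega_i)\to H^{r+1}(\mathbb{R})$ before projecting (this is what makes the approximation bounds on $\rho_i$ and on $J_s(\rho_i,\rho_i)$ cut-independent); and (ii) to arrive at $\tfrac{d}{dt}E_\eta(\xi)$ you must test with the weighted function $v_h=(\xi_1,\eta\,\xi_2)$, not $(\xi_1,\xi_2)$.
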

The proof of this theorem is given in Appendix \ref{proofoferror}.
\subsubsection{The acoustic system}
We consider the acoustic system~\cite{piraux2001new}
\begin{alignat}{2} \label{System:acoustic:non1}
&\rho u_{t}+p_{x} =0, && \quad {x \in \Omega_1\cup \Omega_2,  t>0,} \nonumber \\
&p_{t}+\rho c^{2} u_{x}=0, && \quad {x \in \Omega_1\cup \Omega_2,  t>0,} \\
&[u]_\Gamma=0, \ [p]_\Gamma=0, &&   \quad {x=x_\Gamma(t), t\geq0,} \nonumber
\end{alignat}
with a stationary interface at $x=x_\Gamma$, i.e., $x'_\Gamma(t)=0$. Here, $u(x, t)$ is the velocity, $p(x, t)$ is the pressure,  $\rho(x)$  is the density, and $c(x)$ is the sound speed. The density and the sound speed are piecewise constant
$$
(\rho, c)=\left\{\begin{array}{ll}
\left(\rho_{1}, c_{1}\right) & \text {if } x \in \Omega_1, \\
\left(\rho_2, c_2\right) & \text {if } x \in \Omega_2.
\end{array}\right.
$$
By introducing $q=\frac{p}{\rho c^2}$ and $m=\rho u$, denoting strain and momentum, we can write problem \eqref{System:acoustic:non1} in the  conservative form as in equation \eqref{eq:model}-\eqref{eq:interfacecond},
\begin{alignat}{2}
&U_{t}+F(U)_{x}=0, && \quad {x \in \Omega_1\cup \Omega_2,  t>0,}    \label{System:acoustic:eq} \\
&{U(x,0)=f(x),} &&  \quad   {x \in \Omega_1\cup \Omega_2,} \label{eq:initialcondacoustic} \\
&[F(U)]_\Gamma=0, &&   \quad {x=x_\Gamma, t\geq0,}   \label{System:acoustic:interface}
\end{alignat}
with
\begin{equation}\label{eq:fluxacoustic}
F(U)=\left\{\begin{array}{ll}
A_1U_1, & x \in\Omega_1, \\
A_2U_2, & x \in\Omega_2,
\end{array}\right.
\end{equation}
where
\begin{equation}\label{eq:acousticsystem}
U_i=\left(\begin{array}{l}
m_i \\
q_i
\end{array}\right)  \textrm{ and }
A_i=\left(\begin{array}{cc}
0 &\rho_i c_i^{2} \\
 \frac{1}{\rho_i} & 0
\end{array}\right), \ i=1,2.
\end{equation}
The components of $U$, $m$ and $q$, are the conserved quantities, and are referred to as the conservative variables, while $u$ and $p$ are called the primitive variables.

We begin with showing an energy estimate in the continuous setting for  the physically motivated energy
\begin{equation}\label{acoustic energy}
E(t)=\frac{1}{2}\int_{\Omega_1\cup \Omega_2} U^TBU dx,\quad
B=\begin{cases}B_1, & x \in\Omega_1, \\B_2, & x \in\Omega_2, \end{cases},\quad B_i=\left(\begin{array}{cc}
  \frac{1}{\rho_i}&0 \\
 0&\rho_i c_i^2
\end{array}\right),\, i=1,2.
\end{equation}
Since $B$ is symmetric
\begin{equation}\frac{d}{dt}E=\int_{\Omega_1}U^T_1B_1\frac{dU_1}{dt}dx+\int_{\Omega_2}U^T_2B_2\frac{dU_2}{dt}dx.\end{equation}
Introduce $U_t=-(AU)_x$ and integrate by parts to get
\begin{align}
\frac{d}{dt}E=&-\frac{1}{2}\int_{\Omega_1\cup \Omega_2} U^T (A^TB^T-BA)U_x dx
+\frac{1}{2}\left(U_1^TA_1^TB_1^TU_1|_{x_L}^{x_\Gamma}+U_2^TA_2^TB_2^TU_2|_{x_\Gamma}^{x_R}\right)
\notag\\&
\equiv -ET+IT.
\end{align}
Note that $B_iA_i$ is symmetric and therefore the bulk terms vanish, and only terms  at the physical boundary, collected in
 $ET$, and terms at the interface, collected in $IT$, remain.
We can write the contributions at the interface as
\[IT=\frac{1}{2}\left(U_{1}^TA_1^TB_1^TU_{1}-U_{2}^TA_2^TB_2^TU_{2}\right)|_\Gamma
=\frac{1}{2}\left(U_{1}^T\left((A_2^TB_1)^T-A_1^TB_2\right)U_{2}\right)|_\Gamma.\]
The last equality follows using again that $B_iA_i$ is symmetric and the interface condition (\ref{System:acoustic:interface}), i.e., $U_{1}^TA_1^T=U_{2}^TA_2^T$. Note that
\[A_2^TB_1=\left(\begin{array}{cc}
0&  \frac{\rho_1c_1^2}{\rho_2} \\
 \frac{\rho_2 c_2^2}{\rho_1}&0
\end{array}\right),\quad
A_1^TB_2=\left(\begin{array}{cc}
0&  \frac{\rho_2c_2^2}{\rho_1} \\
 \frac{\rho_1 c_1^2}{\rho_2}&0
\end{array}\right).\]
Thus,
\begin{equation}\label{relation}
(A_2^TB_1)^T=A_1^TB_2,
\end{equation}
 and therefore the interface term vanishes, $IT=0$. We conclude that the interface term gives no contribution to energy growth or decay.

Next we study the stability of the proposed discontinuous CutFEM \eqref{scheme:state:DG} for the system \eqref{System:acoustic:eq}-\eqref{eq:acousticsystem}.  Guided by the energy result for the continuous system we define
\begin{equation}\label{eq:acousticenergyh}
E_h(t)=\frac{1}{2}\left(\int_{\Omega_1\cup \Omega_2} U_h^TBU_h dx + \rM J_1(U_{h},BU_h)\right),
\end{equation}
and take $v_h=BU_h$ in \eqref{scheme:state:DG} and get
\begin{align}\label{DG_energy}
\frac{d}{dt}E_h &= ((U_{h})_t,BU_h)_{\Omega_1\cup\Omega_2}
+ \rM J_1((U_{h})_t,BU_h) \notag\\
&=\int_{\Omega_1\cup\Omega_2}U_h^T B A (U_{h})_x \ dx
+\sum_{i=1}^{2}\sum_{e\in\me_{h,i}} \widehat{F}_e(U_h)^T[BU_h]_e \notag\\
&+([F(U_h)^TBU_h]_\Gamma+[F(U_h)^T]_\Gamma[\lambda BU_h]_\Gamma)
-\rA J_0(U_h,BU_h).
\end{align}
The bulk term can be integrated by parts
and therefore only interface and edge terms remain in the right hand side of (\ref{DG_energy}). Contribution from element edges when  the Lax-Friedrichs flux is used  are well-known, and therefore we only analyze the terms from the interface at $x_\Gamma$,
\begin{align}
IT = -&(F_1(\mathbf{U}_{h,1})+\lambda_1[F(\mathbf{U}_h)]_\Gamma)^TB_1\mathbf{U}_{h,1}
+(F_2(\mathbf{U}_{h,2})+\lambda_2[F(\mathbf{U}_h)]_\Gamma)^TB_2\mathbf{U}_{h,2} \nonumber \\
+&\frac{1}{2}\mathbf{U}_{h,1}^TA_1^TB_1\mathbf{U}_{h,1}-\frac{1}{2} \mathbf{U}_{h,2}^TA_2^TB_2\mathbf{U}_{h,2}
\nonumber \\
=&-(\frac{1}{2}-\lambda_1)\mathbf{U}_{h,1}^TA_1^TB_1\mathbf{U}_{h,1}+
(\frac{1}{2}+\lambda_2)\mathbf{U}_{h,2}^TA_2^TB_2\mathbf{U}_{h,2}
\nonumber \\
-&\mathbf{U}_{h,2}^T(\lambda_1A_2^TB_1+\lambda_2(A_1^TB_2)^T)\mathbf{U}_{h,1}.
\label{system_IT}
\end{align}
Here $\mathbf{U}_{h,1},\mathbf{U}_{h,2}$ denote the numerical solution values at the interface $x_\Gamma$.
We can write
$$IT=- \left(\begin{array}{c}
\mathbf{U}_{h,1} \\
\mathbf{U}_{h,2}
\end{array}\right)^T
\mathbf{S}\left(\begin{array}{c}
\mathbf{U}_{h,1}\\
\mathbf{U}_{h,2}
\end{array}\right),
\textrm{ with }
\mathbf{S}=\left(\begin{array}{cc}
(\frac{1}{2}-\lambda_1)A_1^TB_1&\frac{\lambda_1+\lambda_2}{2}A_1^TB_2\\
\frac{\lambda_1+\lambda_2}{2}A_2^TB_1&-(\frac{1}{2}+\lambda_2)A_2^TB_2
\end{array}\right).
$$
Note that by (\ref{relation}) the matrix $\mathbf{S}$ is symmetric.
To ensure that the interface terms do not cause energy growth, $\mathbf{S}$ needs to be positive semi-definite. The only possible choice for the penalty parameters that also satisfies the conservation condition \eqref{eq:conservcondlamb} is
$$\lambda_1=\frac{1}{2},\quad \lambda_2=-\frac{1}{2},$$
which implies $\mathbf{S}=0$ and correspondingly $IT=0$. This proves the following theorem.

\begin{theorem}\label{thm:stabilityacoustic}
Consider the semi-discrete discontinuous cut finite element method \eqref{scheme:state:DG} for the acoustic system \eqref{System:acoustic:eq} in conservative form, with penalty parameters
$$\lambda_1=\frac{1}{2},\quad \lambda_2=-\frac{1}{2}.$$
The interface treatment is conservative, and does not contribute to temporal
 growth or decay of the energy \eqref{eq:acousticenergyh}.
\end{theorem}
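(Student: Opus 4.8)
The plan is to verify both assertions directly for the prescribed penalties $\lambda_1=\tfrac12$, $\lambda_2=-\tfrac12$, rather than to characterize which penalties render the matrix $\mathbf S$ positive semi-definite. Each assertion reduces to a substitution into an object already assembled in the excerpt: the conservation condition \eqref{eq:conservcondlamb} for the first, and the interface energy contribution $IT$ of \eqref{system_IT} for the second.

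For conservation I would repeat, now for the system, the computation that produced \eqref{eq:conservcondlamb}. Testing the weak form \eqref{scheme:state:DG} against each constant basis field makes the volume derivative terms, the stabilization terms $J_0$ and $J_1$, and the interior element-edge jumps all drop out, leaving only the physical boundary fluxes together with an interface contribution proportional to $(\lambda_2-\lambda_1+1)[F(U_h)]_\Gamma$. Thus $\frac{d}{dt}\int_\Omega U_h\,dx$ equals the net boundary flux exactly when $\lambda_2-\lambda_1+1=0$; inserting $\lambda_1=\tfrac12$, $\lambda_2=-\tfrac12$ gives $-\tfrac12-\tfrac12+1=0$, so the interface treatment is conservative.

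For the energy I would invoke the identity built up in \eqref{DG_energy}--\eqref{system_IT}. After the bulk term $\int_{\Omega_1\cup\Omega_2}U_h^T BA (U_h)_x\,dx$ is integrated by parts and the symmetry of $B_iA_i$ is used to cancel the volume contribution (the discrete analogue of the cancellation in the continuous estimate), and after the element-edge terms are set aside as the standard Lax-Friedrichs contributions, the entire interface contribution to $\frac{d}{dt}E_h$ is the quadratic form $IT=-\,\mathbf U^T\mathbf S\,\mathbf U$ of \eqref{system_IT}, where $\mathbf U$ stacks the interface values $\mathbf U_{h,1}$ and $\mathbf U_{h,2}$. Substituting the prescribed penalties, the diagonal blocks of $\mathbf S$ carry the factors $\tfrac12-\lambda_1=0$ and $\tfrac12+\lambda_2=0$, while the off-diagonal blocks carry $\tfrac{\lambda_1+\lambda_2}{2}=0$; every block therefore vanishes, so $\mathbf S=0$ and $IT=0$. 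Hence the interface produces neither growth nor decay of $E_h$, which is the second assertion.

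The only substantive step is the one the excerpt has already carried out before the statement: assembling $IT$ into the symmetric block form of \eqref{system_IT}, which uses the relation \eqref{relation}, $(A_2^T B_1)^T=A_1^T B_2$, to identify the two off-diagonal blocks as transposes and to write $\mathbf S$ symmetrically. Granting that bookkeeping, the theorem is a pure verification: no positivity analysis of $\mathbf S$ is required, since the chosen penalties make $\mathbf S$ identically zero. I therefore anticipate no real obstacle beyond checking that the constant-test-function argument for conservation transfers componentwise from the scalar derivation to the system.
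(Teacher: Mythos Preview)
Your proposal is correct and follows essentially the same route as the paper: the paper assembles $IT=-\mathbf U^T\mathbf S\,\mathbf U$ via \eqref{DG_energy}--\eqref{system_IT} using the symmetry relation \eqref{relation}, then observes that with $\lambda_1=\tfrac12$, $\lambda_2=-\tfrac12$ each block of $\mathbf S$ vanishes so $IT=0$, and separately checks $\lambda_2-\lambda_1+1=0$ for conservation. The only difference is cosmetic: the paper remarks in passing that this is the \emph{only} choice compatible with both conservation and semi-definiteness of $\mathbf S$, whereas you (correctly, for the theorem as stated) skip that uniqueness claim and go straight to the substitution.
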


\subsection{Numerical results}
We use the proposed discontinuous CutFEM \eqref{scheme:state:DG} to solve the scalar advection equation \eqref{eq:linear:state} and the acoustic system \eqref{System:acoustic:eq}.  The background mesh on the domain $\Omega$ is uniform with mesh size $h=|\Omega|/N$, where $N$ is the number of elements.  In the numerical simulations, we use $\rM=0.25$ and $\rA=0.75$.

To discretize in time we use the explicit third order TVD Runge-Kutta method \cite{gottlieb2001strong} when the polynomial degree in space $r\leq2$ (see \eqref{eq:BGspace}), i.e.,
\begin{align}
u^{n,1} &=u^{n}+\Delta t  L\left(u^{n},g_h^n,t^{n}\right), \\
u^{n,2} &=\frac{3}{4} u^{n}+\frac{1}{4} u^{n,1}+\frac{1}{4} \Delta t  L\left(u^{n,1},g_h^{n,1},t^{n}+\Delta t  \right), \\
u^{n+1} &=\frac{1}{3} u^{n}+\frac{2}{3} u^{n,2}+\frac{2}{3} \Delta t  L\left(u^{n,2}, g_h^{n,2}, t^{n}+\frac{1}{2} \Delta t \right).
\end{align}
Here $u_t=L(u,g,t)$ is the semi-discrete problem, $\Delta t =t^{n+1}-t^n$ is the time step and $g_h^n, g_h^{n,1},g_h^{n,2}$ denote the approximations of the boundary condition $g(t)$ at different time stages. The inflow information $g$ is imposed via Taylor expansion of $g(t)$ to avoid order reduction with $g_h^n=g(t^n)$, $g_h^{n,1}=g(t^n)+\Delta t  g'(t^n)$ and $g_h^{n,2}=g(t^n)+\frac{\Delta t }{2} g'(t^n) +\frac{(\Delta t )^2}{4}g''(t^n)$,
 for details see~\cite{zhang2011third}. For the approximation of boundary conditions in systems see \cite{ShuDG3}.
When $r=3$ we instead use the fourth order five stages Runge-Kutta method \cite{gottlieb2001strong}. The time step is taken to be $\Delta t =\frac{Ch}{\max_\Omega\{|F'(u_h)|\}}$ with given Courant number $C=0.3,0.2,0.1$, for $r=1,2,3$, respectively.
Here, $\max_\Omega\{|F'(u_h)|\}$ represents the largest absolute value of  the eigenvalues of the Jacobian $ \frac{\partial F( u_h)}{\partial u_h}$ on the domain $\Omega$.
Next, we will demonstrate the accuracy and conservation of the proposed method by solving test problems.  We will measure the error in the following norms
 \begin{align*}
 &||u-u_h||^p_{\Omega}=\sum_{i=1}^2 ||u_i-u_{h,i}||^p_{\Omega_i}, \quad p=1,2,\\
 & ||u-u_h||_{L^\infty(\Omega_1\cup\Omega_2)}= \max\{\max_{x\in \Omega_1}\{|u_1(x,t)-u_h(x,t)|\},\max_{x\in \Omega_2}\{|u_2(x,t)-u_h(x,t)|\}\}.
 \end{align*}
 Here $||\cdot||^p_{\Omega_i}=\int_{\Omega_i} |\cdot|^pdx$ denotes the usual $L^p$-norms in domain $\Omega_i$.
The error in the $L^\infty$-norm is measured as the maximum value of $|u-u_h|$ on the quadrature points of each element, the end points of each elements and the interface point. We note that the quadrature points of the integration in the cut element in each domain $\Omega_i$ are taken over the part of the background element, which is in the domain $\Omega_i$.

\subsubsection{Scalar problem: Accuracy }\label{sec:stationary:scalar:accuracy}
We consider problem \eqref{eq:linear:state} with $x_L=-1$, $x_R=1$, a stationary interface at  $x_\Gamma=10^{-4}$, parameters  $a_1=2,a_2=1$, initial condition
\begin{align}\label{eq:scalar:initial}
f(x)=\left\{\begin{array}{ll}
{\sin(2\pi x),} & {x\in [-1,x_\Gamma],} \\
{2\sin(4\pi (x-x_\Gamma/2)),} & {x\in[x_\Gamma,1],}
\end{array}\right.
\end{align}
and inflow boundary condition $u(x_L,t)=g(t)=\sin(2\pi(-1-2t))$. Note that $u_2(x_\Gamma,0)=2u_1(x_\Gamma,0)$ so the initial condition satisfies the interface condition \eqref{eq:interfacecond}. The exact solution to this problem is
\begin{align}
u(x,t)=
\left\{\begin{array}{ll}
{\sin(2\pi(x-2t)),} & {x\in [-1,x_\Gamma],} \\
{2\sin(4\pi(x-t-x_\Gamma/2)),} & {x\in[x_\Gamma,1].}
\end{array}\right.
\end{align}

We solve  up to time $t=1$ with different mesh sizes and polynomial spaces $r=1, 2, 3$ (i.e., $P^1,P^2,P^3$).
We choose the penalty parameters to be $\lambda_1=0.1$ and $\lambda_2=\lambda_1-1$.
Errors in the $L^2$- and $L^\infty$-norm and the corresponding convergence orders using the proposed  method are shown in the left part of Table \ref{table:interface:accuracy}. For comparison, we also show results using the standard DG method in the right part of Table \ref{table:interface:accuracy}.  For the standard DG method we use the same numerical fluxes but generate the mesh so that the interface $x_\Gamma$ is located on an element edge. We do this by using uniform meshes with $N=N_1+N_2$ elements and mesh size $h_1=|\Omega_1|/N_1,h_2=|\Omega_2|/N_2$ for $\Omega_1$ and $\Omega_2$, respectively. Here $N_1,N_2$ are chosen such that $h_1,h_2$ are close to the mesh size $h=|\Omega|/N$  we use in the CutFEM.
 From the numerical results in Table \ref{table:interface:accuracy}, we observe that the proposed method and the standard DG method have optimal order of accuracy and that the magnitude of the errors in both $L^2$- and $L^\infty$-norm are similar for the two methods.  We have also tested other choices for the parameters $a_1$ and $a_2$ and observed similar results as shown here.
\begin{table}[!htbp]
\caption{\label{table:interface:accuracy} {
Errors and orders of accuracy  at $t=1$ for the problem in Sect.  \ref{sec:stationary:scalar:accuracy} solved by the
 proposed method on a uniform background mesh and by a standard DG method on a quasi-uniform mesh fitted to the interface. Polynomial degrees $1,2,3$ and $N$ elements are used.
} }
\begin{small}
\begin{tabular}{c|cccc|cccc}
\noalign{\smallskip}\hline
 N&$L^2 $ error & order &$L^{\infty} $ error & order &$L^2 $ error & order &$L^{\infty} $ error & order \\\hline
&\multicolumn{4}{c|}{Discontinuous CutFEM method}&\multicolumn{4}{c}{Standard DG method}\\\hline
20	&	2,64E-01	&	-	&	5,60E-01	&	-	&	2,64E-01	&	-	&	5,61E-01	&	-	\\
40	&	4,92E-02	&	2,42	&	1,13E-01	&	2,31	&	4,92E-02	&	2,42	&	1,13E-01	&	2,32	\\
80	&	9,74E-03	&	2,34	&	3,07E-02	&	1,88	&	9,75E-03	&	2,34	&	3,07E-02	&	1,87	\\
160	&	2,22E-03	&	2,13	&	7,96E-03	&	1,95	&	2,22E-03	&	2,13	&	7,96E-03	&	1,95	\\
320	&	5,40E-04	&	2,04	&	2,02E-03	&	1,98	&	5,40E-04	&	2,04	&	2,02E-03	&	1,98	\\
\noalign{\smallskip}\hline\noalign{\smallskip}
20	&	1,21E-02	&	-	&	6,07E-02	&	-	&	1,21E-02	&	-	&	6,07E-02	&	-	\\
40	&	1,35E-03	&	3,16	&	7,78E-03	&	2,97	&	1,35E-03	&	3,16	&	7,78E-03	&	2,96	\\
80	&	1,66E-04	&	3,03	&	1,01E-03	&	2,94	&	1,66E-04	&	3,03	&	1,01E-03	&	2,94	\\
160	&	2,06E-05	&	3,01	&	1,29E-04	&	2,98	&	2,06E-05	&	3,01	&	1,29E-04	&	2,98	\\
320	&	2,58E-06	&	3,00	&	1,61E-05	&	2,99	&	2,58E-06	&	3,00	&	1,61E-05	&	2,99	\\
\noalign{\smallskip}\hline\noalign{\smallskip}
20	&	7,14E-04	&	-	&	5,65E-03	&	-	&	7,14E-04	&	-	&	5,65E-03	&	-	\\
40	&	4,41E-05	&	4,02	&	3,68E-04	&	3,94	&	4,41E-05	&	4,02	&	3,68E-04	&	3,94	\\
80	&	2,75E-06	&	4,00	&	2,29E-05	&	4,00	&	2,75E-06	&	4,00	&	2,29E-05	&	4,00	\\
160	&	1,72E-07	&	4,00	&	1,45E-06	&	3,99	&	1,72E-07	&	4,00	&	1,44E-06	&	3,99	\\
320	&	1,07E-08	&	4,00	&	9,05E-08	&	4,00	&	1,07E-08	&	4,00	&	9,05E-08	&	4,00	\\
\noalign{\smallskip}\hline
\end{tabular}
\end{small}
\end{table}
\subsubsection{Scalar problem: Conservation}\label{sec:stationary:scalar:conservation}
With this example we test how well quantities are conserved,  and how errors and the condition number of the mass matrix depend on the cut size. This example is used in \cite{la2016well} and is similar to Example 1.1, but with less smooth initial data. The initial condition and inflow boundary conditions are $f(x)=0$ and \(u(x_L,t)=g(t)=\sin (4 \pi(-1+3 t))\), respectively. The inflow condition is weakly imposed by the upwind flux information at the inflow boundary.  The exact solution is
\begin{align}
u(x,t)=
\left\{\begin{array}{ll}
{g(t-(x-x_L)/2),} & { t\geq\frac{x-x_L}{2}, \ x\in [-1,x_\Gamma],} \\
{2g(t-x+(x_\Gamma+x_L)/2),} & { t\geq x-\frac{x_\Gamma+x_L}{2},\ x\in [x_\Gamma,1],}\\
{0,} & {else.} \\
\end{array}\right.
\end{align}

We use the proposed  method with $r=2$, i.e. quadratic polynomials and  a uniform background mesh.
The interface is at $x_\Gamma=10^{-4}$. In  Fig. \ref{exe:scalar:solution}, we show the numerical solution   on the background mesh consisting of $400$ elements and the exact solution  at time $t=0.5$ and
 $t=1$.    Our results compare well with those in \cite{la2016well}.

In Fig. \ref{exe:scalar:conservation}, we show the conservation error
\begin{align}
e(t)=&\sum_{n=0}^{N_t-1}\frac{\Delta t }{6}\Big(a_1g_h^{n}-a_2u_h^n(x_R)+4\left(a_1g_h^{n,2}-a_2u_h^{n,2}(x_R)\right)\notag\\
&\quad \quad +a_1g_h^{n,1}-a_2u_h^{n,1}(x_R)\Big)-\ino (u_h^{N_t}(x)-u_h^0(x))dx.
\label{eq:em2}
\end{align}
Here, $N_t$ is the number of time steps from time $0$ to $t$.   $u_h^n, u_h^{n,1}, u_h^{n,2}$ are the approximations of the solution at  time $t^n,t^{n}+\Delta t ,t^n+\Delta t /2$, respectively. The inflow information introduced in each time step is equal  to  $\frac{\Delta t }{6}\left(g_h^{n}+4g_h^{n,2}+g_h^{n,1}\right)$, which is an approximation of the integral $\int_{t^n}^{t^{n+1}}g(t)dt$. The error $e(t)$ takes into account the approximation of the inflow boundary condition.  If instead the exact integral at the  inflow boundary is used there would be a
contribution to the conserved quantity of the  order $\Delta t^4$ corresponding to the integration error.
The conservation error $e$ (Fig. \ref{exe:scalar:conservation}, left panel) with $\lambda_1,\lambda_2$ satisfying condition \eqref{eq:conservcondlamb}, is of the order of machine epsilon and remains on the same level also for other refinements and for other polynomial degrees than $r=2$.  This shows that the interface treatment of the proposed method is conservative. We also show the conservation error when the parameters $\lambda_1=0.25$, $\lambda_2=-0.25$, which  do not satisfy the conservation condition \eqref{eq:conservcondlamb}, see the right panel of Fig. \ref{exe:scalar:conservation}. We observe that the scheme in this case is not conservative and has a large error,  which decrease with mesh refinement but still is large compared to machine error.
\begin{figure}[!bhtp]
\begin{center}
\includegraphics[width=2.3in]{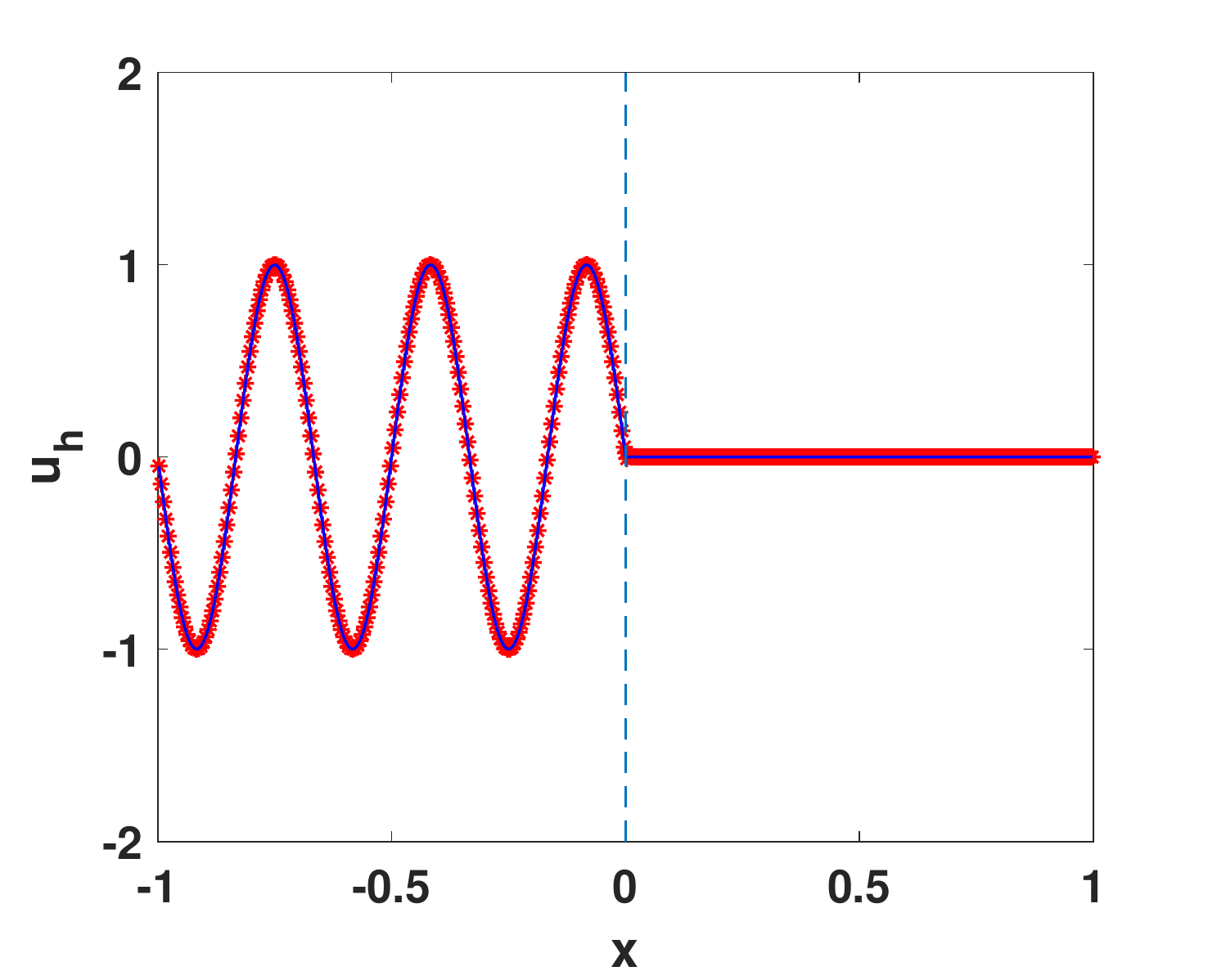}
\includegraphics[width=2.3in]{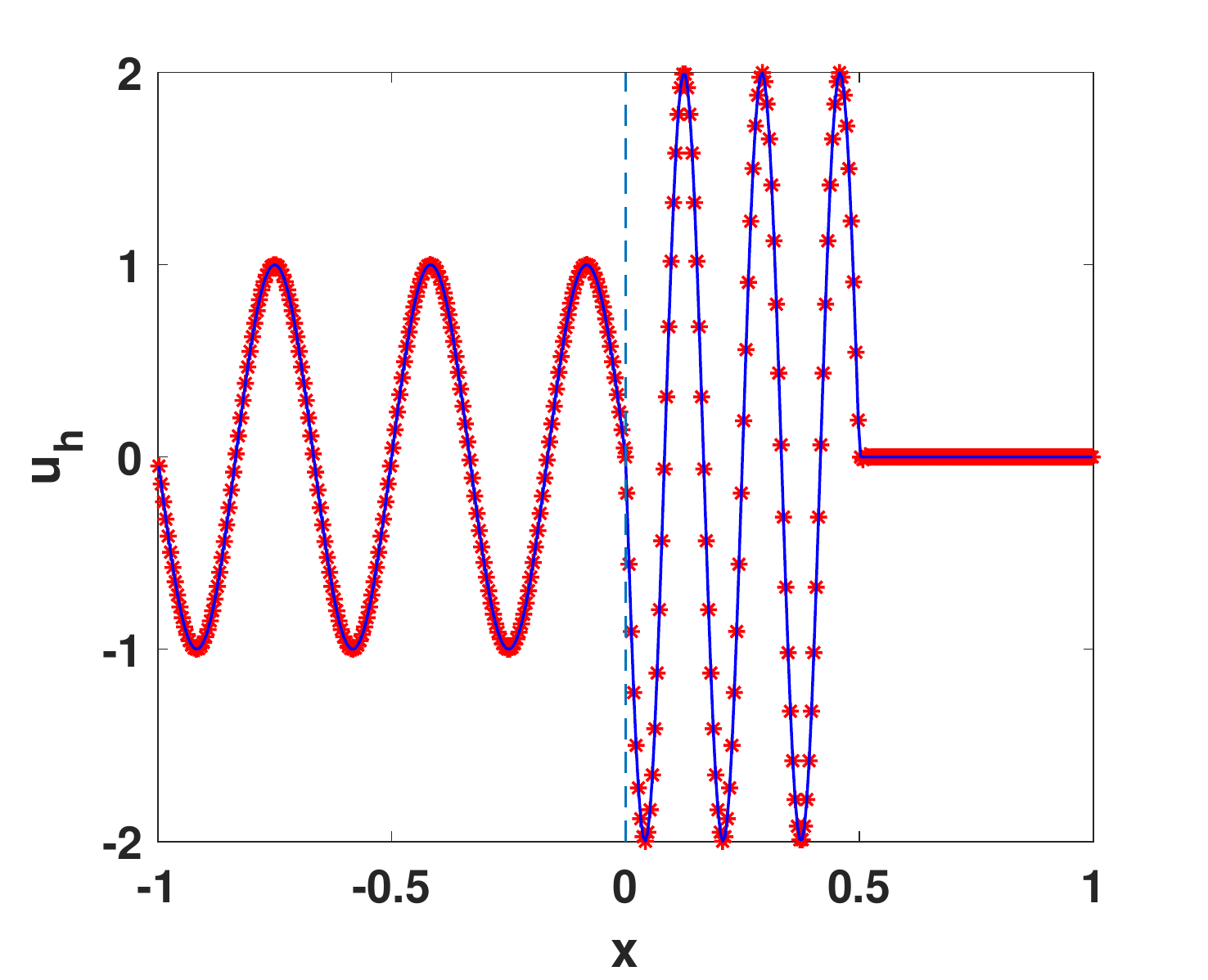}
\caption{
Solutions to the problem in \ref{sec:stationary:scalar:conservation} with zero initial data using discontinuous  quadratic polynomials in space on a uniform mesh with 400 elements.
Left: $t=0.5$. Right: $t=1$. Star: numerical solution. Solid line: exact solution. The dashed line indicates the interface's position.}\label{exe:scalar:solution}
\end{center}
\end{figure}
\begin{figure}[!bhtp]
\begin{center}
\includegraphics[width=2.3in]{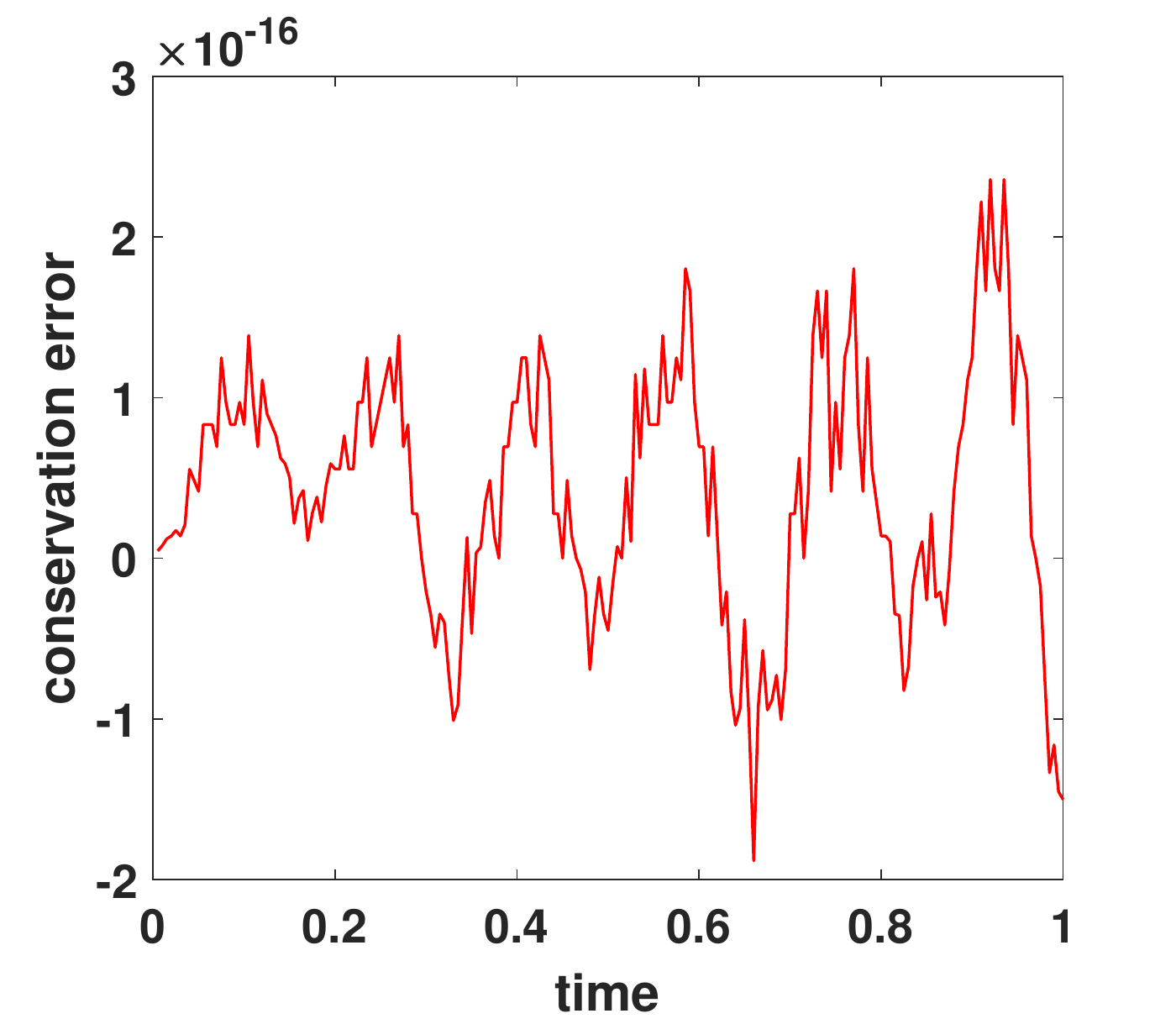}
\includegraphics[width=2.3in]{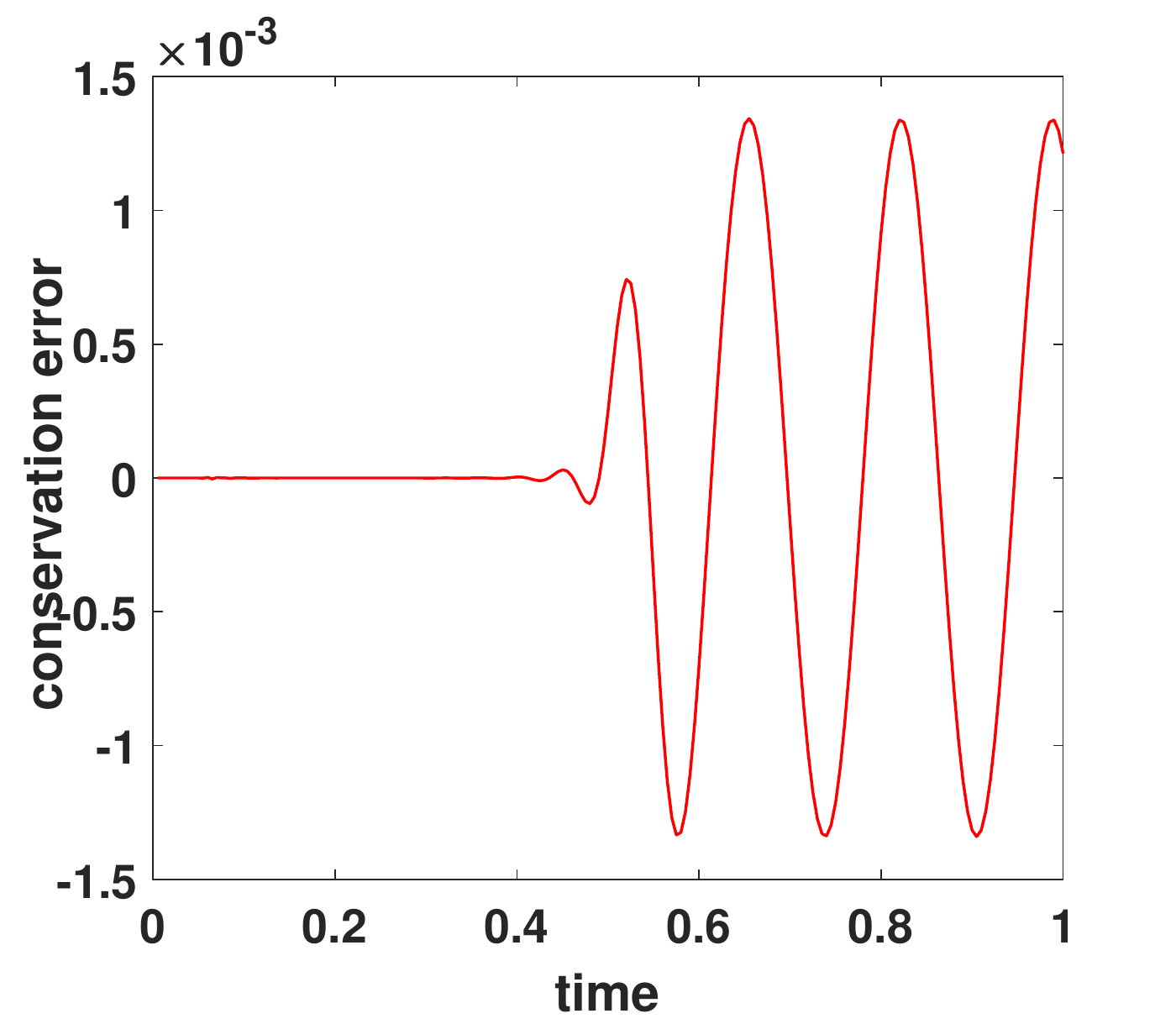}
\caption{
Conservation errors for the example in \ref{sec:stationary:scalar:conservation}. A uniform mesh with 40 elements and piecewise quadratic polynomials are used.
Left: $\lambda_1=0.1,\lambda_2=1-\lambda_1$. Right: $\lambda_1=0.25,\lambda_2=-0.25$.
}\label{exe:scalar:conservation}
\end{center}
\end{figure}

Next we investigate how the error depends on the position of the interface relative the background mesh.  We use  linear polynomials in space, $h=1/200$  and  $x_\Gamma=\alpha h$, with $\alpha$ varying between 0 and 1. Note that there is a mesh node at $x=0 $ so that $\alpha$ is the relative cut size.  In Fig. \ref{exe:interface:erroralpha}, we plot the $L^1-,L^2$-and $L^\infty$-errors at t=1 as a function of $\alpha$.
We have scaled the $L^\infty$-error, by dividing the error with $10$, to show all errors in one figure. We observe that the $L^1-$ and $L^2-$errors are independent of how the interface cuts the background mesh and the $L^\infty$-error does not change much either.
 \begin{figure}[!bhtp]
 \centering
 {\includegraphics[width=2.8in]{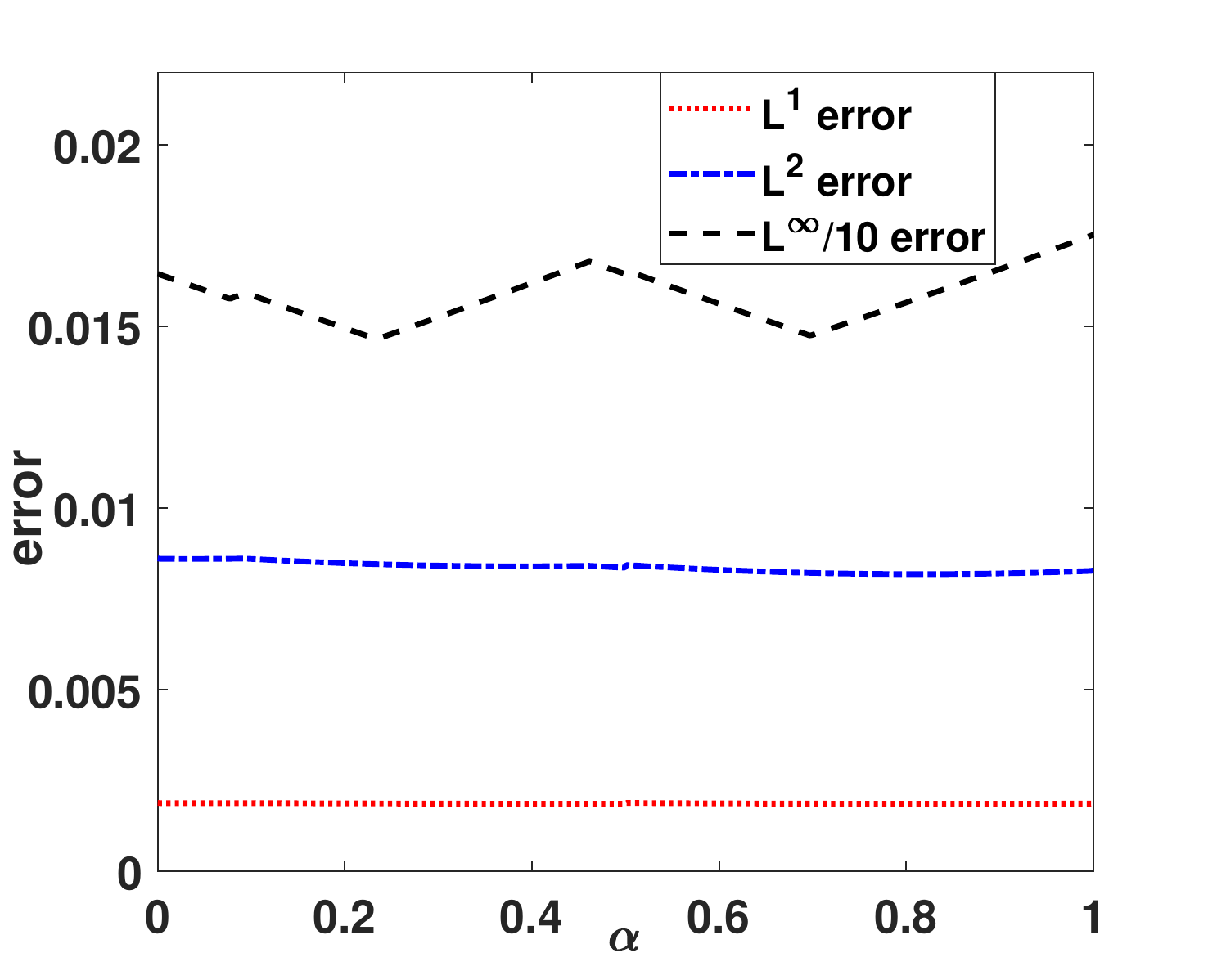}}
\caption{Errors in the numerical solution of the problem in \ref{sec:stationary:scalar:conservation} at $t$=1 as a function of the relative cut size.
A uniform mesh with 400 elements and piecewise linear polynomials are used.
The  interface is at  $x_\Gamma=\alpha h$, and we use 400 equally distributed $\alpha$'s in $[0,1]$.
} \label{exe:interface:erroralpha}
\end{figure}

We have also checked the conditioning of the mass matrix in the same setting as above, for several element types.  The condition  numbers as a function of the relative cut size for piecewise  linear, quadratic and cubic polynomials are shown in  Fig. \ref{exe:interface:condalpha}. We see that the stabilization  controls the condition number so that it stays bounded and on the same level, independently of how the interface cuts the background mesh.
  \begin{figure}[!bhtp]
  \centering
  \subfigure[$P^1$]{
 \includegraphics[width=1.5in]{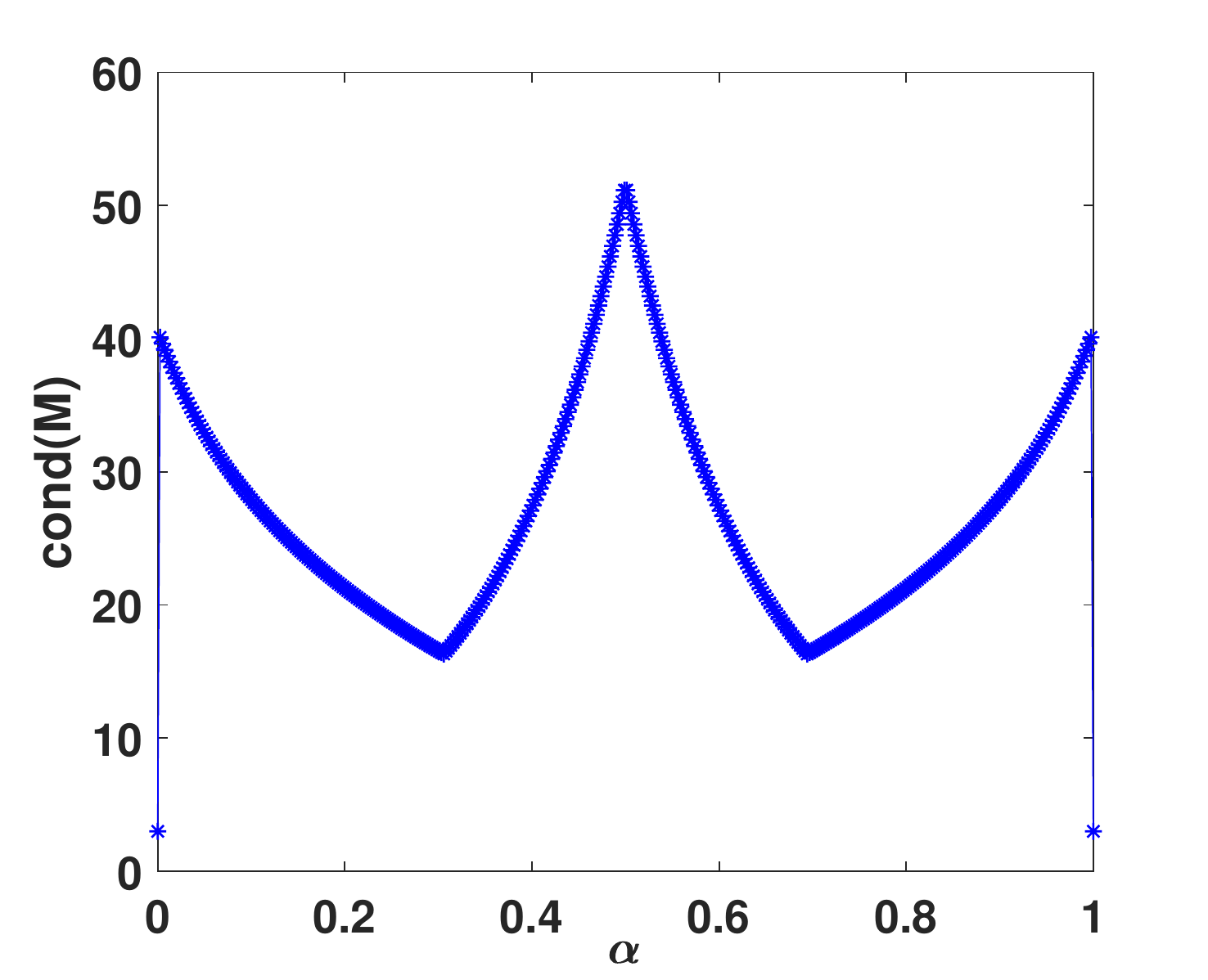}}
  \subfigure[ $P^2$]{
 \includegraphics[width=1.5in]{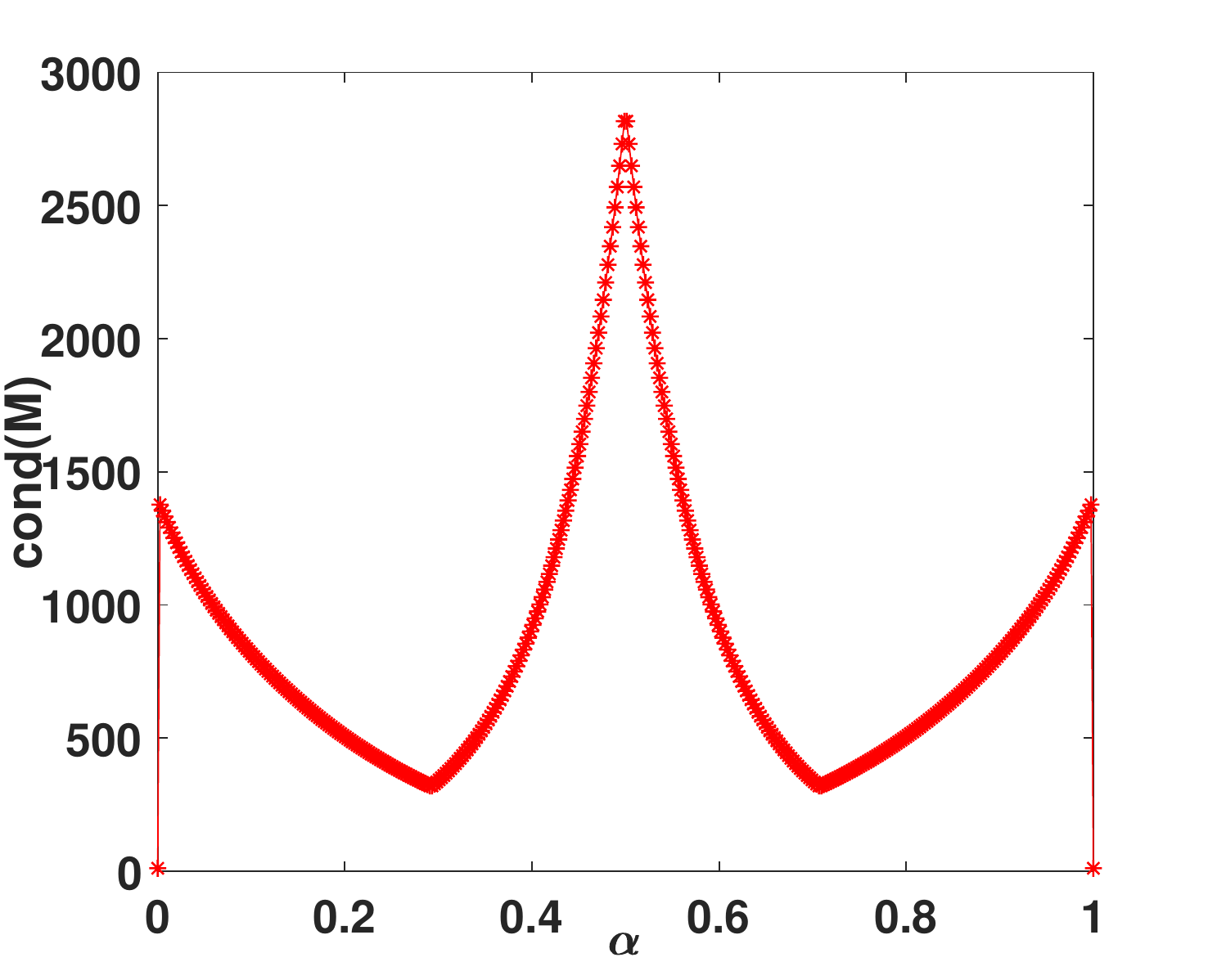}}
  \subfigure[ $P^3$]{
 \includegraphics[width=1.5in]{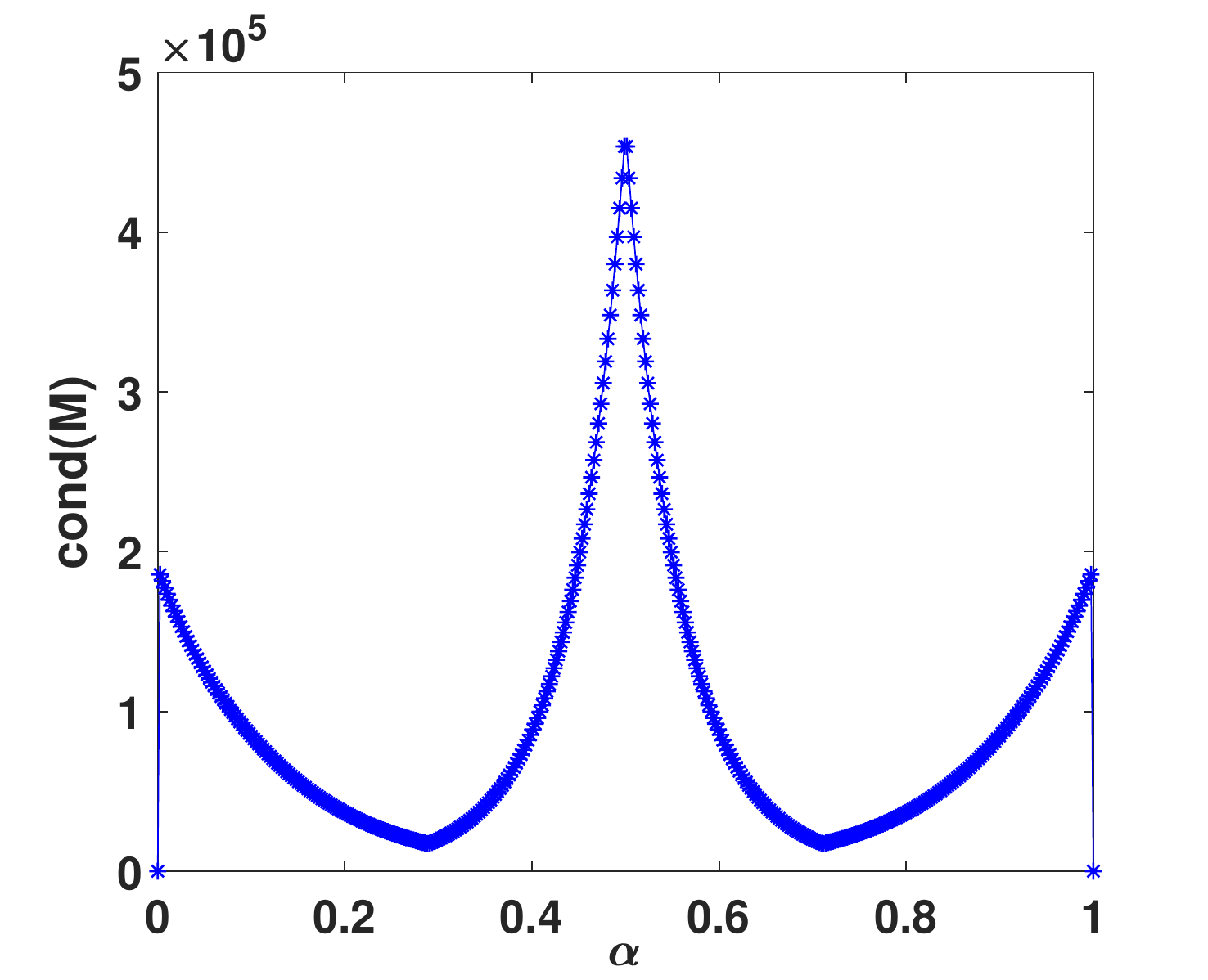}}
\caption{The condition number of the mass matrix as a function of the relative cut size for the problem in \ref{sec:stationary:scalar:conservation} at $t=1$. A uniform mesh with 400 elements and $P^1,P^2,P^3$ polynomials are used. The interface is at  $x_\Gamma=\alpha h$, and we have considered 400 equally distributed $\alpha$'s in $[0,1]$.
}
  \label{exe:interface:condalpha}
\end{figure}

\subsubsection{The acoustic system}\label{sec:stationary:acoustic}
We now use the proposed CutFEM method \eqref{scheme:state:DG} with penalty parameters $\lambda_1=0.5$ and $\lambda_2=-0.5$ to solve the acoustic system in conservation form \eqref{System:acoustic:eq}.  We consider the same example as in~\cite{piraux2001new}. The domain is $\Omega=[0,300]$, a long fluid medium with an interface at $x_\Gamma=96.3$, and with physical parameters
$$
(\rho(x), c(x))=\left\{\begin{array}{ll}
\rho_{1}=1000 \mathrm{kg} / \mathrm{m}^{3}, & c_{1}=1500 \mathrm{m} / \mathrm{s}, \,  \text { if } x \leq x_\Gamma, \\
\rho_{2}=1200 \mathrm{kg} / \mathrm{m}^{3}, & c_{2}=2800 \mathrm{m} / \mathrm{s},  \text { if } x \geq x_\Gamma.
\end{array}\right.
$$
The initial condition is
$$
U(x,0)=\mathbf{f}_{0}(x)=-f(\xi)\left(\begin{array}{c}
\frac{1}{c_{0}} \\
\rho_{0}
\end{array}\right).
$$
Here, $f_{0}(\xi)$ is a spatially bounded sinusoidal function
$$
f_{0}(\xi)=\left\{\begin{array}{ll}
\sin \left(\omega_{c} \xi\right)-\frac{21}{32} \sin \left(2 \omega_{c} \xi\right)+\frac{63}{768} \sin \left(4 \omega_{c} \xi\right)-\frac{1}{512} \sin \left(8 \omega_{c} \xi\right), & \text { if } 0<\xi<\frac{1}{f_{c}}, \\
0 \text { else, with } \xi=t_{0}-\frac{x}{c},
\end{array}\right.
$$
where the central frequency $f_{c}=50 \mathrm{Hz}$, $\omega_c=2\pi f_c$, and $t_{0}=51 \mathrm{ms}$.
  The initial values of the conservative variables $m$ and $q$ are shown to the left in Fig. \ref{exe:system:solution}.  When the wave reaches the interface, the acoustic wave is transmitted and reflected.

We simulate this problem up to time $t=39 \mathrm{ms}$ using different mesh sizes and polynomial spaces. We note that the waves do not reach the boundaries up to time $t=39 \mathrm{ms}$ thus the zero boundary condition is used in our implementation. In the computation the conservative variables are used, but in
 Table \ref{table:system:accuracy}, we give the $L^2$-and $L^\infty$-errors and the corresponding order of accuracy  for the primitive variables.
We observe that the proposed method has optimal order of accuracy also for the acoustic problem.
In Fig. \ref{exe:system:solution}, we show the initial values and the numerical solution at time $t=39ms$.
We see that our scheme can simulate this problem very well and capture the reflected wave and the transmitted wave. In Fig. \ref{exe:system:conservation}, we plot the conservation errors of $m_h$ and $q_h$ with respect to time $t$.
The conservation errors have small oscillations after the wave arrives at the interface. But all errors are of the order of machine epsilon  which demonstrates that the proposed method is conservative also for the acoustic problem. 

\begin{table}[!bhtp]
\caption{\label{table:system:accuracy} {Errors  and orders of accuracy  at $t=39$ms for the acoustic problem in \ref{sec:stationary:acoustic}.
Uniform background meshes with $N$ elements and piecewise polynomials of orders 1,2,3 (top, middle, bottom) are used.}
 }
\begin{small}
\begin{tabular}{c|cccc|cccc}
\noalign{\smallskip}\hline
N&$L^2 $ error & order &$L^{\infty} $ error & order &$L^2 $ error & order &$L^{\infty} $ error & order \\
\noalign{\smallskip}\hline
&\multicolumn{4}{c|}{$p_h$}&\multicolumn{4}{c}{$u_h$}\\
\noalign{\smallskip}\hline\noalign{\smallskip}
200	&	2,57E+01	&	1,99	&	1,26E+02	&	1,90	&	8,68E-06	&	1,98	&	3,76E-05	&	1,90	\\
400	&	4,33E+00	&	2,57	&	2,20E+01	&	2,52	&	1,49E-06	&	2,54	&	6,55E-06	&	2,52	\\
800	&	6,00E-01	&	2,85	&	3,40E+00	&	2,69	&	2,13E-07	&	2,81	&	1,06E-06	&	2,62	\\
1600	&	8,71E-02	&	2,78	&	5,08E-01	&	2,74	&	3,38E-08	&	2,66	&	1,96E-07	&	2,44	\\
3200	&	1,59E-02	&	2,46	&	9,03E-02	&	2,49	&	6,83E-09	&	2,31	&	6,02E-08	&	1,70	\\
6400	&	3,60E-03	&	2,14	&	2,48E-02	&	1,87	&	1,63E-09	&	2,07	&	1,65E-08	&	1,87	\\
\noalign{\smallskip}\hline\noalign{\smallskip}
200	&	8,29E-01	&	3,74	&	4,11E+00	&	3,52	&	2,91E-07	&	3,70	&	1,35E-06	&	3,38	\\
400	&	5,73E-02	&	3,86	&	2,60E-01	&	3,99	&	2,21E-08	&	3,72	&	1,61E-07	&	3,06	\\
800	&	5,00E-03	&	3,52	&	2,94E-02	&	3,14	&	2,16E-09	&	3,35	&	1,96E-08	&	3,04	\\
1600	&	5,80E-04	&	3,11	&	3,67E-03	&	3,00	&	2,60E-10	&	3,06	&	2,45E-09	&	3,00	\\
3200	&	7,12E-05	&	3,02	&	4,60E-04	&	3,00	&	3,22E-11	&	3,01	&	3,07E-10	&	3,00	\\
6400	&	8,87E-06	&	3,01	&	5,75E-05	&	3,00	&	4,02E-12	&	3,00	&	3,84E-11	&	3,00	\\
\noalign{\smallskip}\hline\noalign{\smallskip}
200	&	2,61E-02	&	5,11	&	1,58E-01	&	4,68	&	1,14E-08	&	4,82	&	1,05E-07	&	3,68	\\
400	&	8,49E-04	&	4,94	&	9,82E-03	&	4,01	&	5,15E-10	&	4,47	&	6,55E-09	&	4,01	\\
800	&	5,03E-05	&	4,08	&	6,07E-04	&	4,02	&	3,16E-11	&	4,03	&	4,05E-10	&	4,02	\\
1600	&	3,14E-06	&	4,00	&	3,80E-05	&	4,00	&	1,97E-12	&	4,00	&	2,53E-11	&	4,00	\\
3200	&	1,97E-07	&	4,00	&	2,37E-06	&	4,00	&	1,23E-13	&	4,00	&	1,58E-12	&	4,00	\\
\noalign{\smallskip}\hline
\end{tabular}
\end{small}
\end{table}

\begin{figure}[!bhtp]
\begin{center}
\subfigure[Initial value: $m$]{
\includegraphics[width=2.2in]
{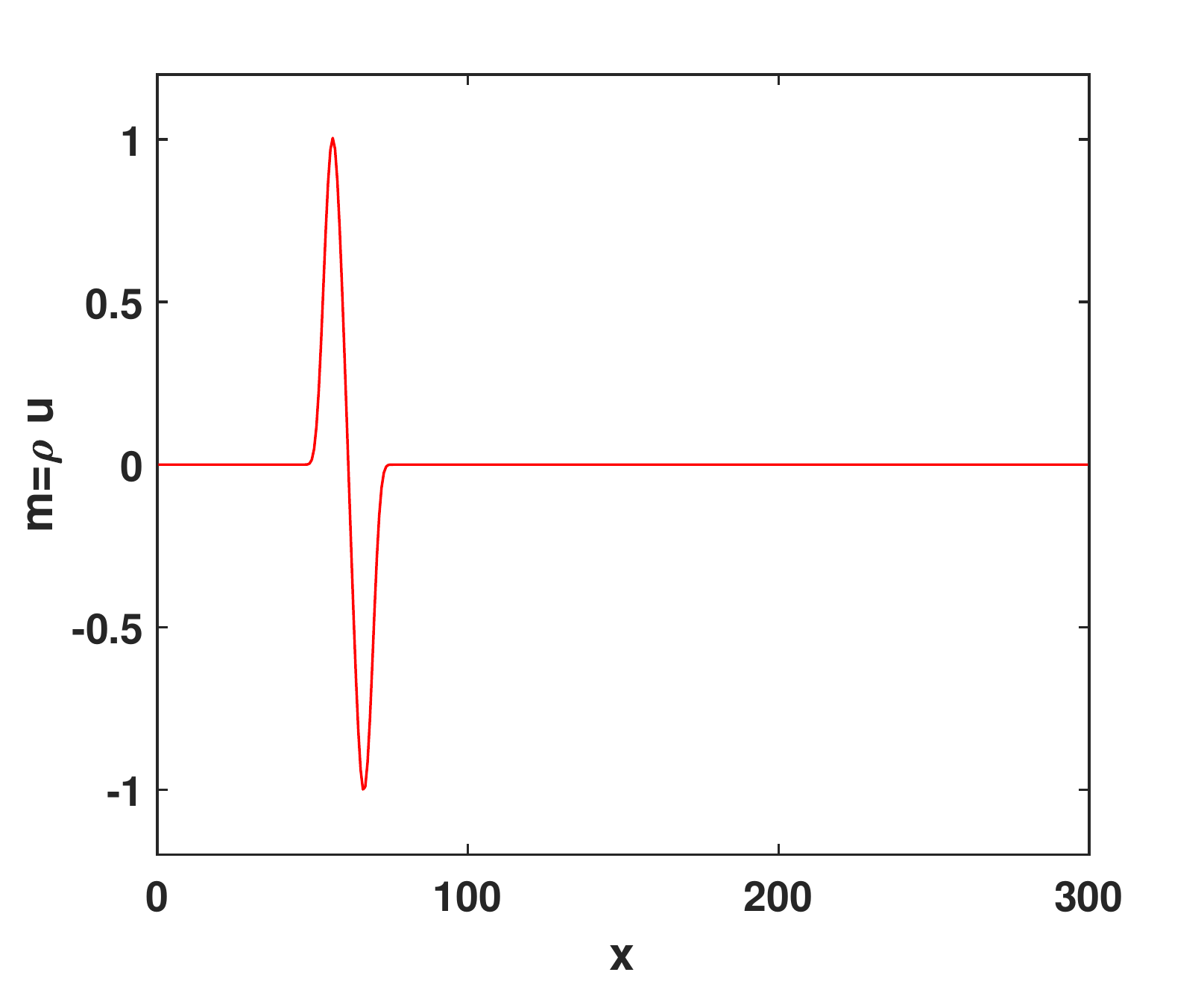}}
\subfigure[Solution: $m$, $m_h$ at $t=39\mathrm{ms}$]{
\includegraphics[width=2.15in]
{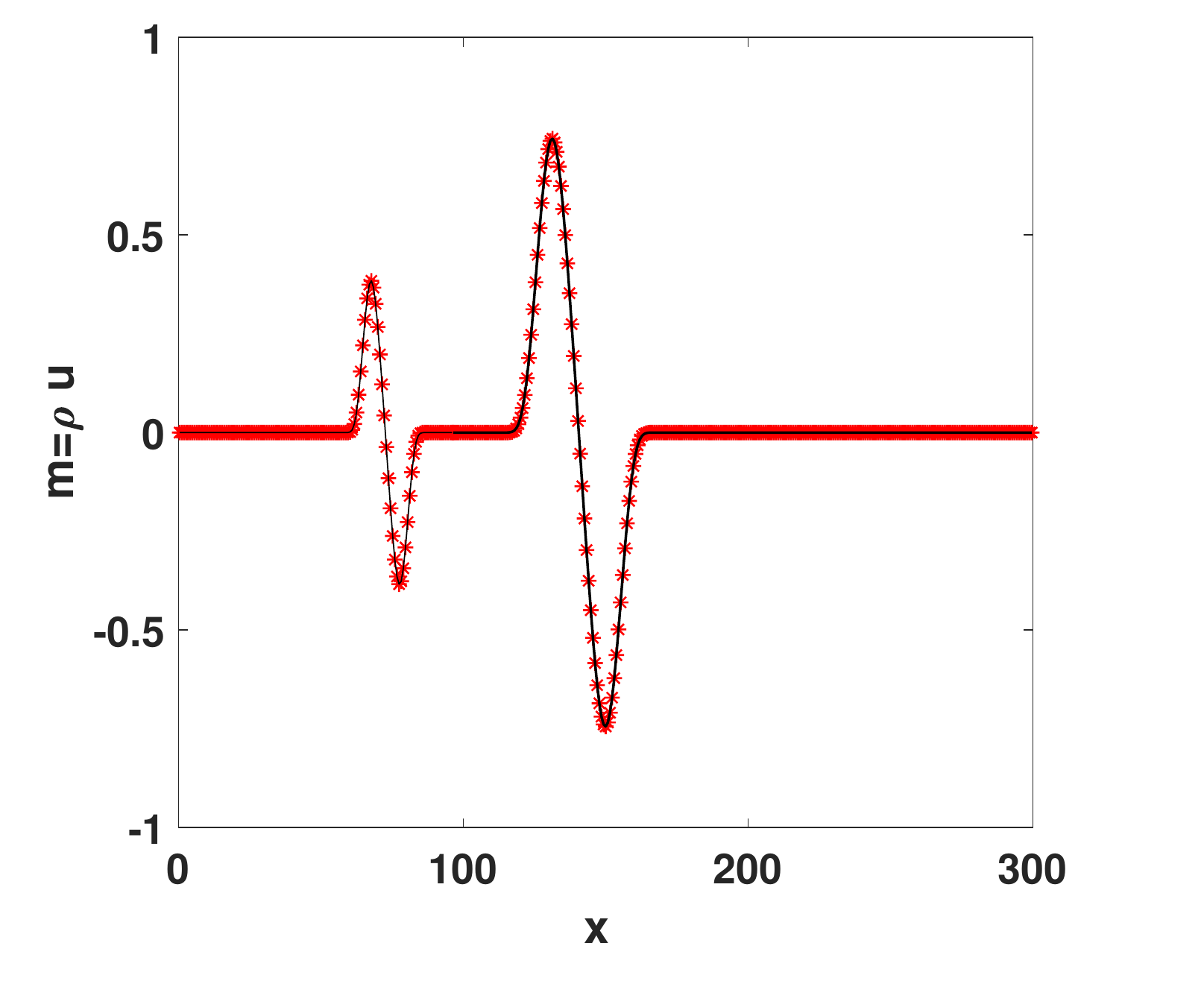}}
\subfigure[Initial value: $q$]{
\includegraphics[width=2.2in]
{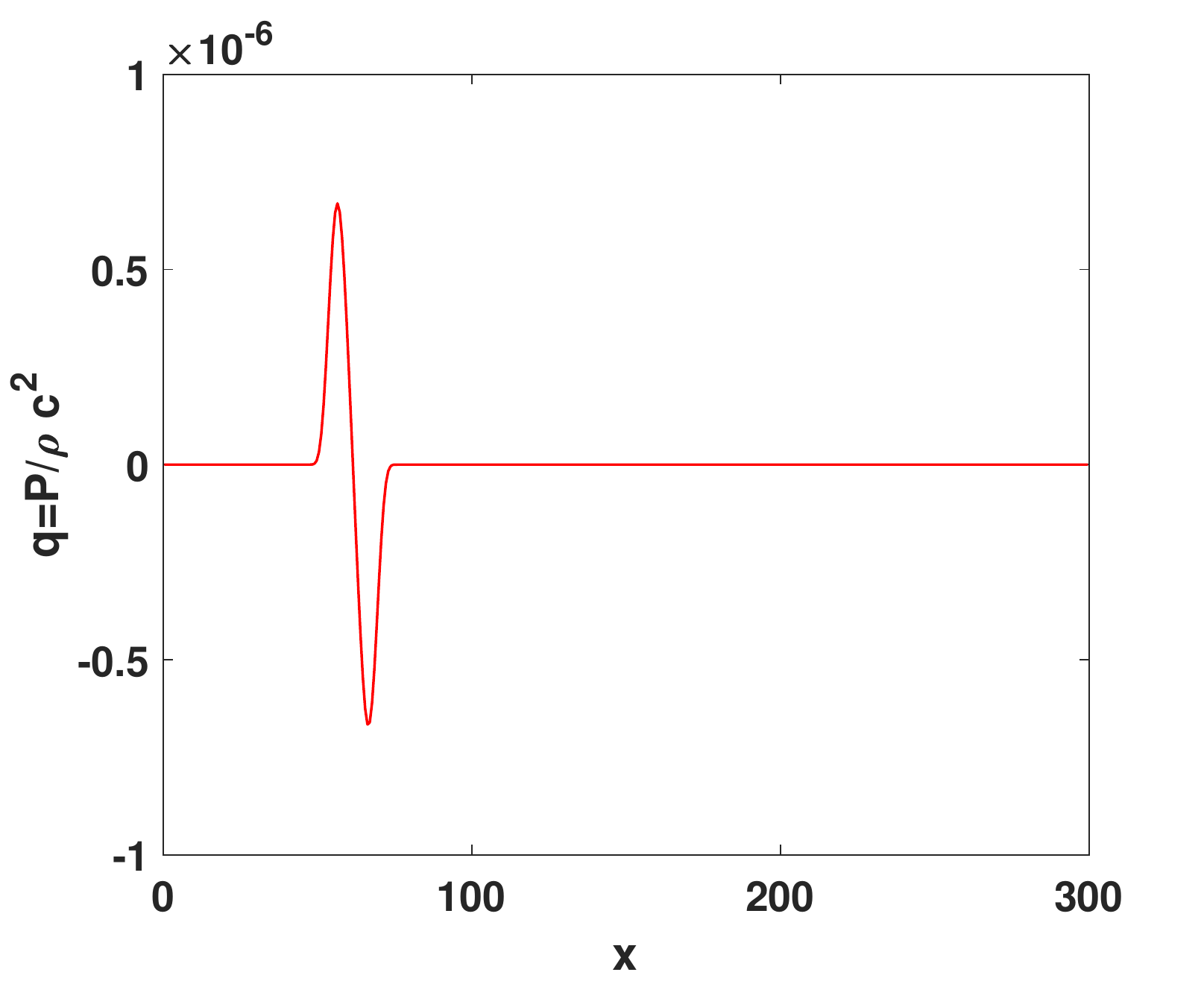}}
\subfigure[Solution: $q$, $q_h$ at $t=39 \mathrm{ms}$]{
\includegraphics[width=2.2in]
{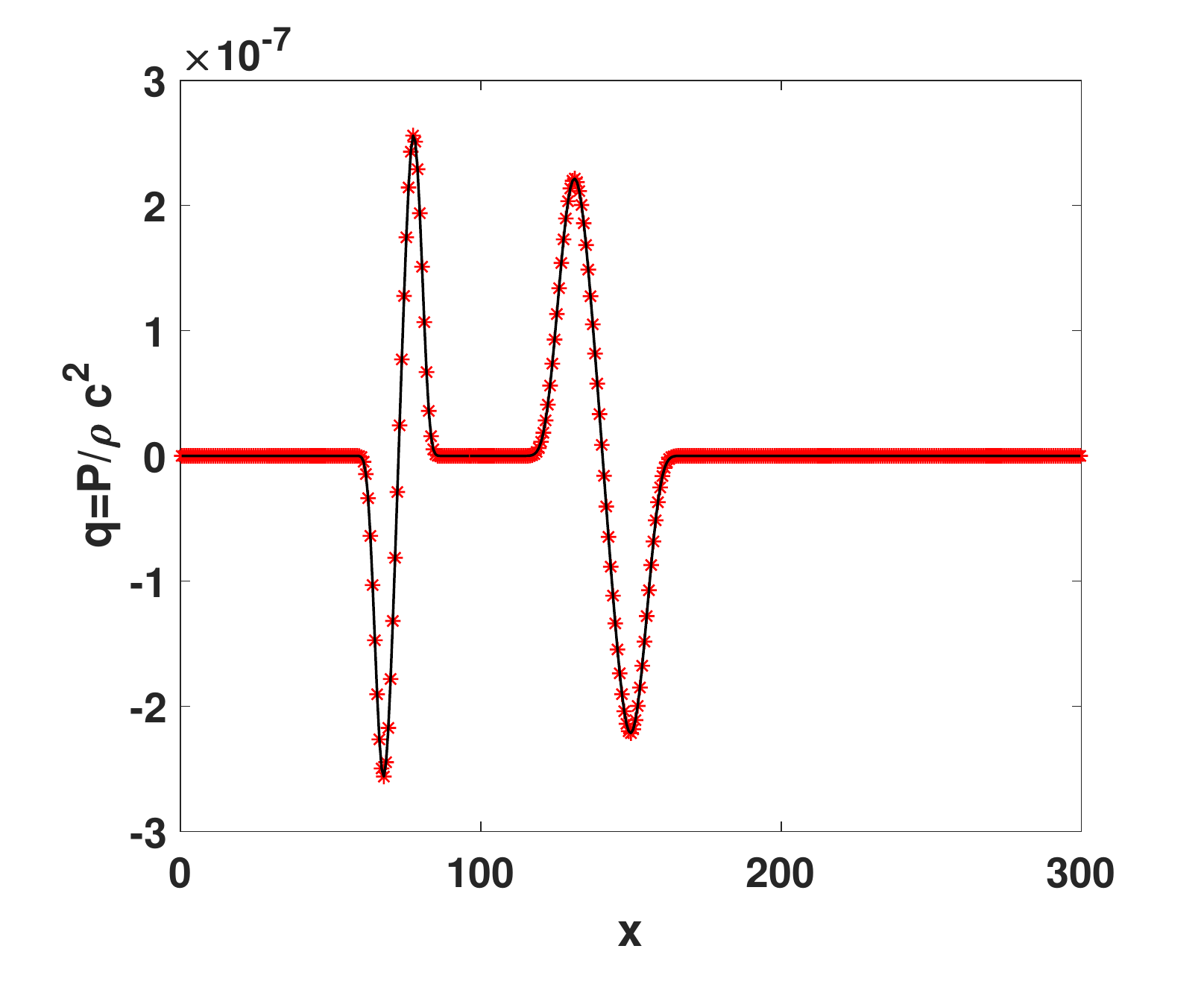}}
\caption{Initial data and solution at $t=39$ms  for the acoustic problem in \ref{sec:stationary:acoustic}.  A uniform background mesh with 400 elements and piecewise quadratic polynomials are used.
Solid line: exact solution. Symbols: numerical solution.
}
\label{exe:system:solution}
\end{center}
\end{figure}

\begin{figure}[!bhtp]
\begin{center}
\includegraphics[width=2.2in]
{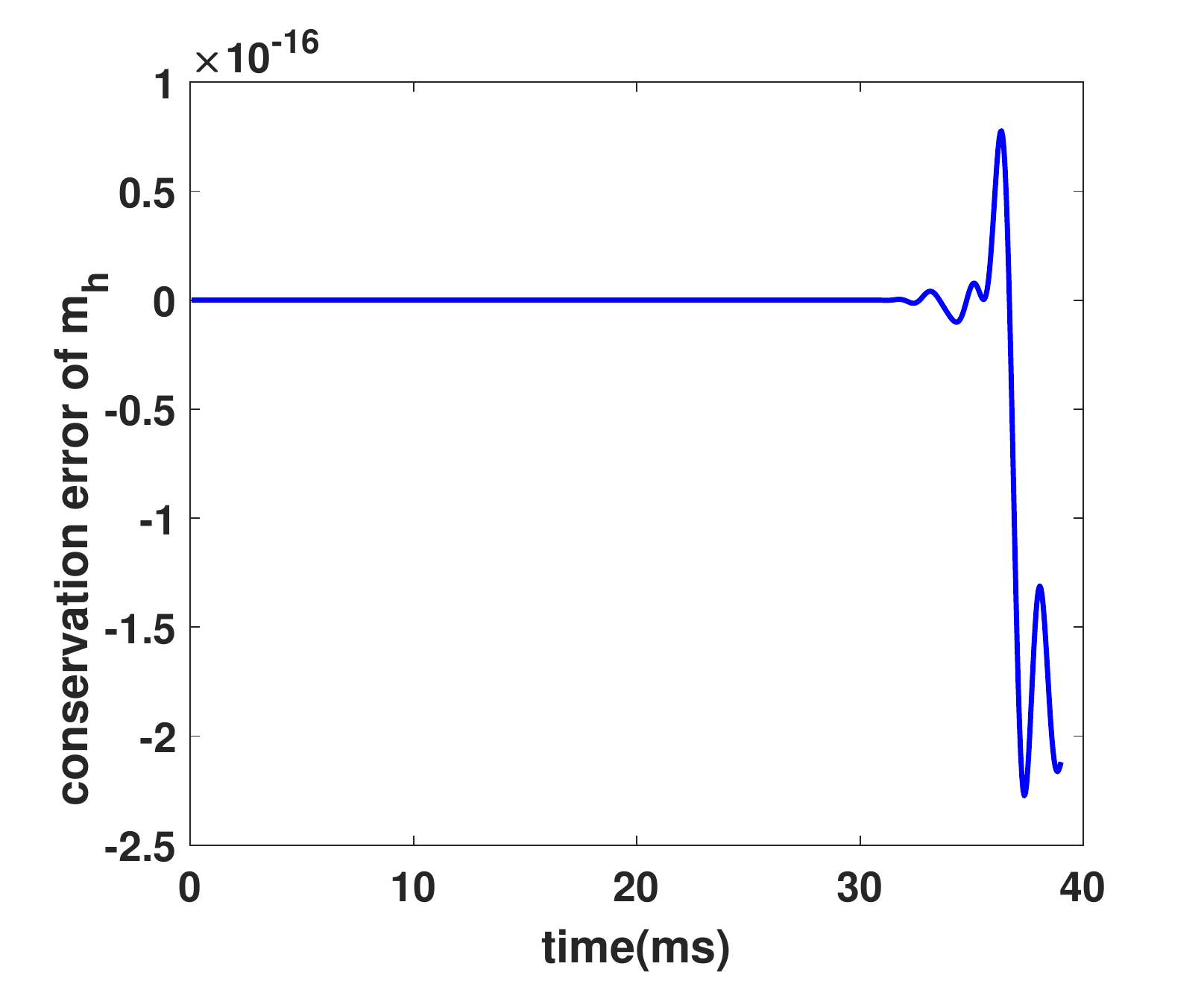}
\includegraphics[width=2.2in]
{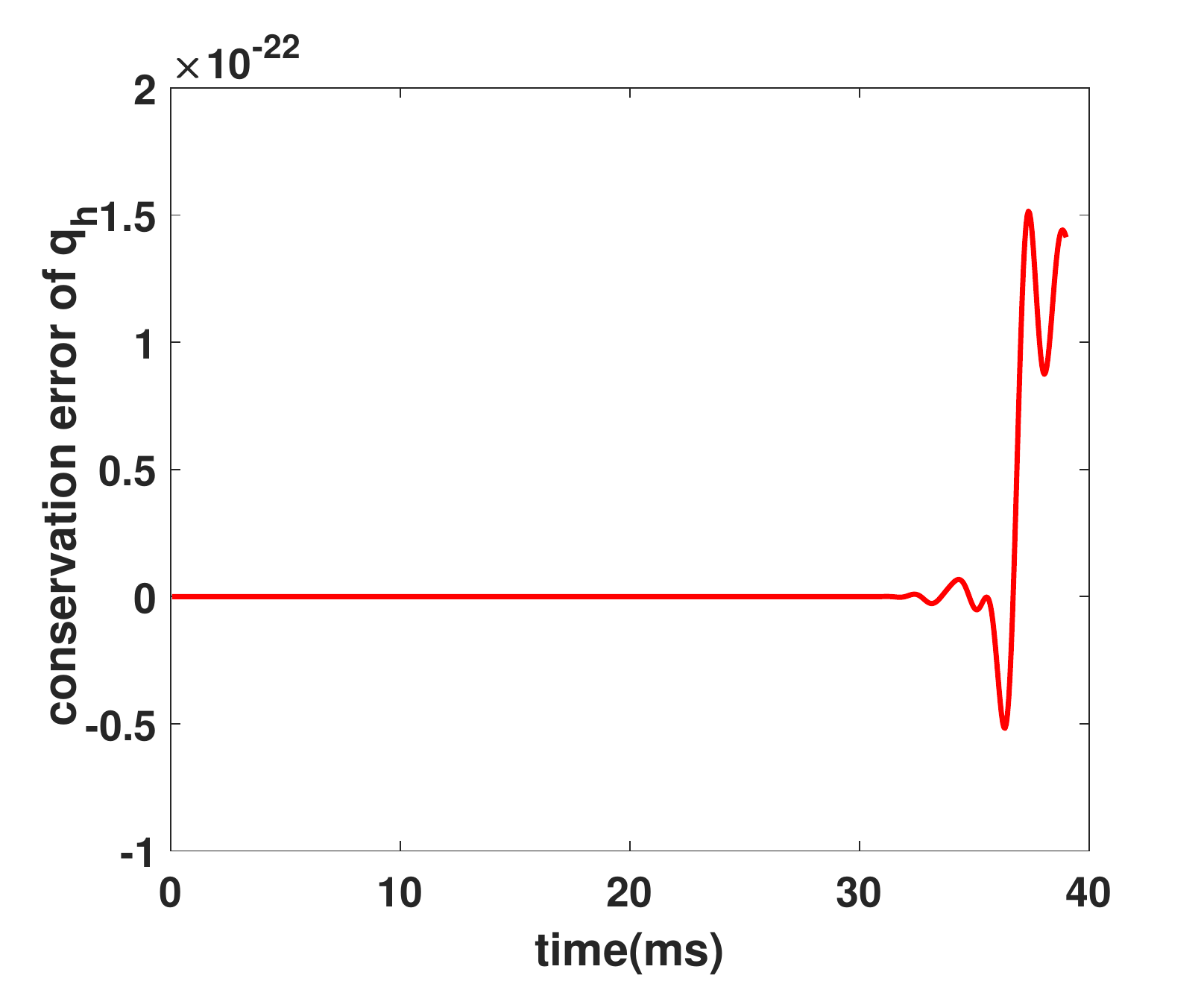}
\caption{Conservation errors for the acoustic example in \ref{sec:stationary:acoustic}. A uniform background mesh with 400 elements and piecewise
quadratic polynomials are used.
}
\label{exe:system:conservation}
\end{center}
\end{figure}

\section{Moving interface}
We now consider the scalar hyperbolic problem \eqref{eq:model}-\eqref{eq:interfacecond} for $t\in [0,T]$, with flux
\begin{equation}\label{eq:move:flux}
F(u)=au=\left\{\begin{array}{ll}
F_1(u_1)\equiv a_1 u_1, & x \in\Omega_1(t), \\
F_2(u_2)\equiv a_2 u_2, & x\in \Omega_2(t),
\end{array}\right.
\end{equation}
 and a moving interface with $x_\Gamma'(t)\neq0$. We assume  $a_1-x_\Gamma'(t)$, $a_2-x_\Gamma'(t)$ are non-zero, and have the same sign at any fixed time $t$. In the following we define a space-time CutFEM with discontinuous elements in both space and time following~\cite{hansbo2016cut,zahedi2017space,frachon2019cut}. We emphasise that
 we do not explicitly construct a space-time domain in $R^{d+1}$
 as is done in for example \cite{SBV11}.
 Here $d$ is the space dimension. The method we propose here is based on approximating the space-time integrals in the weak form by using first a quadrature rule in time. The implementation of the space-time unfitted finite element method we propose is straightforward and simple starting from an implementation of CutFEM for a stationary interface.

\subsection {Mesh and spaces}\label{eq:SPmeshandspace}
As before, let $\mathcal{T}_h$ be a quasi-uniform partition of the domain $\Omega$ generated independently of the position of the interface and let $\me_h$ denote the set containing the edges in this mesh. On this time independent mesh, that we refer to as the background mesh, we define the polynomial space $\widetilde{\mathcal{V}_h^{r_s}}$ as in \eqref{eq:BGspace}. For time $t\in [0, T]$ define $\mt_{h,i}(t)$ as in \eqref{eq: meshi} and $\me_{h,i}(t)$ as in \eqref{eq: edgesi}. These sets are now time dependent since the interface is moving and $\Omega_i$ changes with time. We also define the set
\begin{align}
 \mt_{h,\Gamma}(t)=\left\{I_j \in \mt_h: I_j \cap \Gamma(t) \neq \emptyset\right\}.
\end{align}
We discrete the interval $[0,T]$ with $0=t_0<t_1<t_2<\cdots<t_N=T$. During the time interval $I^n=[t^{n-1},t^n]$, the active meshes $\mt_{h,1}^n$ and  $\mt_{h,2}^n$, contain those elements in the background mesh that create the following subdomains $\mn_{h,1}^n$ and $\mn_{h,2}^n$, respectively
\begin{align}
 \mn_{h,1}^n&=\bigcup_{t\in I^n}\bigcup_{I_j\in \mt_{h,1}(t)}I_j, \
 \mn_{h,2}^n=\bigcup_{t\in I^n}\bigcup_{I_j\in \mt_{h,2}(t)}I_j.
\end{align}
We let $\me_{h,i}^n$  denote the set of interior edges in the active mesh $\mt_{h,i}^n$, for $i=1,2$.
We also define the set of elements in the background mesh that are cut by the interface during the interval $I^n$
\begin{align}
\mt_{h,\Gamma}^n=\left\{I_j \in  \mt_{h,\Gamma}(t):  t\in I^n \right\}.
\end{align}
Let $\Fhi^n$ contain those edges in the mesh $\mt_{h,\Gamma}^n$ that also belong to $\me_{h,i}^n$. Note that the set $\Fhi^n$ does not change in the time interval $I^n$.  For an illustration see Fig.  \ref{fig:move:domain}.
\begin{figure}[tbhp]
\begin{center}
\includegraphics[width=2.8in]{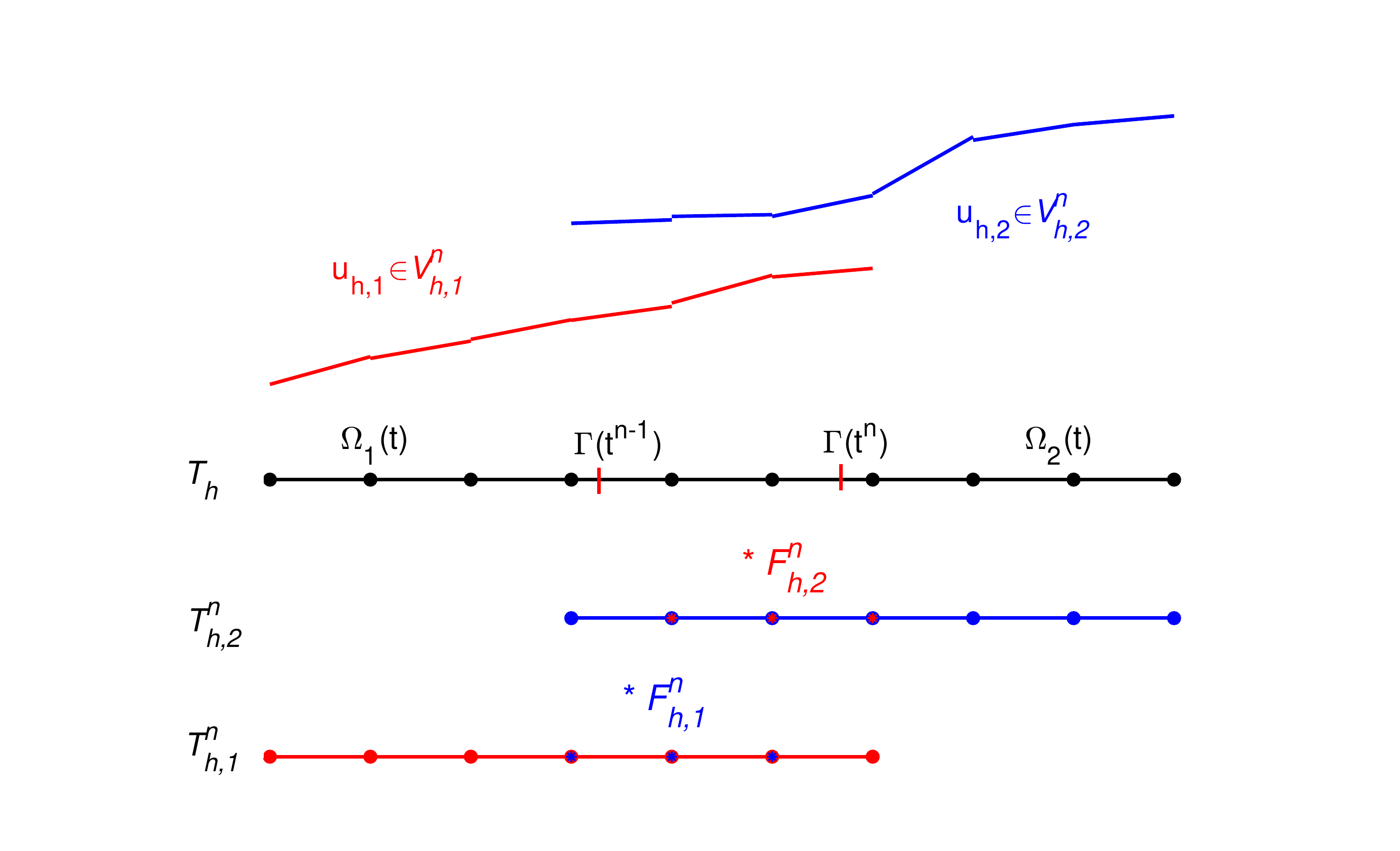}
\caption{Illustration of the active meshes and a function $u_h\in V_h^{n,1}$ at some time $t\in I^n$.
}
\label{fig:move:domain}
\end{center}
\end{figure}

In the proposed space-time method, we use piecewise polynomial spaces both in time and space. On the space-time slab $I^n\times \mn_{h,i}^n$, $i=1,2$ we define the space
$$V_{h,i}^{n,r}=\operatorname{P^{r_t}}(I^n) \otimes \widetilde{\mathcal{V}_h^{r_s}} |_{\mt_{h,i}^{n}}.$$
Here $r=(r_s, r_t)$, where $r_s$ and $r_t$ are the degree of the polynomials used in space and  time, respectively. Define the function space $\Vn$ as
\begin{equation}
\Vn=\left\{v_h=(v_{h,1},v_{h,2}): v_{h,i} \in V_{h,i}^{n,r}, i=1,2 \right\}.
\end{equation}
For example, for piecewise linear elements in time and space, $r=(1,1)$, a function $v_h\in \Vn$ can be expressed as $v_h=(v_{h,1},v_{h,2})$ with
\begin{align}\label{eq:def:u_h:p1}
v_{h, i}=v_{i00}+v_{i01} \frac{x-x_{k}}{h / 2}+v_{i10} \frac{t-t^{n-1}}{\Delta t^n}+v_{i 11} \frac{x-x_{k}}{h / 2} \frac{t-t^{n-1}}{\Delta t^n}.
\end{align}
Here, $v_{ikj}$ are the coefficients of the basis functions: $1, \frac{x-x_{k}}{h/ 2}, \frac{t-t^{n-1}}{\Delta t^n}, \frac{t-t^{n-1}}{\Delta t^n}\frac{x-x_{k}}{h/ 2}$ and $\Delta t^n=t^{n}-t^{n-1}$ denotes the time step of interval $I^n$.
Functions in $\Vn$ are discontinuous both in space and time. We define the jump and average of a function $v$ at $x$ as in~\eqref{eq: limitvx} and~\eqref{eq: jumpandmean} and the jump at a time $t^{n}$ as
\begin{align*}
&[v]^n=v^{n,+}-v^{n,-}, \text{ with } v^{n,-}=\lim\limits_{\epsilon\to 0^+}v(x,t^{n}-\epsilon), \ v^{n,+}=\lim\limits_{\epsilon\to 0^+}v(x,t^{n}+\epsilon). 
\end{align*}
When a function is single valued at  $t^n$ we will use the notation $v^n(\cdot)=v(\cdot,t^n)$.

\subsection{Weak formulation}
We now present a weak formulation where space and time are treated similarly.
For each time interval $I^n$,  given $u_h^{n-1,-}$,  find $u_h \in \Vn$ such that for $\forall v_h \in \Vn$
\begin{align}
&(u_h^{n,-},v_h^n)_{\Omega_1(t^{n})\cup\Omega_2(t^{n})} -(u_h^{n-1,-},v_h^{n-1})_{\Omega_1(t^{n-1})\cup\Omega_2(t^{n-1})}
\notag\\
&-\int_{I^n} (u_h,(v_h)_t)_{\Omega_1(t)\cup\Omega_2(t)} \ dt+ \int_{I^n} a_h(u_h,v_h) \ dt  + \rA \int_{I^n}  J_0(u_h,v_h)  \ dt= 0.
\label{scheme:move:cutDG0}
\end{align}
Here
 $u_h^{n-1,-}$ is the solution from the previous space-time slab (with $u_h^{0,-}$ given by the initial condition) and
\begin{align}
a_h(u_h,v_h) &=-(F(u_h),(v_h)_x)_{\Omega_1(t)\cup\Omega_2(t)}  - \sum_{i=1}^{2}\sum_{e\in\me_{h,i}(t)} \widehat{F}_e(u_h)[v_h]_e
\notag\\
&\quad
-\left([(F(u_h)-x_\Gamma'u_h)v_h]_{\Gamma(t)}+[F(u_h)-x_\Gamma'u_h]_\Gamma[\lambda v_h]_{\Gamma(t)} \right).
\label{spacetimeAh}
\end{align}
The flux $\widehat{F}_e(u_h)$ is defined as in~\eqref{eq:fluxe:state} with $\lambda_e=|a_i|$ for $e\in\me_{h,i}$, and the penalty parameter $\lambda$ is piecewise constant, see~\eqref{eq:lam},
  and will be chosen such that the scheme is stable and conservative (see Theorem \ref{thm:stabilitymovep}).  The stabilization term  $J_0(u_h,v_h)$ is defined as in~\eqref{stable0}, but with the set $\Fhi^n$ instead of $\Fhi(t)$. Since the set $\Fhi^n$ does not change in the time interval $I^n$,  and $u_h$ and $v_h$ are polynomials in time, the integral  $\int_{I^n} J_0(u_h,v_h)  \ dt$, can be computed analytically.  The stabilization term is introduced to control the condition number of the resulting system matrix independently of how the geometry cuts through the background mesh, and defines an extension of $u_h$ to the entire active mesh which is needed when the space-time integrals in the weak form are approximated by quadratures rules, see Section 4.4.

To see that the weak formulation above is consistent, we multiply equation~\eqref{eq:model} by a test function $v\in V_h^{n,r}$, integrate in both space and time and impose the interface condition \eqref{eq:interfacecond} weakly. Integration by parts in space and using \eqref{eq:fluxone}, together with integration by parts in time, yields
\begin{align}\label{eq:integrationtime}
& \int_{I^n} \int_{\Omega_1(t)\cup\Omega_2(t)} (uv)_t  \ dx dt = \int_{I^n}  \frac{d}{dt} \int_{\Omega_1(t)\cup\Omega_2(t)} u v \ dx dt + \int_{I^n} [x_\Gamma'(t)u v]_\Gamma  \ dt \nonumber \\
& =
(u^n,v^n)_{\Omega_1(t^{n})\cup\Omega_2(t^{n})} -(u^{n-1},v^{n-1})_{\Omega_1(t^{n-1})\cup\Omega_2(t^{n-1})} + \int_{I^n} [x_\Gamma'(t)u v]_\Gamma  \ dt.
\end{align}
By adding $([u]^{n-1},v^{n-1})_{\Omega_1(t^{n-1})\cup\Omega_2(t^{n-1})}$,   using $[u]^{n}=0$ at all time $t=t^{n}$ and using the identity $u_tv=(uv)_t-uv_t$  we get the proposed weak formulation.

Choosing the test function $v_h=1$ in \eqref{scheme:move:cutDG0} we have
\begin{align}
\int_{\Omega(t^{n})}& u_h^{n,-} \ dx-\int_{\Omega(t^{n-1})} u_h^{n-1,-} \ dx\notag\\
&=\int_{I^n}\left(\widehat{F}(u_h(x_L,t))-\widehat{F}(u_h(x_R,t))\right) \ dt \notag\\
&+\int_{I^n}([F(u_h)-x_\Gamma'u_h]_\Gamma+(\lambda_2-\lambda_1)[F(u_h)-x_\Gamma'u_h]_\Gamma) \ dt.
\end{align}
With $\lambda_2-\lambda_1+1=0$, as in \eqref{eq:conservcondlamb},  the proposed space-time CutFEM is conservative.

\begin{remark}
Note that we can also consider the space-time formulation without integration by parts in time. Thus,  given $u_h^{n-1,-}$, find $u_h \in \Vn $ such that
\begin{align}
&\int_{I^n} ((u_h)_t, v_h )_{\Omega_1(t)\cup\Omega_2(t)} dt+([u_h]^{n-1},v^{n-1}_h)_{\Omega_1(t^{n-1})\cup\Omega_2(t^{n-1})}
\notag\\
-&\int_{I^n} (F(u_h),(v_h)_x)_{\Omega_1(t)\cup\Omega_2(t)} \ dt - \int_{I^n}  \sum_{i=1}^{2}\sum_{e\in\me_{h,i}} \widehat{F}_e(u_h)[v_h]_e\ dt
\notag\\
-&\int_{I^n}\left([F(u_h)v_h]_\Gamma+[F(u_h)-x_\Gamma'u_h]_\Gamma[\lambda v_h]_\Gamma\right) \ dt
+\rA\int_{I^n} J_0(u_h,v_h)dt=0,
\label{scheme:move:cutDG1}
\end{align}
for all  $v_h \in \Vn$. This weak formulation is also consistent. With $v_h=1$  we get
\begin{align}
&\int_{I^n}\int_{\Omega_1(t)\cup\Omega_2(t)}(u_h)_t \ dxdt- \int_{I^n}(\widehat{F}(u(x_L,t))-\widehat{F}(u(x_R,t)) ) \ dt \notag\\ \quad
&-\int_{I^n}\left([F(u_h)]_\Gamma+[F(u_h)-x_\Gamma'u_h]_\Gamma(\lambda_2-\lambda_1)\right) \ dt +\int_{\Omega(t^{n-1})}[u_h]^{n-1} \ dx=0.
\end{align}
If \eqref{eq:integrationtime} (with $v=1$) holds for the discrete function $u_h$, that is if
\begin{align}
&\int_{I^n}\int_{\Omega_1(t)\cup\Omega_2(t)}(u_h)_t \ dxdt - \int_{I^n} [x_\Gamma'u_h]_\Gamma \ dt + \int_{\Omega(t^{n-1})}[u_h]^{n-1} \ dx
\notag\\ \quad
&= \int_{\Omega(t^{n})} u_h^{n,-} \ dx -\int_{\Omega(t^{n-1})} u_h^{n-1,-} \ dx,\label{eq:integrationtime2}
\end{align}
it follows that the scheme \eqref{scheme:move:cutDG1} is conservative when \eqref{eq:conservcondlamb} holds, i.e. $\lambda_2-\lambda_1=-1$. However, in the fully discrete scheme we use a quadrature rule to approximate the time integral. Since the domain is time-dependent, \eqref{eq:integrationtime2} can not hold exactly.
Therefore a method based on the weak formulation \eqref{scheme:move:cutDG1} may not be exactly conservative with a conservation error depending on the accuracy of the quadrature rule. This is demonstrated in the numerical examples.
\end{remark}

\subsection{Stability analysis of the semi-discrete scheme}
We now consider the energy stability of the proposed semi-discrete space-time CutFEM  \eqref{scheme:move:cutDG0}.
Similar to the scalar advection problem \eqref{eq:linear:state} with a stationary interface, we consider the weighted energy with $\eta>0$, i.e.,
\begin{align}\label{eq:move:def:energy}
E_\eta(t,u_h)=\frac{1}{2}\int_{\Omega_1(t)} (u_{h,1}(x,t))^2 \ dx+\frac{\eta}{2}\int_{\Omega_2(t)} (u_{h,2}(x,t))^2 \ dx.
\end{align}
For convenience, we will use the notation $\Omega_1^n$, $\Omega_2^n$ for $\Omega_1(t^n)$, $\Omega_2(t^n)$, respectively.  Choosing $v_h=(u_{h,1},\eta u_{h,2})$ in \eqref{scheme:move:cutDG0} we have
\begin{align}
&\int_{\Omega_1^{n}} (u_{h,1}^{n,-})^2 \ dx-\int_{\Omega_1^{n-1}}u_{h,1}^{n-1,-}u_{h,1}^{n-1,+} \ dx-\int_{I_n}\int_{\Omega_1(t)}u_{h,1}(u_{h,1})_t  \ dx dt
\notag\\
&+\eta \int_{\Omega_2^{n}} (u_{h,2}^{n,-})^2 \ dx -\eta\int_{\Omega_2^{n-1}}u_{h,2}^{n-1,-}u_{h,2}^{n-1,+}dx -\eta\int_{I_n}\int_{\Omega_2(t)}u_{h,2}(u_{h,2})_t  \ dx dt
 \notag\\
&=-\int_{I_n} a_h(u_h,v_h) \ dt  -\rA \int_{I_n}  J_0(u_h,v_h)  \ dt.
\label{eq:move:stable3}
\end{align}
Integrating the last two terms in the left hand side of equation \eqref{eq:move:stable3} we get
\begin{align}
Lhs&=-\frac{1}{2}\left(\int_{\Omega_1^{n}} (u_{h,1}^{n,-})^2 dx-\int_{\Omega_1^{n-1}}(u_{h,1}^{n-1,+})^2dx-\int_{I_n}x_\Gamma'(u_{h,1}( x_\Gamma,t)) ^2\ dt\right)\notag\\
&-\frac{\eta}{2}\left(\int_{\Omega_2^{n}} (u_{h,2}^{n,-})^2 dx-\int_{\Omega_2^{n-1}}(u_{h,2}^{n-1,+})^2dx+\int_{I_n}x_\Gamma'(u_{h,2}( x_\Gamma,t)) ^2 \ dt\right)
\notag\\
&+\int_{\Omega_1^{n}} (u_{h,1}^{n,-})^2 dx-\int_{\Omega_1^{n-1}}u_{h,1}^{n-1,+}u_{h,1}^{n-1,-}dx \notag\\
&+\eta\left(\int_{\Omega_2^{n}} (u_{h,2}^{n,-})^2 dx-\int_{\Omega_2^{n-1}}u_{h,2}^{n-1,+}u_{h,2}^{n-1,-}dx\right)
\notag\\
=&E_\eta(t^{n},u_h^{n,-})-E_\eta(t^{n-1},u_h^{n-1,-}) + \frac{1}{2} \left( \| [u_{h,1}]^{n-1} \|^2_{\Omega_1^{n-1}} + \eta \| [u_{h,2}]^{n-1} \|^2_{\Omega_2^{n-1}} \right)
\notag\\
&-\frac{1}{2} \int_{I_n} \left(\eta x_\Gamma'(u_{h,2}( x_\Gamma,t)) ^2-x_\Gamma'(u_{h,1}( x_\Gamma,t)) ^2\right) \ dt.
\label{eq:energy:lhs}
\end{align}
As in the analysis in Section 3.3, using the definition of $a_h(\cdot,\cdot)$, and integrating the term $(au_{h,i},(u_{h,i})_x)_{\Omega_i(t)}$, taking into account that $u_{h,i}$ is discontinuous across the edges, using that $\frac{a}{2}[u_{h}^2]_e=a\{ u_h \}_e[u_{h}]_e$, and the definition of $\widehat{F}_e$ \eqref{eq:fluxe:state}, we have
\begin{align}
&-\int_{I_n}  a_h(u_h,v_h)  \ dt -\rA \int_{I_n}  J_0(u_h,v_h)  \ dt  \nonumber \\
&+\frac{1}{2} \int_{I_n} x_\Gamma'\left(\eta (u_{h,2}( x_\Gamma,t)) ^2-(u_{h,1}( x_\Gamma,t)) ^2\right) \ dt = -ET-IT,
\label{eq:move:RHS}
\end{align}
where $v_h=(u_{h,1},\eta u_{h,2})$ and
\begin{align}\label{eq:ET}
ET&=\int_{I_n}\left(\frac{a_1}{2}\sum_{e\in \me_{h,1}}([u_{h,1}]_e)^2+\eta\frac{a_2}{2}\sum_{e\in \me_{h,2}}([u_{h,2}]_e)^2 \right)\ dt,
\notag\\
&+\int_{I_n}\rA \left(\sum_{e\in \mathcal{F}_{h,1}^n} J_0(u_{h,1},u_{h,1})+\sum_{e\in \mathcal{F}^n_{h,2}} \eta J_0(u_{h,2},u_{h,2}) \right) \ dt,
\end{align}
\begin{align}\label{eq:IT}
IT=
&\int_{I_n} \left(\frac{1}{2}(a_1-x_\Gamma')u_{h,1} +\lambda_1[(a-x_\Gamma')u_h]_\Gamma \right)u_{h,1} \ dt
\notag\\
-&\eta 
\int_{I_n} \left(\frac{1}{2}(a_2-x_\Gamma')u_{h,2}+\lambda_2[(a-x_\Gamma')u_{h}]_\Gamma \right)u_{h,2} \ dt. 
\end{align}
Collecting the results in \eqref{eq:move:stable3}-\eqref{eq:IT} and letting $\widetilde{a_1}=a_1-x_\Gamma'$ and $\widetilde{a_2}=a_2-x_\Gamma'$
yields
\begin{align}
&E_\eta(t^n,u_h^{n,-})-E_\eta(t^{n-1},u_h^{n-1,-})+  \frac{1}{2} \left( \| [u_{h,1}]^{n-1} \|^2_{\Omega_1^{n-1}} + \eta \| [u_{h,2}]^{n-1} \|^2_{\Omega_2^{n-1}} \right) \notag\\
&=-ET-\int_{I_n}\mathbf{u}_h^{\text T}S\mathbf{u}_h\ dt,
\end{align}
with
\begin{equation} \label{eq:matrixAm}
S=\begin{pmatrix}
(\frac{1}{2}-\lambda_1)\widetilde{a_1}    & \frac{1}{2}(\widetilde{a_2}\lambda_1+\widetilde{a_1} \eta \lambda_2)  \\
\frac{1}{2}(\widetilde{a_2}\lambda_1+\widetilde{a_1} \eta \lambda_2)   &  -(\frac{1}{2}+\lambda_2) \eta \widetilde{a_2}
\end{pmatrix},
\quad\mathbf{u}_h=\left(\begin{array}{c}
u_{h,1}(x_\Gamma,t) \\
u_{h,2}(x_\Gamma,t)
\end{array}\right).
\end{equation}
In \eqref{eq:ET} $ET\geq 0$ and will not contribute to energy growth. Hence, if the matrix $S$ is positive semi-definite, we obtain energy stability. Note that the matrix $S$ in \eqref{eq:matrixAm} is of the same form as in the case of a stationary interface,  see \eqref{eq:matrixAs},  but with $\widetilde{a_i}$ instead of $a_i$.  
Thus,  by Lemma \ref{lemma:matrixA} we have the following theorem.
\begin{theorem}\label{thm:stabilitymovep}
Consider the problem \eqref{eq:model}-\eqref{eq:interfacecond} with the flux function \eqref{eq:move:flux},  and a moving interface at $x_\Gamma(t)$ such that
$a_1-x_\Gamma'(t)$ and $a_2-x_\Gamma'(t)$ have equal, non-zero and constant sign for all time $t$. With penalty parameters $\lambda_1$ and $\lambda_2$ satisfying \eqref{eq:conservcondlamb} and
\begin{align}\label{eq:state:parametercondition}
\left\{\begin{array}{ll}
{\lambda_1\leq\frac{1}{2},\lambda_2\leq-\frac{1}{2},}&{\text{if } a_1-x_\Gamma'>0,a_2-x_\Gamma'>0,}\\
{\lambda_1\geq\frac{1}{2},\lambda_2\geq-\frac{1}{2},}&{\text{if } a_1-x_\Gamma'<0,a_2-x_\Gamma'<0,}\\
\end{array}\right.
\end{align}
 the space-time CutFEM~\eqref{scheme:move:cutDG0}  is conservative and there exists a positive $\eta$ such that the energy defined in \eqref{eq:move:def:energy} does not grow with time.
\end{theorem}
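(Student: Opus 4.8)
\emph{Proof idea.} The plan is to split the statement into its two assertions — conservation and non-growth of $E_\eta$ — and establish each by specialising the weak form \eqref{scheme:move:cutDG0} to a suitable test function, then closing the stability part with Lemma \ref{lemma:matrixA}. For conservation I would take $v_h\equiv 1$: the bulk gradient term, the $J_0$ stabilisation and the time-jump term all vanish, the mesh-edge fluxes telescope to $\widehat{F}$ at $x_L$ and $x_R$, and the interface contribution reduces to $(1+\lambda_2-\lambda_1)\int_{I^n}[F(u_h)-x_\Gamma'u_h]_\Gamma\,dt$, which is zero under \eqref{eq:conservcondlamb}. Summing over the slabs $I^1,\dots,I^n$ then shows that $\int_{\Omega(t^n)}u_h^{n,-}\,dx-\int_{\Omega(0)}f\,dx$ equals the accumulated physical boundary flux, so the scheme is globally conservative; the local conservation statement follows by the same argument restricted to an interior patch of elements.

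For the energy bound I would carry out the computation already outlined in \eqref{eq:move:stable3}--\eqref{eq:matrixAm}: insert $v_h=(u_{h,1},\eta u_{h,2})$ with $\eta>0$ into \eqref{scheme:move:cutDG0}, integrate the time-derivative term by parts over $I^n$ --- here the Leibniz rule on the moving domains $\Omega_i(t)$ produces the extra boundary contributions $\tfrac12\int_{I^n}x_\Gamma'(u_{h,1}(x_\Gamma,t))^2\,dt-\tfrac{\eta}{2}\int_{I^n}x_\Gamma'(u_{h,2}(x_\Gamma,t))^2\,dt$ --- integrate $(a_iu_{h,i},(u_{h,i})_x)_{\Omega_i(t)}$ by parts in space, and rewrite the mesh-edge contributions using $\tfrac{a_i}{2}[u_{h,i}^2]_e=a_i\{u_{h,i}\}_e[u_{h,i}]_e$ together with the Lax--Friedrichs flux \eqref{eq:fluxe:state} with $\lambda_e=|a_i|$. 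This produces the slab identity relating $E_\eta(t^n,u_h^{n,-})-E_\eta(t^{n-1},u_h^{n-1,-})$ plus the nonnegative time-jump terms $\tfrac12\|[u_{h,1}]^{n-1}\|^2_{\Omega_1^{n-1}}+\tfrac{\eta}{2}\|[u_{h,2}]^{n-1}\|^2_{\Omega_2^{n-1}}$ to $-ET-\int_{I^n}\mathbf{u}_h^{\mathrm T}S\mathbf{u}_h\,dt$, where $ET$ is a sum of nonnegative upwind edge dissipation and ghost-penalty squares (using $\omega_k>0$ and $\gamma_A\ge0$), and $S$ is the $2\times2$ matrix in \eqref{eq:matrixAm}.

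The decisive observation is then that $S$ has precisely the algebraic structure of the stationary-interface matrix \eqref{eq:matrixAs} with $a_i$ replaced by $\widetilde a_i=a_i-x_\Gamma'$ --- which is exactly why the moving-interface boundary terms from the time integration by parts must be tracked carefully, since they are what convert $a_i$ into $\widetilde a_i$ without leaving a leftover $x_\Gamma'$ cross term. Applying Lemma \ref{lemma:matrixA} with $\widetilde a_i$ in place of $a_i$ shows that, under \eqref{eq:conservcondlamb} and the sign conditions \eqref{eq:state:parametercondition}, one can choose a positive $\eta$ for which $S$ is positive semi-definite; with that $\eta$ the slab identity forces $E_\eta(t^n,u_h^{n,-})\le E_\eta(t^{n-1},u_h^{n-1,-})$, and telescoping from $t^n$ down to $t^0$ gives $E_\eta(t^n,u_h^{n,-})\le E_\eta(t^0,u_h^{0,-})$ for all $n$, i.e.\ the energy does not grow. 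I expect the only real obstacle to be the algebra in this last reduction: confirming that the Leibniz/integration-by-parts interface terms and the penalty terms $[\lambda v_h]_\Gamma$ recombine cleanly into the $\widetilde a_i$-form of $S$, and that the single scalar $\eta$ furnished by Lemma \ref{lemma:matrixA} simultaneously makes $S$ positive semi-definite and keeps the edge/ghost weighting in $ET$ nonnegative --- everything after that is a direct appeal to the lemma.
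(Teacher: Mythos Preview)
Your proposal is correct and follows essentially the same approach as the paper's proof in Section~4.3: the conservation part is exactly the $v_h=1$ computation done right after \eqref{scheme:move:cutDG0}, and the stability part is precisely the energy argument \eqref{eq:move:stable3}--\eqref{eq:matrixAm} culminating in the application of Lemma~\ref{lemma:matrixA} with $\widetilde a_i=a_i-x_\Gamma'$ in place of $a_i$. You have also correctly isolated the one non-routine point --- that the Leibniz contributions from the moving boundaries are what shift $a_i$ to $\widetilde a_i$ in $S$ --- which is the same observation the paper makes just before stating the theorem.
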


\subsection{Quadrature in time}\label{sec:quadrature}
As in \cite{hansbo2016cut,zahedi2017space,frachon2019cut}, we approximate the space-time integrals in the variational formulation using quadrature rules, first in time and then in space.
Note that using a quadrature rule in time we have
\begin{align}
\int_{I^n}\int_{\Omega_i(t)} f(x,t) dxdt\approx \sum_{q=1}^{n_{q}} \omega_{q}^{n}\int_{\Omega_i(t_q^n)} f(x,t_q^n)dx.
\end{align}
Here, $\omega_{q}^{n}$ are the quadrature weights, $t_{q}^{n}$, $q=1, \ldots, n_{q}$ are quadrature points in the interval $I^n$, and $n_{q}$ is the number of quadrature points.

In the numerical examples,  both the trapezoidal rule and Simpson's rule are used.  In the time interval $I^n=[t^{n-1},t^{n}]$,  the trapezoidal rule is given by  two quadrature points, $t_{1}^{n}=t^{n-1}$ and $t_{2}^{n}=t^{n},$ and weights $\omega_{1}^{n}=\omega_{2}^{n}=\frac{\Delta t^{n}}{2}$. In Simpson's quadrature rule, the three quadrature points are $t_{1}^{n}=t^{n-1},$ $t_{2}^{n}=\frac{t^{n-1}+t^{n}}{2}$, and $t_{3}^{n}=t^{n}$, and the weights are $\omega_{1}^{n}=\omega_{3}^{n}=\frac{\Delta t^{n}}{6}$ and $\omega_{2}^{n}=\frac{4 \Delta t^{n}}{6}$.

\subsection{Numerical examples with moving interfaces}
We use the proposed space-time cut finite element method \eqref{scheme:move:cutDG0} to solve problem \eqref{eq:model}-\eqref{eq:interfacecond} with flux \eqref{eq:move:flux} and an moving interface $x_\Gamma(t)$.

\subsubsection{Scalar problem with a moving interface: Accuracy} \label{sec:ex1moving}
We use $a_1=2$, $a_2=1$, $x_\Gamma(0)=10^{-4}$, $x'_\Gamma=0.111$, and the initial value
\begin{align}
u(x,0)=f(x)=\left\{\begin{array}{ll}
{\sin(2\pi x),} & {x\in[-1,x_\Gamma(0)]}, \\
{\beta\sin(2\pi \beta x+2\pi x_\Gamma(0) (1-\beta))}, & {x\in[x_\Gamma(0),1].}
\end{array}\right.
\end{align}
Here $\beta=\frac{a_1-x_\Gamma'}{a_2-x_\Gamma'}$. This initial condition satisfies the interface condition \eqref{eq:interfacecond}. The inflow boundary condition $g(t)=u(x_L,t)=\sin(2\pi(x_L-2t))$ is used. The outflow boundary condition is used on the right boundary.
The exact solution is
\begin{align}\label{test:move:exact}
u(x,t)=\left\{\begin{array}{ll}
{\sin(2\pi (x-2t)),} & {x\in[-1,x_\Gamma(t)]}, \\
{\beta\sin(2\pi \beta(x-t)+2\pi x_\Gamma(0)(1-\beta))}, & {x\in[x_\Gamma(t),1].}
\end{array}\right.
\end{align}
We use the space-time CutFEM \eqref{scheme:move:cutDG0} with discontinuous  piecewise linear polynomials in time and discontinuous  piecewise linear and  quadratic polynomials in space. For the time integration we use Simpson's rule. The time step is $\Delta t=h/12$ when linear elements are used in space, that is $r=(1,1)$, and $\Delta t=0.005h$ when quadratic polynomials are used in space, that is $r=(2,1)$. In the latter case the time step is small enough so that the error is not dominated by the error in the time discretization.  We use $\rA=0.75$, $\lambda_1=0$, and $\lambda_2=\lambda_1-1$. We solve the problem up to time $t=0.1$. In Table \ref{table:move:accuracy}, we show the $L^2$-and $L^\infty$-errors for different mesh sizes $h=2/N$  and we observe that the space-time CutFEM \eqref{scheme:move:cutDG0} has the optimal order of accuracy for this moving interface problem. Note that we use a uniform background mesh with mesh size $h$ and the interface cuts the mesh arbitrarily as it evolves in time.  
\begin{table}[!bhtp]
\caption{\label{table:move:accuracy} {
Errors and orders of accuracy at $t=0.1$ for the  problem  in Sect.  \ref{sec:ex1moving}  with a moving interface. The approximation uses space-time polynomials of orders (1,1) and  (2,1), respectively, and a uniform  background mesh with $N$ elements in space.
} }
\begin{small}
\begin{tabular}{c|cccc|cccc}
\hline\noalign{\smallskip}
N &$L^2 $ error & order &$L^{\infty} $ error & order &$L^2 $ error & order &$L^{\infty} $ error & order \\
\noalign{\smallskip}\hline\noalign{\smallskip}
&\multicolumn{4}{c|}{$P^1$ in space}&\multicolumn{4}{c}{$P^2$ in space}\\\noalign{\smallskip}\hline\noalign{\smallskip}
20&	1,46E-01	&	-	&	4,50E-01	&	-	&	1,25E-02	&	-	&	7,70E-02	&	-	\\
40&	3,91E-02	&	1,90	&	1,40E-01	&	1,69	&	1,63E-03	&	2,94	&	1,03E-02	&	2,90	\\
80&	1,00E-02	&	1,96	&	3,76E-02	&	1,89	&	2,07E-04	&	2,98	&	1,31E-03	&	2,98	\\
160&	2,56E-03	&	1,97	&	9,65E-03	&	1,96	&	2,62E-05	&	2,98	&	1,94E-04	&	2,76	\\
320&	6,41E-04	&	1,99	&	2,71E-03	&	1,83	&	3,27E-06	&	3,00	&	2,38E-05	&	3,03	\\
\hline\noalign{\smallskip}
\end{tabular}
\end{small}
\end{table}

\subsubsection{Scalar problem with a moving interface: Conservation}\label{sec:ex2moving}
We consider the same example as in \cite{la2016well} but with a moving interface
\begin{align}
x_\Gamma(t)=x_\Gamma(0)+0.4\sin(t)(x_\Gamma(0)-x_L)(x_R-x_\Gamma(0)),
\end{align}
where $x_\Gamma(0)=-0.499$.
Let $a_1=2$, $a_2=1$, $f(x)=0$ on the domain $\Omega=[-1,1]$.
The inflow boundary condition $g(t)=\sin (4 \pi(-1+3 t))$ is used on the left boundary. We solve the problem up to time $t=1$ when the outflow information is still zero. The space-time CutFEM is used to solve the problem with parameters $\lambda_1=0$ and $\lambda_2=\lambda_1-1$, a uniform background mesh with $400$ elements, and linear elements both in time and space, $r=(1,1)$. Simpson's rule is used for the time integration with Courant number $C=1/6$.
We measure the conservation error as in \eqref{eq:em2}, but replacing $u_h^n$ by $u_h^{n,-}$ and noting that $\Omega=\Omega_1(t)\cup\Omega_2(t)$.

In the left panel of Fig. \ref{fig:moving:spacetime2:p0p0:p1p1},  the numerical solution $u_h$ is shown and we can observe that the proposed space-time CutFEM can simulate the problem with a moving interface well.  We note that the solution has a weak discontinuity and it is not surprising that small oscillations appear.
In the right side of Fig. \ref{fig:moving:spacetime2:p0p0:p1p1},
we show the conservation error $e(t)$ for the numerical solution $u_h$. We see that the proposed space-time method is conservative.  In Fig. \ref{fig:moving:spacetime1:p1p1:conservationerror},  we also show the numerical solution $u_h$ and the conservation error $e(t)$ using the variational formulation \eqref{scheme:move:cutDG1}. We observe that this scheme also simulates this problem well, but the conservation error is significantly larger when the weak formulation \eqref{scheme:move:cutDG1} is used.

\begin{figure}[tbhp]
\begin{center}
\includegraphics[width=2.3in]{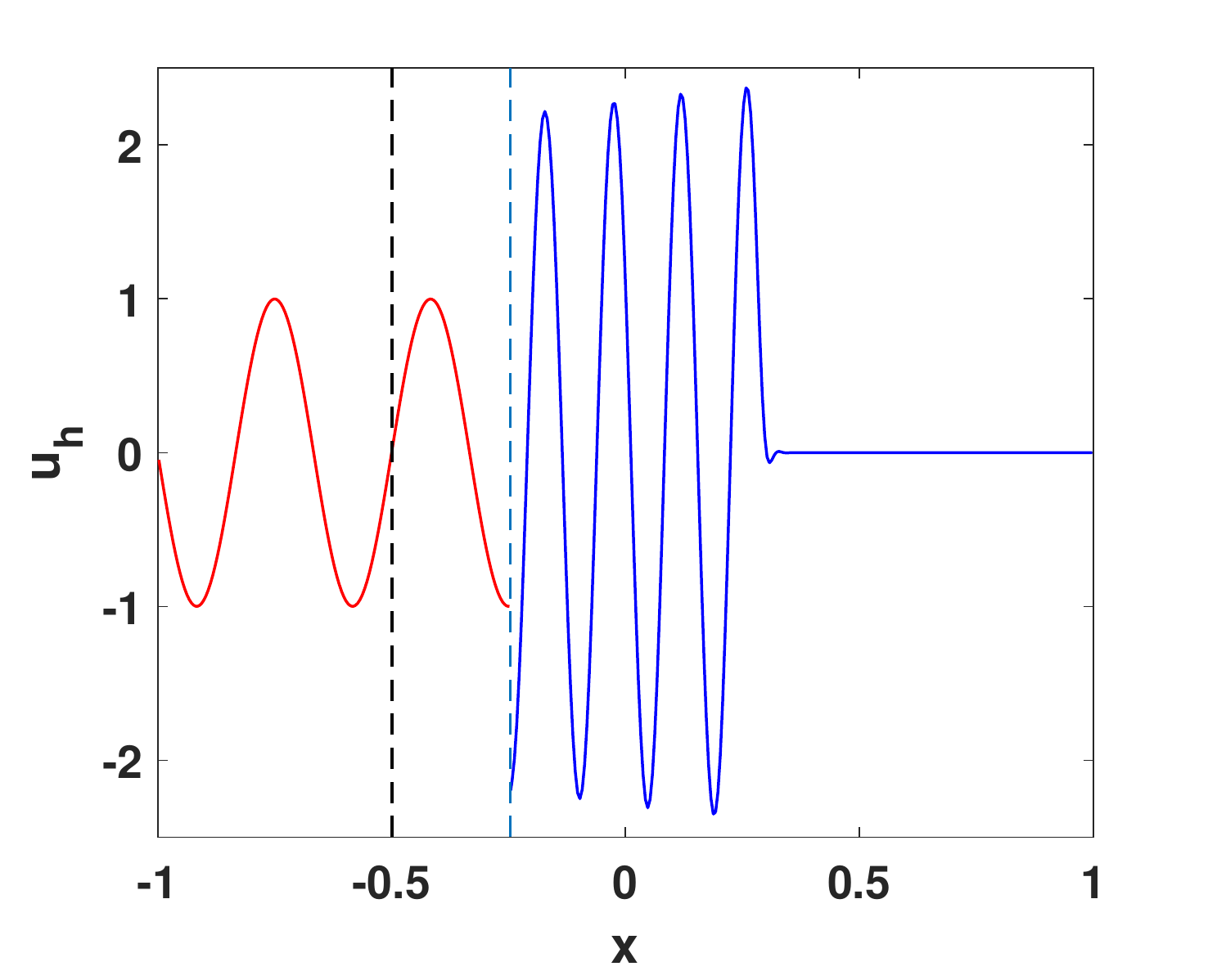}
\includegraphics[width=2.3in]{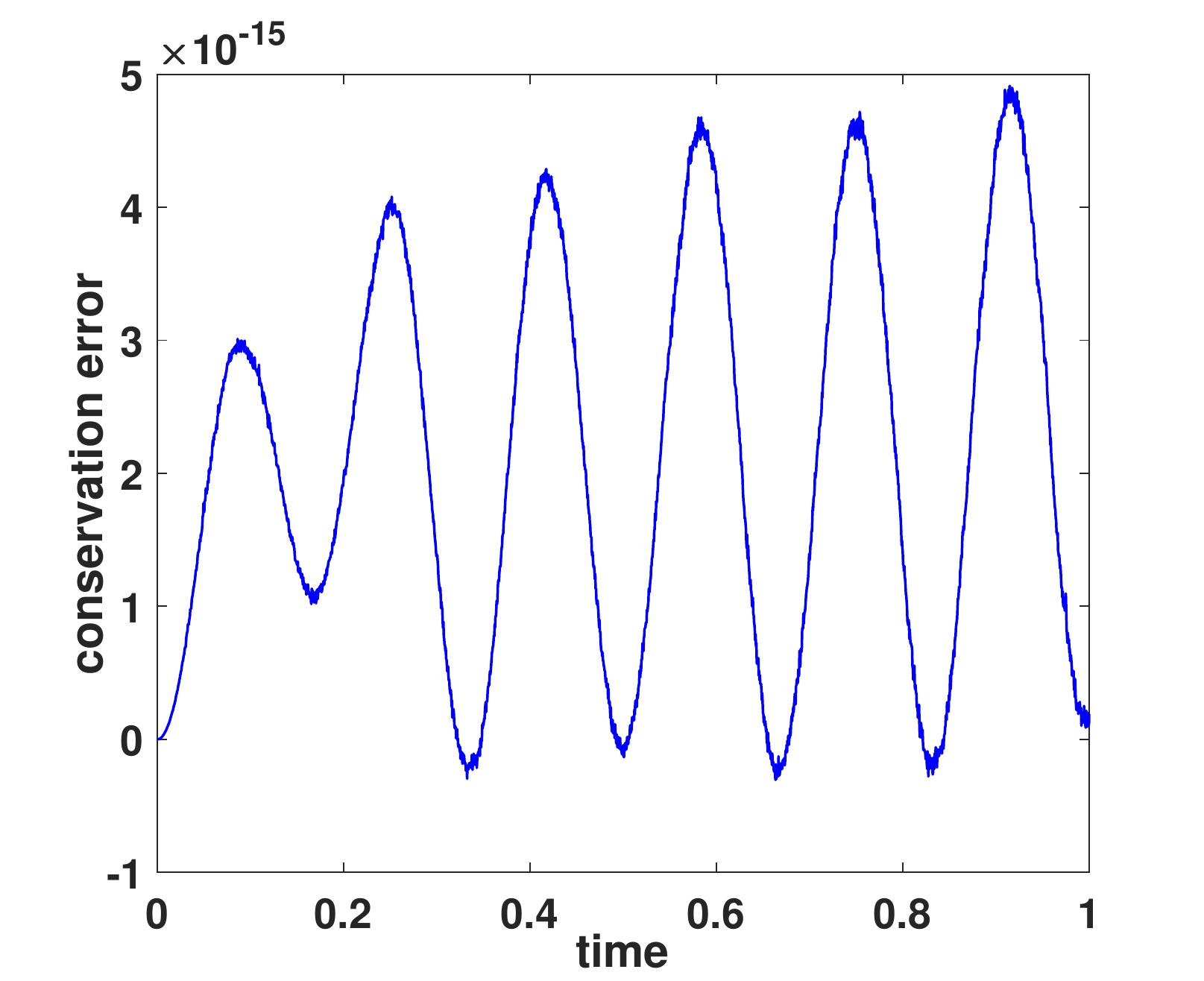}
\caption{Results for the problem  in Sect. \ref{sec:ex2moving}, solved by the proposed space-time method \eqref{scheme:move:cutDG0} with r=(1,1) on a uniform background mesh with 400 elements in space. Left: $u_h$ at $t=1$, with initial and present interface positions  indicated by dashed lines. Right: Conservation error $e(t)$ (see Section 3.4.2).
}
\label{fig:moving:spacetime2:p0p0:p1p1}
\end{center}
\end{figure}

\begin{figure}[tbhp]
\begin{center}
\includegraphics[width=2.3in]{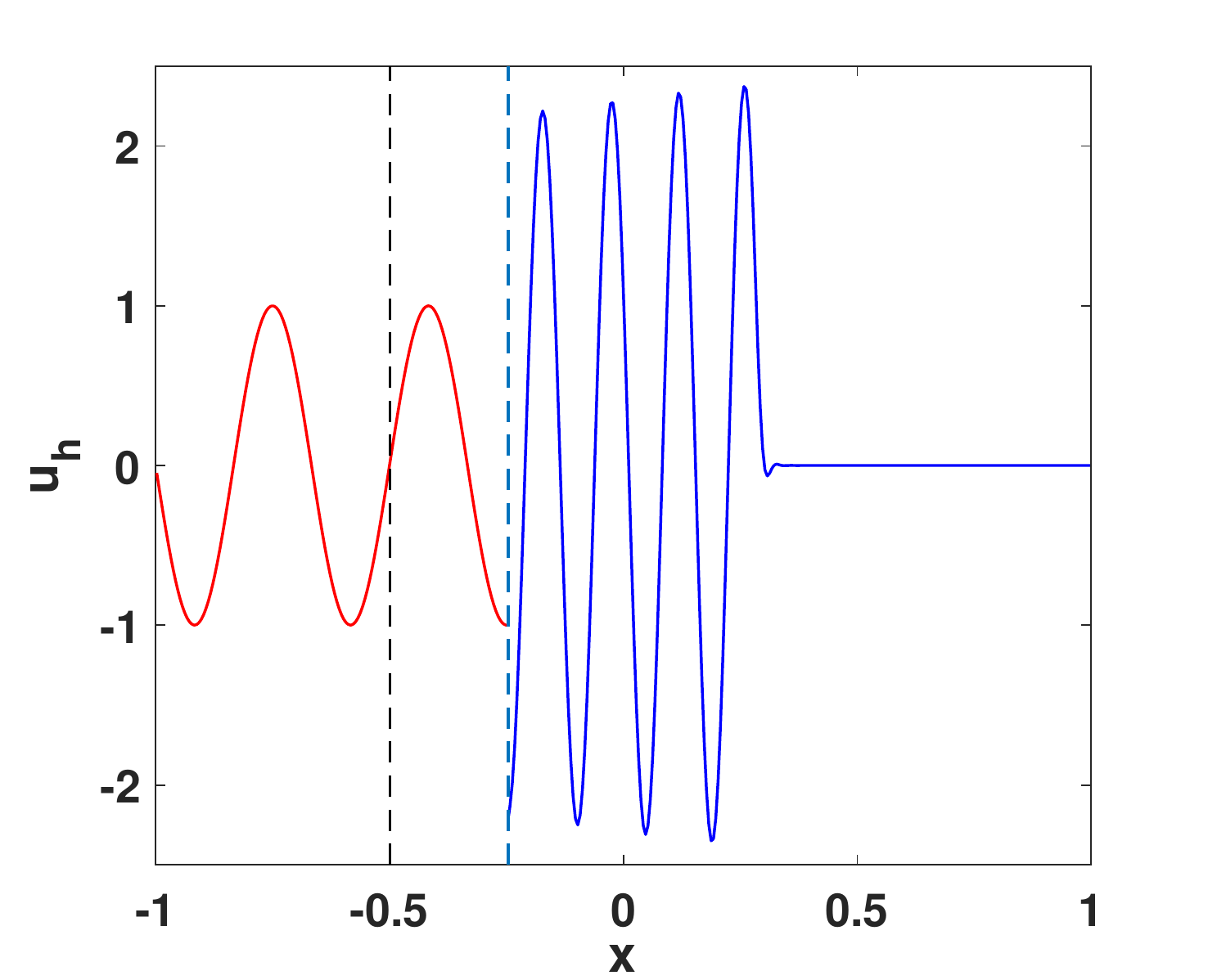}
\includegraphics[width=2.3in]{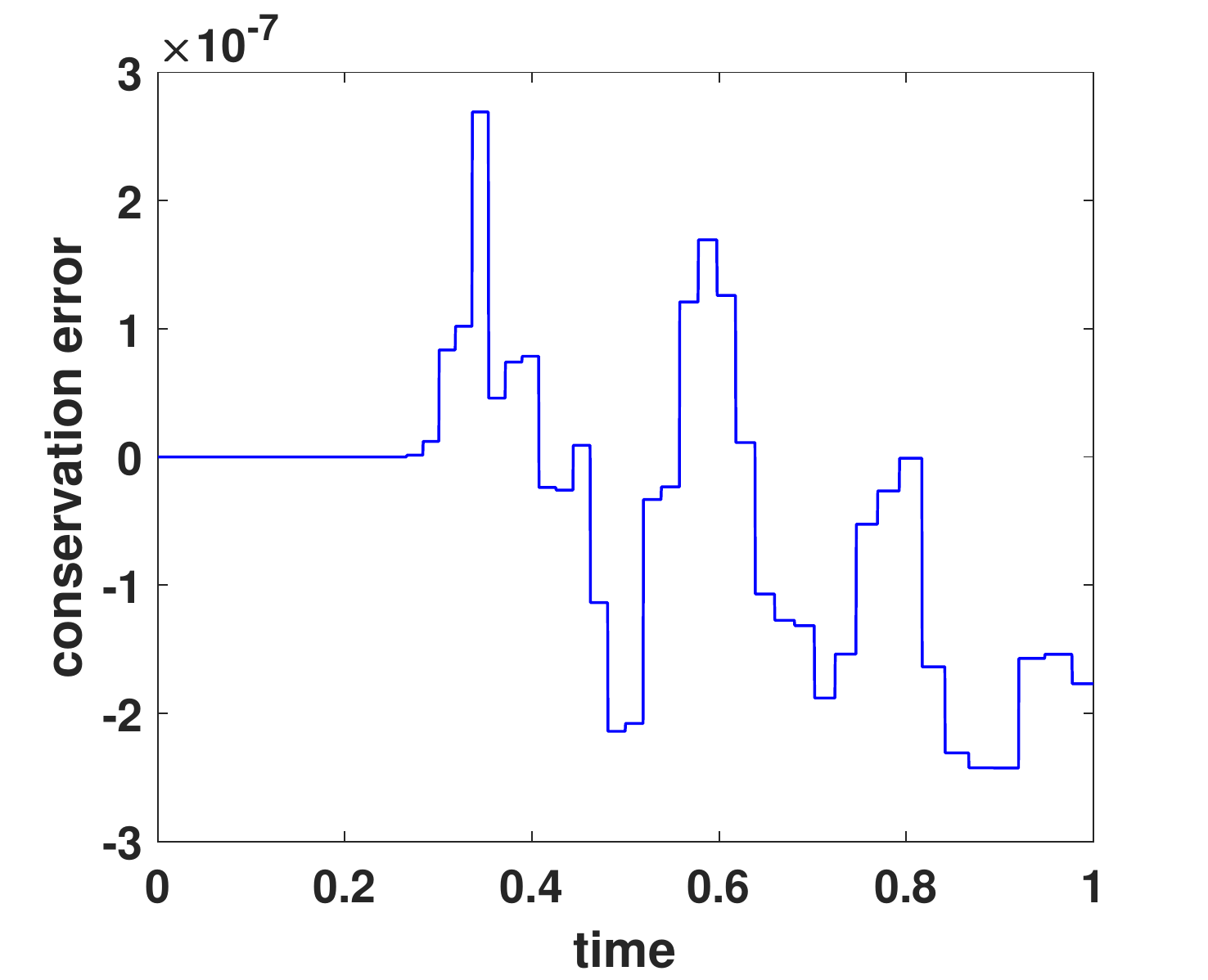}
\caption{Results for the  problem in Sect. \ref{sec:ex2moving}, solved by the space-time  method \eqref{scheme:move:cutDG1} with r=(1,1)  on a uniform background mesh with 400 elements in space. Left: $u_h$ at $t=1$, with initial and present interface positions indicated by dashed lines. Right: Conservation errors, (see Section \ref{sec:stationary:scalar:conservation}).
}
\label{fig:moving:spacetime1:p1p1:conservationerror}
\end{center}
\end{figure}

\subsection{A locally implicit method }
We now combine the proposed space-time CutFEM  with an explicit CutFEM. The space-time method is active in a neighbourhood of the interface and the explicit CutFEM method is applied away from the interface. For simplicity, we only consider the case $a_1-x_\Gamma'>0$, $a_2-x_\Gamma'>0$, and piecewise linear polynomials both in space and time.

Recall the sets $\Fhi^n$, $i=1,2$ from Section \ref{eq:SPmeshandspace}. We now let $\Omega_{l}$ be the subdomain containing the set of elements that have an edge in $\cup_{i} \Fhi^n$ and denote by $\Omega_{i, E}$ the remaining part of $\Omega_{i}$, i.e, the elements in $\Omega_i$ that are not in $\Omega_{l}$. Note that $\Omega_{i,E} \subset \Omega_i(t)$ for all  $t\in I^n$ and that no elements in $\Omega_{i,E}$ are cut by the interface during the time interval $I^n$.
In Fig. \ref{LocalImplicit} we illustrate how the space-time domain $I^n\times \Omega$ can be partitioned into the three parts, $I^n \times \Omega_l$, $I^n \times \Omega_{1,E}$, and  $I^n \times \Omega_{2,E}$.

\begin{figure}[tbhp]
\begin{center}
\includegraphics[width=2.8in]{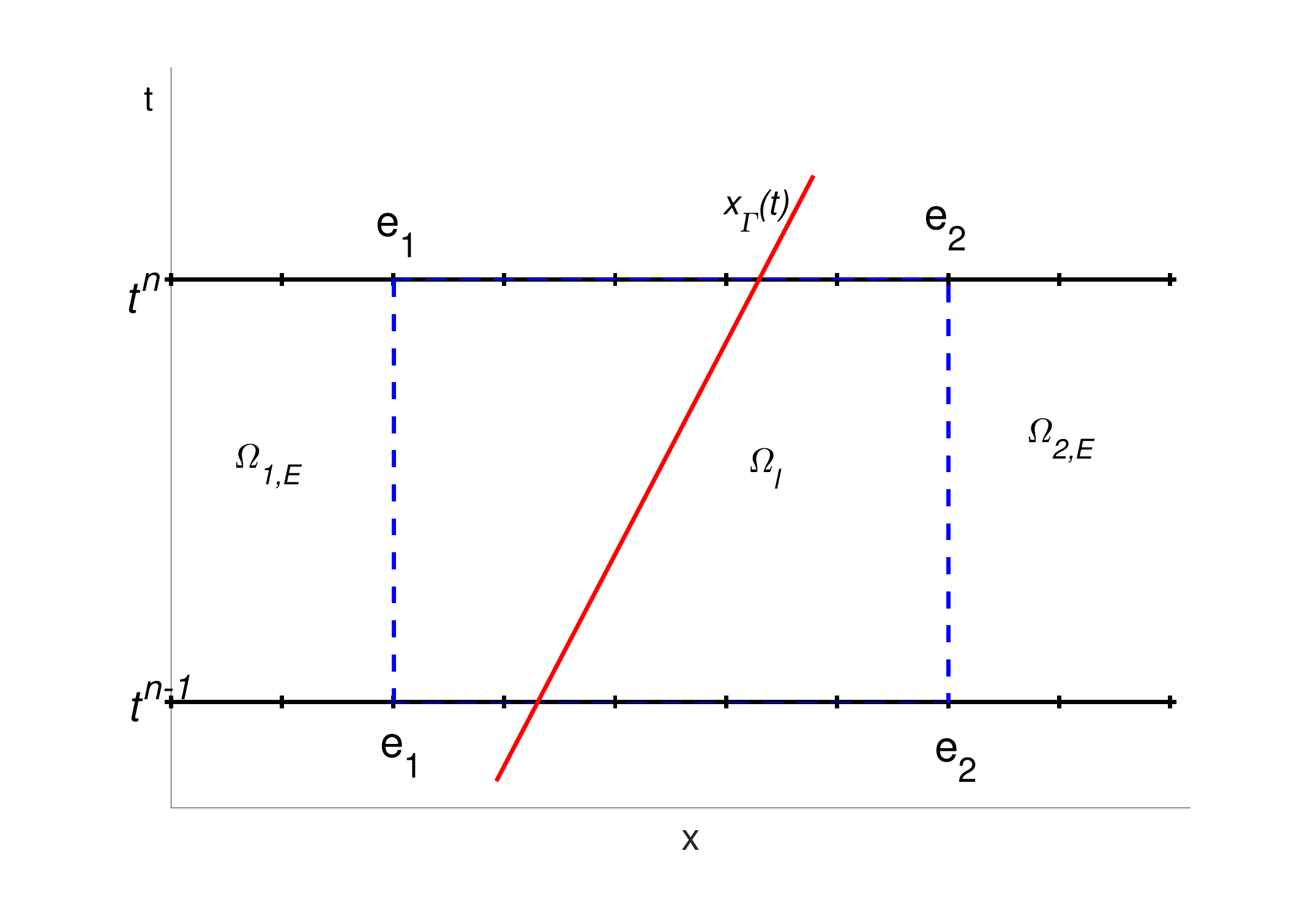}
\caption{The domains in the locally implicit scheme. }
\label{LocalImplicit}
\end{center}
\end{figure}

In regions away from the interface we want to apply a standard explicit DG method. Recall the mesh $\mt_{h}$ and the piecewise polynomial space $\widetilde{\mathcal{V}_h^1}$ defined in Section~\ref{sec:notapro}. 
We define the following  meshes and spaces restricted to $\Omega_{i,E}$,
\begin{equation}\label{eq: meshiR}
  \mt_{h,i}^E=\left\{I_j \in \mt_{h}: I_j \cap \Omega_{i,E} \neq \emptyset\right\}, 
\end{equation}
and
\begin{equation}
\VE=\widetilde{\mathcal{V}_h^1} |_{\mt_{h,i}^E}, \quad i=1,2.
\end{equation}
We now formulate a standard DG method with a two stage second order Runge-Kutta method: given $\hat{u}_{h,i}^{n-1}\in \VE$ find $\hat{u}_{h,i}^{(1)} \in \VE$ and $\hat{u}_{h,i}^{n} \in \VE$ such that
\begin{align}
&\left( \hat{u}_{h,i}^{(1)}-\hat{u}_{h,i}^{n-1},\hat{v}_h \right)_{\Omega_{i,E}}+\Delta t^n A_h(\hat{u}_{h,i}^{n-1},\hat{v}_{h})=0, \quad \forall \hat{v}_{h} \in \VE,  \label{eq:stage1}\\
&\left( \hat{u}_{h}^{n}-\frac{1}{2}(\hat{u}_{h}^{n-1}+\hat{u}_{h}^{(1)}), \hat{v}_{h} \right)_{\Omega_{i,E}}+\frac{\Delta t^n}{2} A_h(\hat{u}_{h}^{(1)},\hat{v}_{h}) =0, \quad \forall \hat{v}_{h} \in \VE,  \label{eq:stage2}
\end{align}
with
\begin{align}\label{scheme:Ah:time}
A_h(\hat{u}_{h,i},\hat{v}_{h})&=
- (F(\hat{u}_{h,i}),(\hat{v}_{h})_x)_{\Omega_{i,E}} -\sum_{i=1}^{2}\sum_{e\in\me_{h,i} \cap \Omega_{i,E}} \widehat{F}_e(\hat{u}_{h,i})[\hat{v}_{h}]_e,
\end{align}
and $\widehat{F}_e(\hat{u}_{h,i})$ as in~\eqref{eq:fluxe:state} at interior edges with $\lambda_e=|a_i|$ for $e\in\me_{h,i}$, $i=1,2$.

In the space-time slab $I^n \times \Omega_l$ we use the proposed space-time method. The active meshes and the spaces are defined exactly as in Section~\ref{eq:SPmeshandspace}, but with $\mt_{h,i}(t)$, $i=1,2$ restricted to $\Omega_l$. Thus, given $u_h^{n-1,-}$, the solution from the previous space-time slab, find $u_h \in \Vn $ such that
\begin{align}
&\sum_{i=1}^2\left((u_h^{n,-},v_h^{n})_{\Omega_l\cap \Omega_i(t^{n})} -(u_h^{n-1,-},v_h^{n-1})_{\Omega_l\cap \Omega_i(t^{n-1})}
-\int_{I^n} (u_h,(v_h)_t)_{\Omega_l\cap \Omega_i(t)} \ dt\right)
\notag\\
&+ \int_{I^n} a_h(u_h,v_h) \ dt  + \rA \int_{I^n}  J_0(u_h,v_h)  \ dt= 0, \quad \forall v_h \in \Vn,
\end{align}
with
\begin{align}
a_h(u_h,v_h) &=-\sum _{i=1}^2\left( (F(u_h),(v_h)_x)_{\Omega_l \cap \Omega_i(t)}  + \sum_{e\in\me_{h,i}(t)\cap\Omega_l} \widehat{F}_e(u_h) [v_h]_e \right)
\notag\\
&\quad
-\left([(F(u_h)-x_\Gamma'u_h)v_h]_{\Gamma}+[F(u_h)-x_\Gamma'u_h]_\Gamma[\lambda v_h]_{\Gamma} \right),
\end{align}
and $\lambda$ as in Theorem~\ref{thm:stabilitymovep}. We choose $r=(1,1)$.

Taking the test functions to be one in both schemes, i.e., $\hat{v}_h=1$, and $v_h=1$ and assuming for simplicity that the contributions at the physical boundary $x=x_L$ and $x=x_R$ vanish we have
\begin{align} \label{eq:vhat1}
&\sum_{i=1}^{2} \left( (\hat{u}_{h,i}^{n}, 1)_{\Omega_{i,E}}- (\hat{u}_{h,i}^{n-1}, 1)_{\Omega_{i,E}}
-\frac{\Delta t^n}{2} \left(\widehat{F}_{e_i}(\hat{u}_{h,i}^{n-1}) +\widehat{F}_{e_i}(\hat{u}_{h,i}^{(1)}) \right)  \right)=0,\\
&\sum_{i=1}^{2} \left(  (u_h^{n},1)_{\Omega_l\cap \Omega_i(t^{n})} -(u_h^{n-1,-},1)_{\Omega_l\cap \Omega_i(t^{n-1})}
-\int_{I^n} \widehat{F}_{e_i} (u_h)  \ dt \right)= 0,
\end{align}
where \eqref{eq:vhat1} is obtained by multiplying \eqref{eq:stage1} with 1/2 and adding to equation~\eqref{eq:stage2}.   Thus, for the locally implicit scheme to be conservative, we need the numerical fluxes at the edges $e_i$,  between $\Omega_{i,E}$ and $\Omega_l$ (see Fig. \ref{LocalImplicit}), to satisfy
\begin{align}\label{eq:conservfluxatFi}
\int_{I^n} \widehat{F}_{e_i}(u_h) dt&=\frac{\Delta t^n}{2}\left(\widehat{F}_{e_i}(\hat{u}_{h,i}^{n-1})+\widehat{F}_{e_i}(\hat{u}_{h,i}^{(1)})\right), \quad i=1,2.
\end{align}

Taking into account boundary conditions, the flux's direction, as well as condition~\eqref{eq:conservfluxatFi}, we choose the fluxes at the domain's boundaries and edges $e_1,e_2$ as
\begin{align*}
\widehat{F}_L(\hat{u}_{h,1}^{n-1})&= F(g(t^{n-1})),
&& \widehat{F}_L(\hat{u}_{h,1}^{(1)})=F(g(t^{n-1}+\Delta t^n)), \\
\widehat{F}_R(\hat{u}_{h,2}^{n-1})&= F(\hat{u}_{h,i}^{n-1}(x_R,t^{n-1})),
&& \widehat{F}_R(\hat{u}_{h,2}^{(1)})=F(\hat{u}_{h,i}^{(1)}(x_R,t^{n-1})),\\
\widehat{F}_{e_1}(\hat{u}_{h,1}^{n-1})&={F}(\hat{u}_{h,1}^n(x_{e_1},t^{n-1})),
&& \widehat{F}_{e_1}(\hat{u}_{h,1}^{(1)})={F}(\hat{u}_{h,1}^{(1)}(x_{e_1},t^{n})), \\
\widehat{F}_{e_2}(\hat{u}_{h,2}^{n-1})&= F_{e_2}(u_h^{n-1,+}),
&&  \widehat{F}_{e_2}(\hat{u}_{h,2}^{(1)})=F_{e_2}(u_h^{n,-}),\\
\int_{I^n} \widehat{F}_{e_1}(u_h) dt&= \frac{\Delta t^n}{2}\left(\widehat{F}_{e_1}(\hat{u}_{h,1}^{n-1})
+\widehat{F}_{e_1}(\hat{u}_{h,1}^{(1)})\right),\\
\int_{I^n} \widehat{F}_{e_2}(u_h) dt &= \frac{\Delta t^n}{2}\left({F}_{e_2}(u_h^{n-1,+})+{F}_{e_2}(u_h^{n,-}) \right).
\end{align*}
The method can straightforwardly be used in a computation by time-stepping first in $\Omega_{1,E}$, then in $\Omega_l$, and finally in $\Omega_{2,E}$. The first and last steps are explicit, while the middle step is implicit.

\subsubsection{Numerical examples}
We now test the accuracy and conservation of the locally implicit method.  Consider first the same example as in Section~\ref{sec:ex1moving} with a time step size $\Delta t^n=\Delta t=h/12$. Table~\ref{table:localimplicit:accuracy} shows the expected second order accuracy in the $L^2$-norm. In $L^\infty$-norm  convergence is slower.
We have also solved this problem on finer meshes, $N=1280,2560,5120$, and we observe that the convergence is slower than the optimal second order convergence. 
 When a smaller time step is used the degradation of convergence rate occurs at a finer grid, indicating that the problem is related to the discretization of time. Since both the fully implicit and the standard method work well, we conclude that the problem comes from the coupling.
\begin{table}[!bhtp]
\caption{\label{table:localimplicit:accuracy} {
 Errors and orders of accuracy at $t=0.1$ for the  problem  in \ref{sec:ex1moving} with a moving interface when using the locally implicit method. The uniform  background mesh has $N$ elements in space.
} }
\begin{tabular}{ccccccc}
\hline\noalign{\smallskip}
N &{$L^{1} $} error & order &${L^2} $ error & order&${L^\infty} $ error & order\\
\noalign{\smallskip}\hline\noalign{\smallskip}
20	&	1,29E-01	&	-	&	1,46E-01	&	-	&	4,47E-01	&	-	\\
40	&	3,43E-02	&	1,91	&	3,91E-02	&	1,90	&	1,39E-01	&	1,68	\\
80	&	8,63E-03	&	1,99	&	1,00E-02	&	1,96	&	3,75E-02	&	1,89	\\
160	&	2,17E-03	&	1,99	&	2,56E-03	&	1,97	&	9,65E-03	&	1,96	\\
320	&	5,40E-04	&	2,01	&	6,43E-04	&	1,99	&	3,30E-03	&	1,55	\\
\hline\noalign{\smallskip}
\end{tabular}
\end{table}

Next we solve the example in Section~\ref{sec:ex2moving} and simulate this problem up to time $t=1$ with $N=400$ uniform elements in  the background mesh, and time step $\Delta t^n=h/12$.  We show the numerical solution in the left of Fig.~\ref{fig:move:localimplicit:p1p1}. We see that the locally implicit scheme can simulate this problem well and captures the discontinuity at the interface.
We measure the conservation error by $e(t)$
using the inflow information based on the numerical integration used in the scheme. We show the conservation error in the right of Fig.  \ref{fig:move:localimplicit:p1p1}. The conservation error is of the order of machine epsilon.
\begin{figure}[tbhp]
\begin{center}
\includegraphics[width=2.3in]{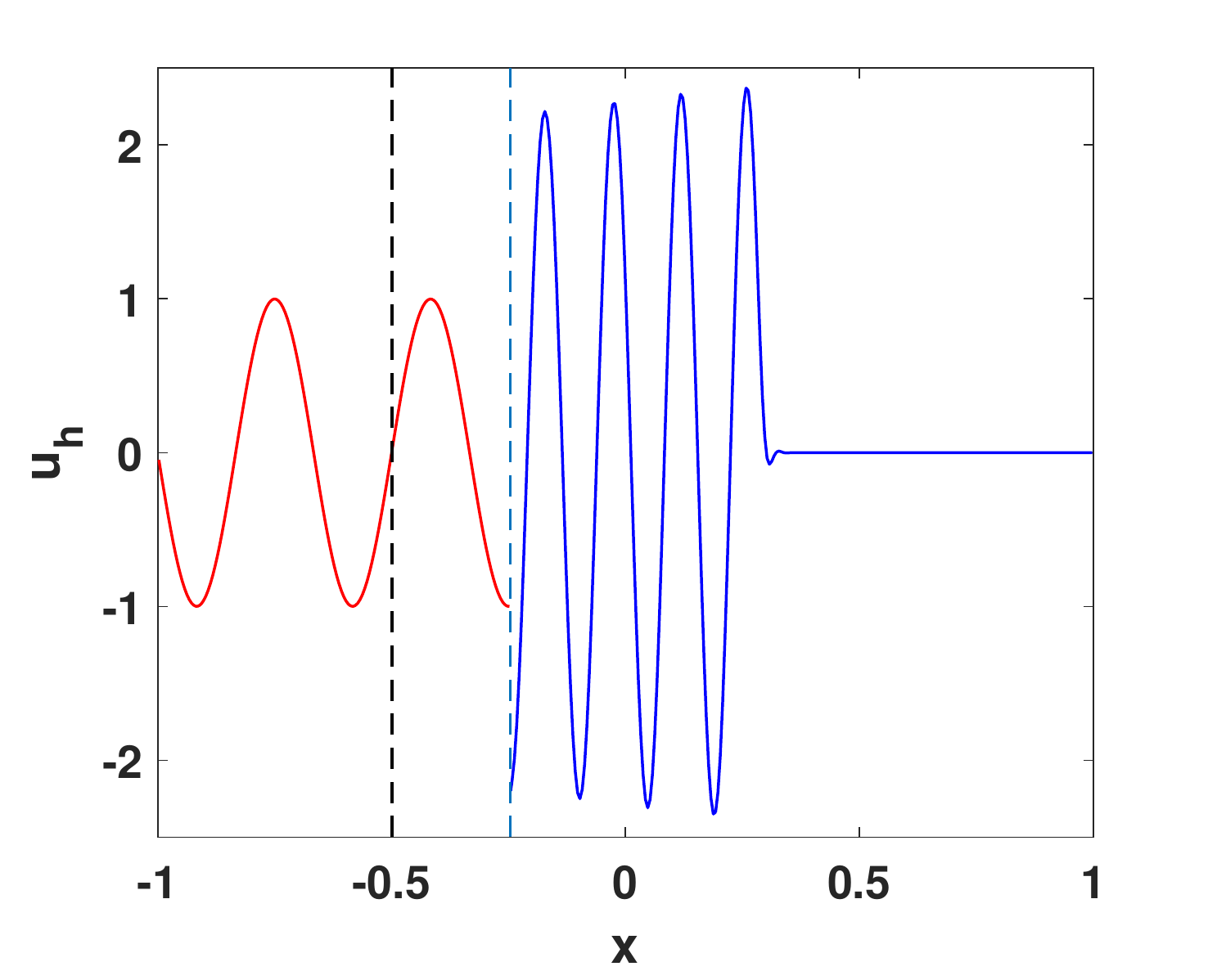}
\includegraphics[width=2.3in]{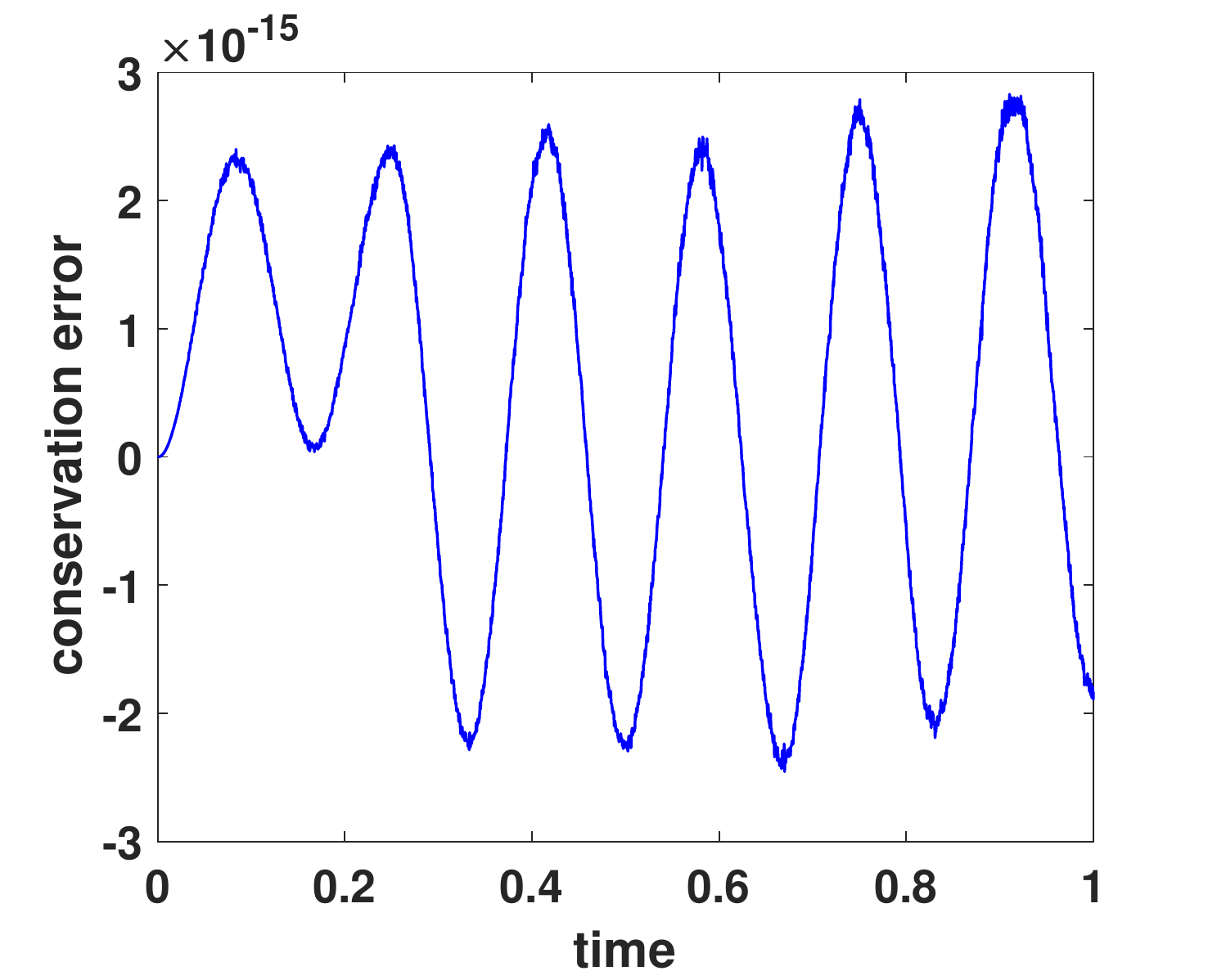}
\caption{Results for the  problem with a moving interface in Sect. \ref{sec:ex2moving}, discretized by the locally implicit method on a uniform background mesh with 400 elements in space. Left: solution $u_h$ at $t=1$. Right: conservation error $e(t)$.
}
\label{fig:move:localimplicit:p1p1}
\end{center}
\end{figure}

\section{Extension to two space dimensions}
\label{sec:2d}
Let $\Omega$ be a bounded convex domain in $\R^2$, with polygonal boundary $\partial \Omega$ and let $\Gamma$ be a smooth internal boundary that separates the domain $\Omega$ into two subdomains $\Omega_1$ and $\Omega_2$ such that $\bar{\Omega} = \bar{\Omega}_1 \cup \bar{\Omega}_2$.  Consider the hyperbolic conservation law
\begin{alignat}{2}
&u_{t}(x,y,t)+\div \F(u(x,y,t))=0, && \quad {(x,y) \in \Omega_1\cup \Omega_2,  \quad t \in (0,T]}  \label{eq:model2D} \\
&{u(x,y,0)=f(x,y),} &&  \quad   {(x,y) \in \Omega_1\cup \Omega_2,} \label{eq:initialcond2D} \\
&[\F(u(x,y,t))\cdot\n]_\Gamma=0,  &&  \quad   {(x,y) \in \Gamma},
\label{eq:interfacecond2D}
\end{alignat}
together with suitable boundary conditions.
Here $\n$ is the unit normal vector of $\Gamma$, $\F(u)=\mathbf{a} u$,  $\mathbf{a}=\mathbf{a}_1$ in $\Omega_1$ and $\mathbf{a}=\mathbf{a}_2$ in $\Omega_2$. Only a stationary interface is considered.

\subsection{The finite element method}
Let $\mt_h$ be a quasi-uniform simplicial mesh of the domain $\Omega$ generated independently of the position of the interface $\Gamma$ and let
$\widetilde{\V}$ be the finite element space on $\mt_h$ consisting of piecewise  polynomials of degree at most r. We define the active meshes $\mt_{h,i}$, 
the set of edges in each active mesh $\me_{h,i}$, and the set $\Fhi$ where the stabilization is applied, $i=1,2$,  as in Section \ref{sec:notapro} but now $I_j$ denotes a triangle in $\mt_h$. The active finite element spaces are
\begin{equation}
\Vi=\widetilde{\V}|_{\mt_{h,i}}, \, i=1,2.
\end{equation}
and we let $\V=\mathcal{V}_{h,1}^r \times \mathcal{V}_{h,2}^r$. 
Given the initial condition we find $u_h(0)=u_h(\cdot,0) \in \V$ such that
\begin{equation}
(u_h(x,y,0),v_h)_{\Omega_1\cup\Omega_2}+\gamma_MJ_1(u_h(x,y,0),v_h)=(f(x,y),v_h)_{\Omega_1\cup\Omega_2},  \quad \forall v_h \in \V.
\end{equation}
We propose the following weak formulation: Find $u_h(\cdot, t) \in \V$ such that for almost all $t \in (0,T]$
\begin{align}
\left( (u_h)_t,v_h \right)_{\Omega_1\cup\Omega_2}+\gamma_MJ_1((u_h)_t,v_h)+a_h(u_h,v_h)+\gamma_AJ_0(u_h,v_h)=0,
\end{align}
for $\forall v_h \in \V$.
Here
\begin{align}\label{scheme:ah2D}
a_h(u_h,v_h)=&- (\F(u_h), \nabla v_h)_{\Omega_1\cup\Omega_2} -\sum_{i=1}^{2}\sum_{e\in\me_{h,i}} (\{ \F(u_h) \cdot \n \}_e, [v_h]_e)_e-(\frac{\lambda_e}{2}[u_h]_e , [v_h]_e)_e \nonumber \\
&-\int_\Gamma \left( [\F(u_h) \cdot \n v_h]_\Gamma+[\F(u_h)]_\Gamma[\lambda v_h]_\Gamma \right) \ ds,
\end{align}
and
\begin{equation}\label{stab2D}
J_s(u_h, v_h)=\sum_{i=1}^2\sum_{e \in \Fhi} \sum_{k=0}^{r} \omega_kh^{2 k+s}\left( \left[\partial^ku_{h,i}\right]_e,\left[\partial^{k} v_{h,i}\right]_{e} \right)_e.	
\end{equation}

\subsection{Numerical example}
Let $\Omega=[x_{\min}, x_{\max}] \times [y_{\min}, y_{\max}]$ and the interface $\Gamma$ be the line $x+y=c_0$ where $c_0 $ is a constant. The subdomain $\Omega_1=\{ (x,y) \in \Omega: x+y\leq c_0\}$, $\Omega_2=\{ (x,y) \in \Omega: x+y\geq c_0\}$. We choose $x_{\min}=y_{\min}=-1$, $x_{\max}=y_{\max}=1$, and $c_0 > x_{\max}+y_{\min}$.
The unit normal to $\Gamma$ is  $\n=\frac{1}{\sqrt{2}}(1,1)$ and the time step is chosen as $\Delta t =  \frac{0.5h}{ (2r+1)\max_\Omega(|\mathbf{a}|)}$ .
The boundary conditions are
\begin{align}
&u(x_{\min},y,t)=g(x_{\min},y,t),\\
&u(x,y_{\min},t)=g(x,y_{\min},t),\\
&\text{outflow BC on the remaining boundaries.}
\end{align}

\subsubsection{Convergence study} \label{convergence study 2D}
Let $\mathbf{a}_1=(3,1)$ and $\mathbf{a}_2=(2,1)$ and set $c_0=0.5$. A solution to equation \eqref{eq:model2D} is
\begin{align}\label{eq: example2D1}
&u_1(x,y,t)=\sin(\pi(x+y-4t)), \quad (x,y)\in \Omega_1,\\
\label{eq: example2D2}
&u_2(x,y,t)=\frac{4}{3}\sin(4/3\pi(x+y-3t-c_0/4)), \quad (x,y)\in \Omega_2,
\end{align}
and with $g = u_1$ it satisfies the boundary conditions.
Note that on the interface, $x+y=c_0$, we have $u_1=\sin(\pi(c_0-4t))$ and $u_2=\frac{4}{3}\sin(4/3\pi(3/4c_0-3t))$. Thus, the solution given by Eq.  \eqref{eq: example2D1}-\eqref{eq: example2D2} satisfies the interface condition \eqref{eq:interfacecond2D}.

We solve the problem on a uniform mesh until $t=1$ with a time step $\Delta t =0.5 h / ((2r+1) \sqrt{10})$ for $r=1,2$, (i.e. P1, P2 elements) and $h = 2/N_x$, with $N_x=20,40,80,160,300$. In Fig. \ref{fig:convergence2D} we show that the $L^2$-error versus mesh size $h$. The convergence order of the method follows the optimal order $r+1$.

\begin{figure}[h] 	
	\centering	
	\includegraphics[width=2.8in]{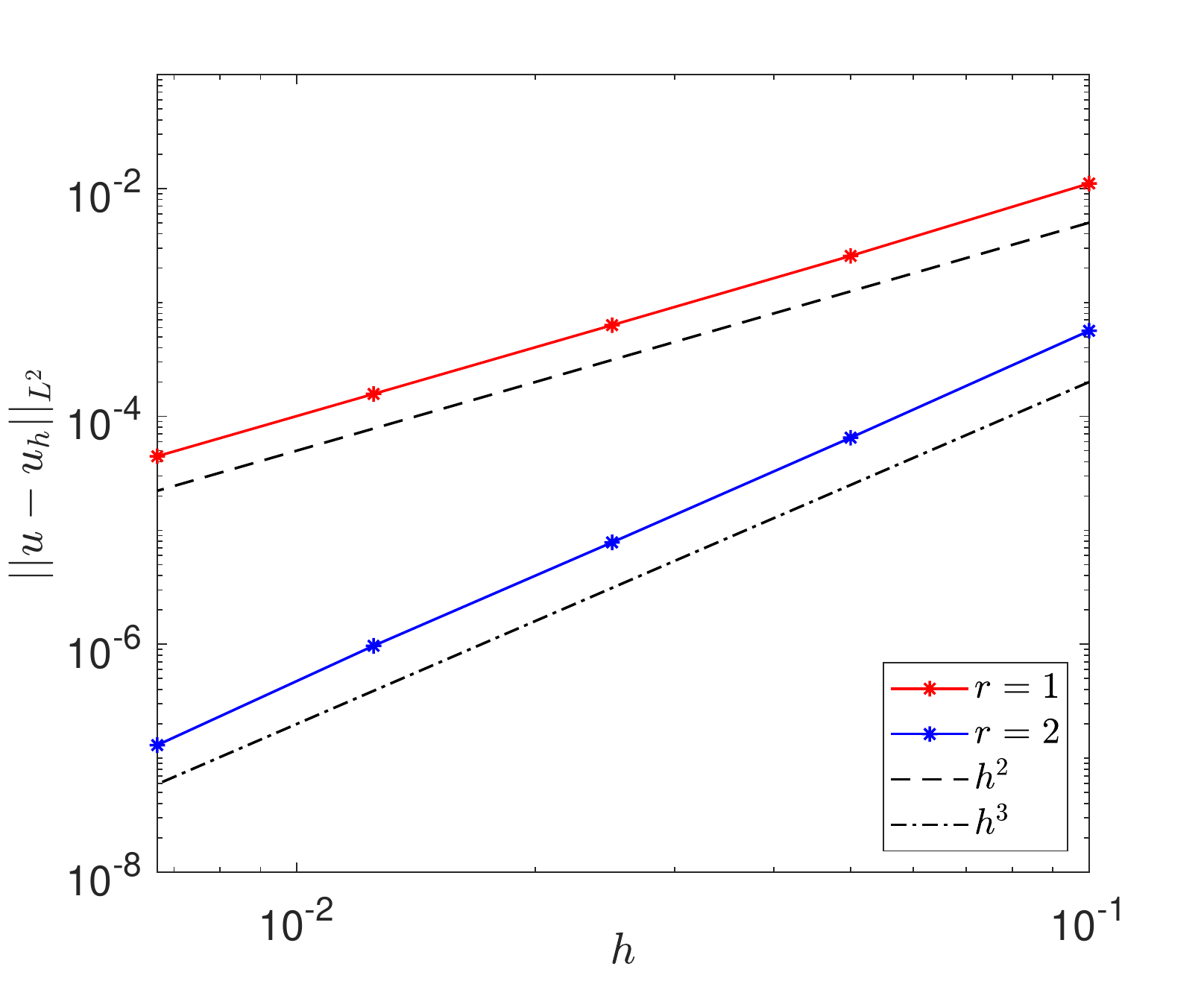}
	\caption{The $L^2$-error at t = 1 versus mesh size $h$ for the problem in Sect.  5.2.1 in two space dimensions. Polynomials of degree  r=1, 2 are used and convergence order r+1 is obtained.
 \label{fig:convergence2D}}
\end{figure}

\subsubsection{Conservation study}
Let $\mathbf{a}_1=(3,1)$ and $\mathbf{a}_2=(1,2)$ and set $c_0=0.25$ and denote $\mathcal{C}$ the circle with center $(-0.3,-0.3)$ and radius $0.3$. We consider the initial condition
\begin{align}
f(x,y)= & \left\{ \begin{array}{ll}
         1 & \mbox{if $(x,y) \in \mathcal{C}$,}\\
          0 & \mbox{else},
        \end{array} \right.
  \nonumber
\end{align}
and boundary data $g=0$.
We solve this problem using the proposed scheme with $r=1$ on a uniform mesh where $N_x = N_y = 200$ and a time step defined as above.
We use two different sets of  penalty parameters $\lambda_1$ and $\lambda_2$ where the first set satisfies the conservation condition \eqref{eq:conservcondlamb} while the second set does not.
In Fig. \ref{fig:conservation2Dvizu} we show the numerical solution at different time instances with $\lambda_i$, $i=1,2$ satisfying the conservation condition. The mass concentrated in the circle $\mathcal{C}$ is away from the interface initially, but evolves and passes through the interface.
In Fig. \ref{fig:conservation2D}  we show that when penalty parameters satisfy \eqref{eq:conservcondlamb}, the method is conservative. In contrast, if the conservation condition is not satisfied, the conservation error can be large and increases significantly when the part with a mass reaches and passes through the interface. Note that the exact solution is not smooth, but with condition \eqref{eq:conservcondlamb} the method is still conservative.

\begin{figure}[h]
    \subfigure[$t=0$]
    	{
    	\scalebox{0.4}    	
	\centering	
	\includegraphics[width=1.5in]{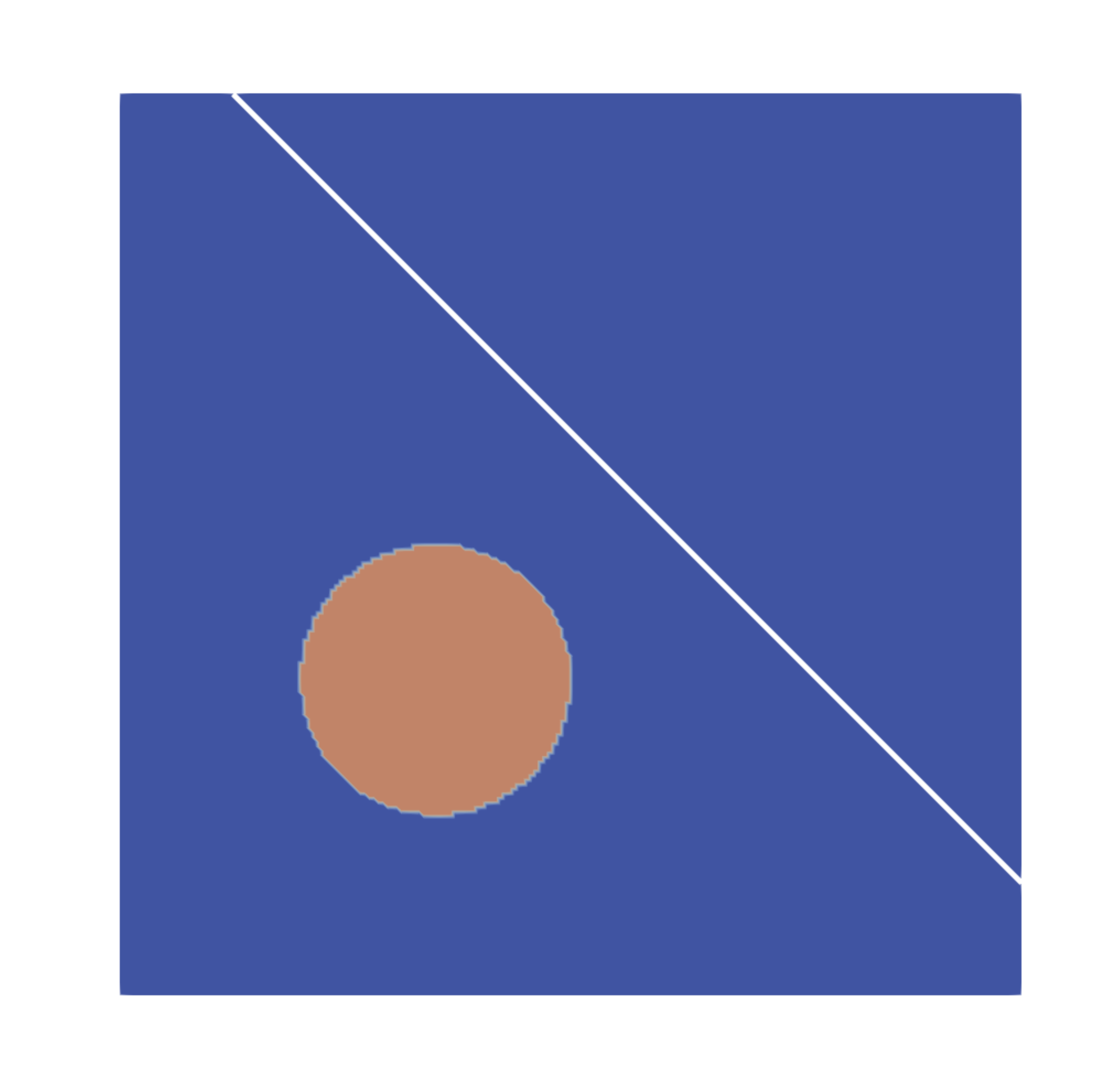}
        }
          \subfigure[$t=0.2$]
    	{
    	\scalebox{0.4}    	
	\centering	
	\includegraphics[width=1.5in]{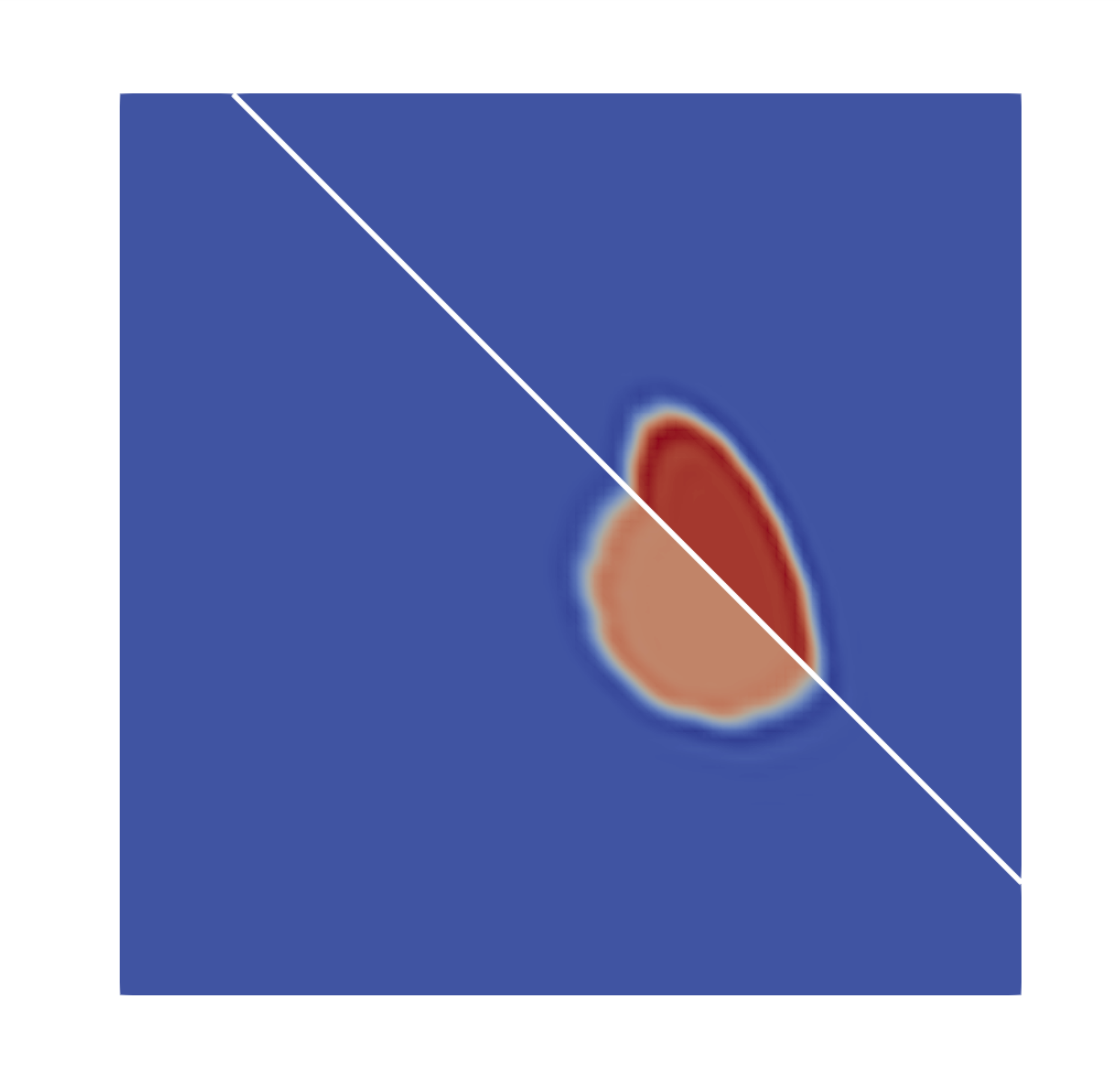}
        }
        \subfigure[$t=0.4$]
    	{
    	\scalebox{0.4}    	
	\centering	
	\includegraphics[width=1.5in]{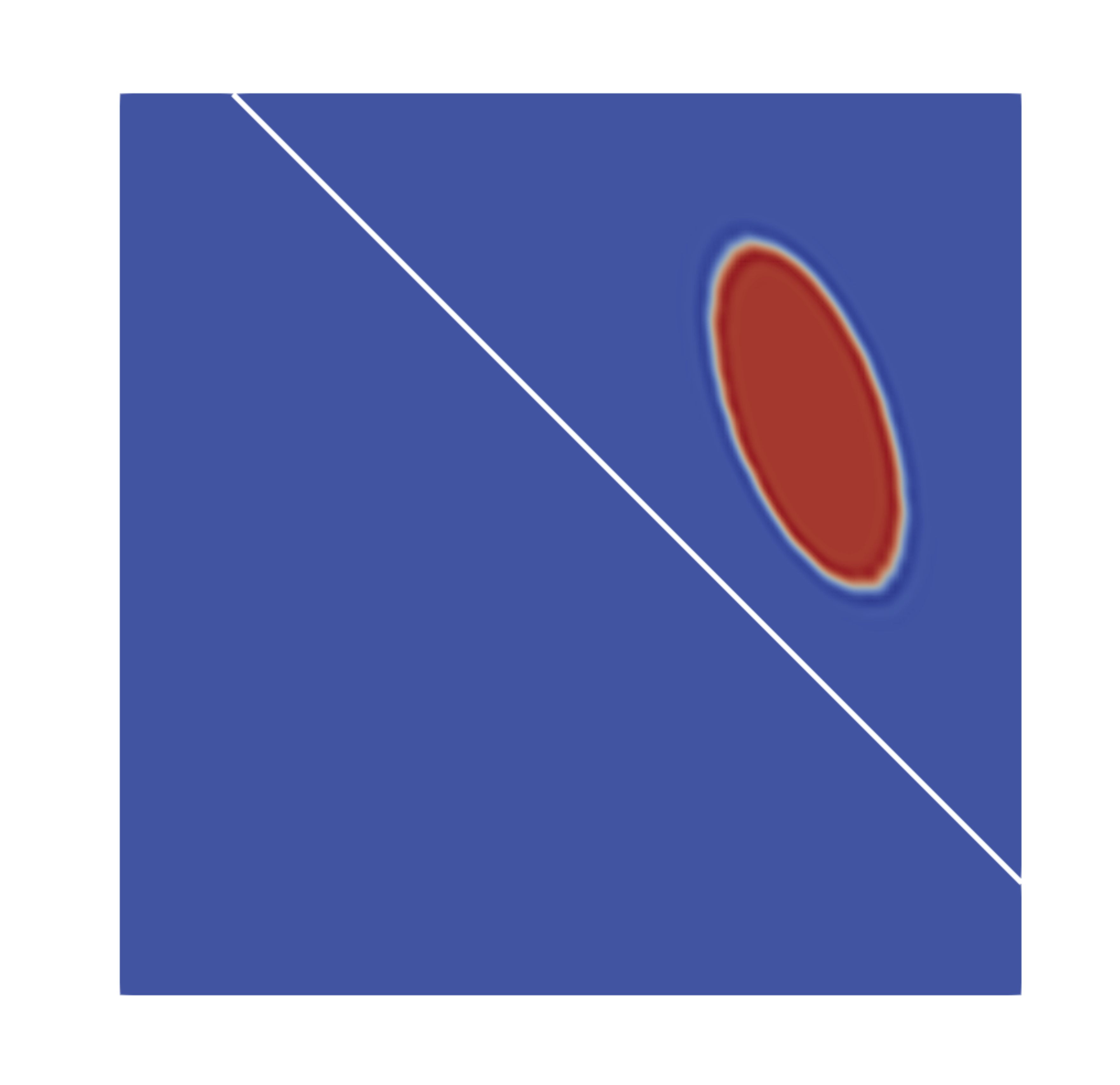}
        }      	
        \caption{Solution obtained at different time instances for $r=1$. Here we have chosen $\lambda_1 = 0$ and $\lambda_2 = -1$. The white line is the interface $\Gamma$. Blue corresponds to $u=0$, orange to $u=1$ and red to $u=4/3$.}
        \label{fig:conservation2Dvizu}
\end{figure}

\begin{figure}[h] 	
	\centering	
	\includegraphics[width=2.8in]{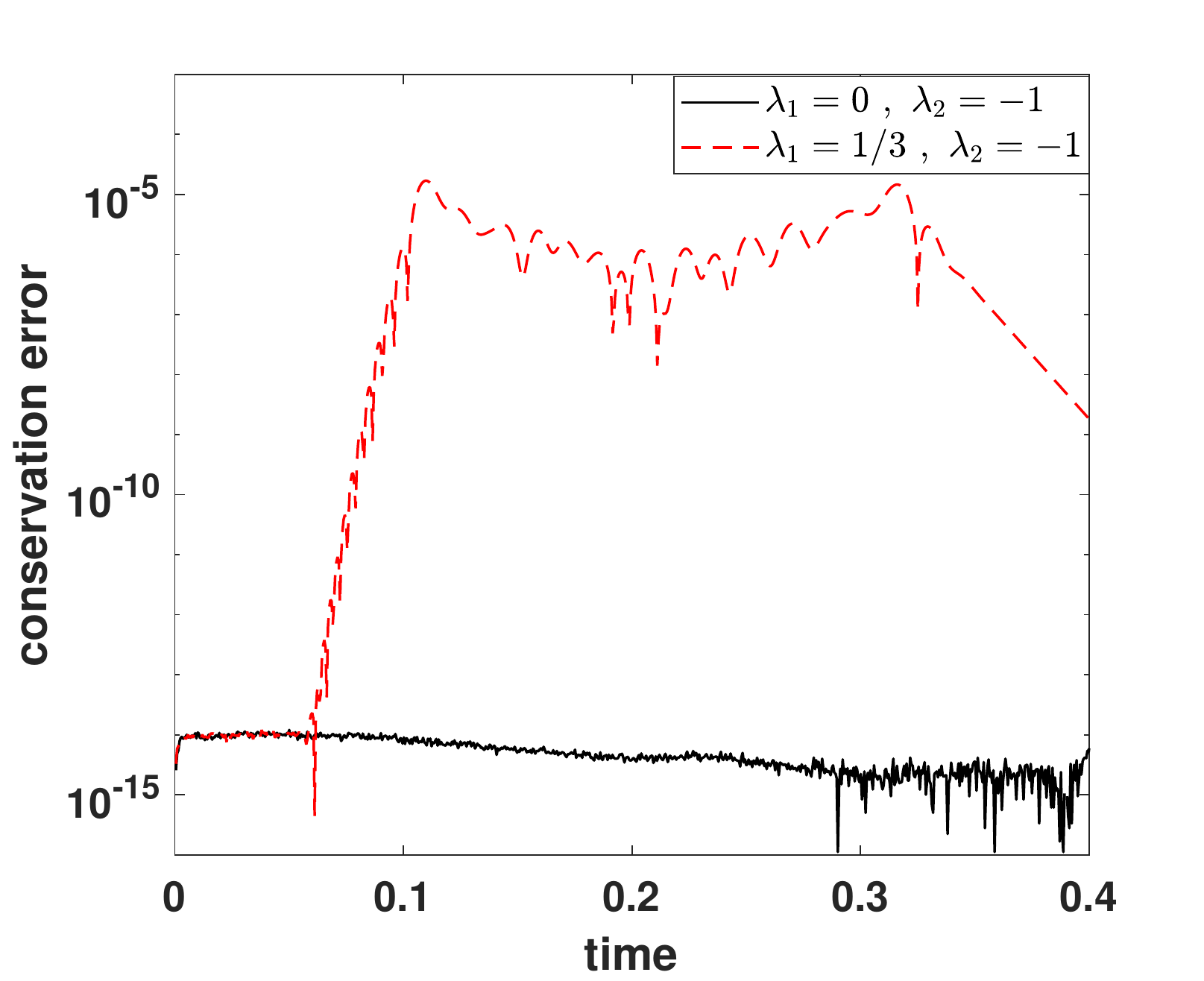}
	\caption{Comparison of the conservation error for different choices of $\lambda_i$.\label{fig:conservation2D}}
\end{figure}

\section{Conclusion}
\label{sec:conclusion}

We have presented two high order CutFEM based on the DG framework, applicable to conservation laws with discontinuous coefficients in the flux across stationary and moving interfaces, respectively. Our methods use standard DG-elements, but do not require the elements to be aligned with interfaces where coefficients are discontinuous. Ghost penalty stabilization is included in the weak forms to allow for similar time-step restrictions as in the standard DG approach.
We have established discrete conservation, accuracy and stability for the methods. The proposed methods are described and analyzed in one-dimensional settings, but we also present computations in two dimensions for a stationary interface case, which demonstrates that the methodology can directly be extended to higher dimension.

The first method is based on a method of lines approach, and is an extension of the method in ~\cite{fu2021high} to handle stationary material interfaces.  The mass matrix appearing in the semi-discrete system is  block-diagonal with most blocks as in the standard DG approach, but with one larger diagonal block, which couples the degrees of freedom associated with elements in the vicinity of the interface. The non-diagonal entries are caused partly by the contributions from integrals in the cut region,   
 and partly by the ghost penalty stabilization, which couples elements cut by the interface with their neighbours.  In one space dimension this coupling is not a big issue. However, one can produce a block-diagonal matrix with less coupling, in particular in higher dimension, by applying stabilization restrictively as proposed in~\cite{larson2021conservative}.
Our focus in this work has been on the interface treatment
and we developed CutFEM that are globally conservative. By changing the stabilization to the macro element stabilization in~\cite{larson2021conservative} our method would also preserve the local conservation property of the discontinuous Galerkin formulation on macro elements.

The second method is for moving interfaces. It is a space-time CutFEM based on discontinuous elements in both space and time. The method is stable and conservative, but implicit. In particular we have shown, numerically and analytically, that using a weak form based on integration by parts in time is essential for discrete conservation. Since we are using standard DG techniques as building blocks we believe that the extension of this method to multiple space dimensions is also straightforward. The implicit character of our space-time CutFEM is however a drawback.
We demonstrate in a scalar case how a more efficient method can be achieved by using the space-time elements only locally in the vicinity of interfaces. This idea moves the difficulty from the non-aligned moving interface to a stationary aligned interface, where the space-time elements need to be coupled to standard method of lines DG methods while maintaining stability, accuracy and conservation.

In computations we have observed that the temporal accuracy is sometimes degraded in the coupled case, and more work is required to understand and avoid this degradation. We also believe that a more difficult extension is to generalize the coupling between space-time elements and the standard method-of lines DG methods to systems and to multi-dimensions, while maintaining stability, accuracy and conservation, and allowing for explicit time-stepping in large parts of the domain.

\appendix
\begin{appendix}
\section{Positivity of the ${S}$-matrix}\label{appendix:proof}
In this appendix,  we show that under the conditions in Theorem \ref{thm:stabilityscalarp} the matrix $S$ in \eqref{eq:matrixAs} is positive semi-definite.
\begin{lemma}\label{lemma:matrixA}
Consider
\begin{align}
S=\left(
\begin{array}{cc}
   (\frac{1}{2}-\lambda_1)a_1 & \frac{a_2\lambda_1+ a_1\eta \lambda_2}{2}   \\
\frac{a_2\lambda_1+a_1\eta\lambda_2}{2}  & -(\lambda_2+\frac{1}{2})\eta a_2
   \end{array}\right),
   \label{def:energy:S}
   \end{align}
 where $\lambda_1=\lambda_2+1$ and $a_1,a_2$ are either strictly positive or strictly negative. There exists positive $\eta$ such that the matrix $S$ is positive semi-definite under the condition
\begin{align}\label{eq:state:parametercondition}
\left\{\begin{array}{ll}
{\lambda_1\leq\frac{1}{2},\lambda_2\leq-\frac{1}{2},}&{\text{if } a_1>0,a_2>0,}\\
{\lambda_1\geq\frac{1}{2},\lambda_2\geq-\frac{1}{2},}&{\text{if } a_1<0,a_2<0.}\\
\end{array}\right.
\end{align}
\end{lemma}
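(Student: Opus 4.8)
The plan is to avoid any existence argument via discriminants and instead to simply exhibit a working value, namely $\eta = a_2/a_1$, which is positive exactly because $a_1$ and $a_2$ are assumed to have the same sign. With this choice I would verify positive semi-definiteness of $S$ through the two elementary criteria for a symmetric $2\times2$ matrix: that both diagonal entries are nonnegative and that the determinant is nonnegative (equivalently, that the eigenvalues $\tfrac12(\operatorname{tr}S\pm\sqrt{(\operatorname{tr}S)^2-4\det S})$ are nonnegative).

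The first key step is the algebraic identity that makes the hypotheses readable: since $\lambda_1=\lambda_2+1$ we have $\tfrac12-\lambda_1=-(\lambda_2+\tfrac12)$; denote this common number by $\mu$. Then the diagonal entries of $S$ are $\mu a_1$ and $\mu\eta a_2 = \mu a_2^2/a_1$, and condition \eqref{eq:state:parametercondition} says precisely that $\mu\ge 0$ in the case $a_1,a_2>0$ and $\mu\le 0$ in the case $a_1,a_2<0$; in either regime $\mu$ has the same sign as $a_1$, so both diagonal entries are nonnegative and $\operatorname{tr}S\ge0$. The second step is to compute the off-diagonal entry and the determinant at $\eta=a_2/a_1$: then $a_1\eta=a_2$, so $\tfrac12(a_2\lambda_1+a_1\eta\lambda_2)=\tfrac12 a_2(\lambda_1+\lambda_2)$ and
\[
\det S \;=\; \mu^2 a_2^2 \;-\; \tfrac14 a_2^2(\lambda_1+\lambda_2)^2 \;=\; a_2^2\bigl(\mu^2 - \tfrac14(\lambda_1+\lambda_2)^2\bigr).
\]
Finally, $\lambda_1+\lambda_2=2\lambda_2+1=-2\mu$, so $\tfrac14(\lambda_1+\lambda_2)^2=\mu^2$ and hence $\det S=0$. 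A symmetric $2\times2$ matrix with nonnegative diagonal entries and vanishing determinant has eigenvalues $0$ and $\operatorname{tr}S\ge0$, so $S$ is positive semi-definite, which concludes the argument.

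I do not expect a genuine obstacle: the only thing to get right is the sign bookkeeping between $\mu$, $a_1$ and $a_2$ across the two regimes of \eqref{eq:state:parametercondition}, and the identity $\lambda_1+\lambda_2=-2\mu$, which is what forces $\det S=0$ for the chosen $\eta$. The one point worth flagging is that a priori one might fear the choice of $\eta$ has to be made delicately (e.g.\ via a discriminant computation showing some quadratic in $\eta$ becomes nonpositive on an interval meeting $(0,\infty)$); the content of the lemma is that, thanks to $\lambda_1=\lambda_2+1$, the naive scaling $\eta=a_2/a_1$ already lands on the boundary $\det S=0$. As a consistency check one may note that in the borderline case $\lambda_1=\tfrac12,\lambda_2=-\tfrac12$ this same $\eta$ gives $S=0$, matching the energy-neutral conservative interface treatment used for the acoustic system.
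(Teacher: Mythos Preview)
Your proof is correct and considerably more direct than the paper's. The paper introduces the same parameter $\sigma=\tfrac12-\lambda_1=-(\lambda_2+\tfrac12)$ (your $\mu$), but then carries out a full eigenvalue analysis: it computes $\operatorname{tr}S$ and $\det S$ as functions of $\eta$, $\sigma$, and $\beta=a_1/a_2$, and determines the entire interval of $\eta$ for which both eigenvalues are nonnegative, obtaining in the case $a_1,a_2>0$ the range $\tfrac{(\lambda_2+1)^2}{\lambda_2^2}\le\eta\beta\le1$ (with the inequalities reversed when $a_1,a_2<0$). Your choice $\eta=a_2/a_1$ is exactly the endpoint $\eta\beta=1$ of this interval, which is why your determinant collapses to zero. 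The paper's route buys a complete description of admissible $\eta$, potentially useful when one later wants flexibility in the energy weight (cf.\ the error estimate in Appendix~\ref{proofoferror}, which depends on $\eta$); your route buys a clean three-line verification that avoids any discriminant or case analysis on the sign of $\sigma$, which is all the lemma as stated requires.
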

\begin{proof}
To reduce the number of parameters we rewrite
 the conservation condition \eqref{eq:conservcondlamb} as
\begin{equation}\label{eq:flux2}
\lambda_2+\frac{1}{2}=\lambda_1-\frac{1}{2},
\end{equation}
and introduce
\begin{equation}\label{eq:flux2}
\sigma\equiv-\lambda_2-\frac{1}{2}=-\lambda_1+\frac{1}{2}.
\end{equation}
In terms of $\sigma$ we have
\begin{equation}
S=\left(
\begin{array}{cc}
   a_1\sigma & -\frac{a_2+\eta a_1}{2}\sigma+\frac{a_2-\eta a_1}{4}   \\
-\frac{a_2+\eta a_1}{2}\sigma+\frac{a_2-\eta a_1}{4}  & a_2\eta\sigma
   \end{array}
\right).
\end{equation}
To investigate if $S$ is positive semi-definite we study the eigenvalues of $S$. 
The eigenvalues, $\theta$, satisfy
\begin{align}
|S-\theta I|
&=\theta^2-tr(S)\theta+\det(S)=0,
\end{align}
where $tr(S)=a_2\sigma(\beta+\eta)$ with $\beta=\frac{a_1}{a_2}$, and
\begin{align}
\det(S)
&=\frac{a_2^2}{4}(1-\beta\eta)\left( (1+\beta\eta){\sigma}-(1-\beta\eta)\left({\sigma^2}+\frac{1}{4}\right) \right).
\end{align}
By assumption $\beta,\eta>0$. We have
\begin{equation}
\theta_{1,2}=\frac{a_2}{2}\sigma(\beta+\eta)\pm\sqrt{\frac{a_2^2\sigma^2(\beta+\eta)^2}{4}-\det(S)}.
\end{equation}
Both eigenvalues are nonnegative precisely if equivalently
\begin{equation} \label{eq:conddetA}
a_2\sigma\geq 0 \text{ and } \det(S)\geq 0.
\end{equation}

When $\sigma>0$ we have from \eqref{eq:conddetA} that  $a_2>0$, and
\begin{align}
0\leq 1-\eta \beta \leq \frac{2\sigma}{(\sigma+1/2)^2},
\end{align}
or equivalently $1-\frac{2\sigma}{(\sigma+1/2)^2}\leq\eta \beta\leq 1$. Hence,  given $a_1>0, a_2>0, \beta=a_1/a_2>0$ and parameters $\lambda_1<1/2, \lambda_2<-1/2$ with $\lambda_2-\lambda_1+1=0$,  matrix $S$ is positive semi-definite under the condition
\begin{align}\label{eq:energy:gamma1}
\frac{(\lambda_2+1)^2}{\lambda_2^2}\leq \eta \beta\leq 1.
\end{align}

When $\sigma<0$ we have from \eqref{eq:conddetA} that $a_2<0$ and
\begin{align}
0\geq 1-\eta \beta \geq \frac{2\sigma}{(\sigma+1/2)^2},
\end{align}
or equivalently $1-\frac{2\sigma}{(\sigma+1/2)^2}\geq\eta \beta\geq 1$. This means, given $a_1<0,a_2<0, \beta=a_1/a_2>0$, and parameters $\lambda_1>1/2,\lambda_2>-1/2$ with $\lambda_2-\lambda_1+1=0$,  matrix $S$ is  positive semi-definite when
\begin{align}\label{eq:energy:gamma2}
\frac{(\lambda_2+1)^2}{\lambda_2^2}\geq \eta \beta\geq 1.
\end{align}
When $\sigma =0$, that is $\lambda_1=1/2$ and $\lambda_2=-1/2$, we need to have $\det(S)=0$ in order for \eqref{eq:conddetA} to be satisfied.  We have that $\det(S)=0$ if $\eta \beta=1$ for both $a_i>0$ and $a_i<0$.
Thus, there always exist positive $\eta$ under the condition \eqref{eq:state:parametercondition} such that $S$ is positive semi-definite and the energy  $E_\eta$ is non-increasing.
\end{proof}

Next, we will show a stability condition of $\lambda_i$ in the scheme \eqref{scheme:state:DG} without the conservation condition \eqref{eq:conservcondlamb}.
\begin{lemma}\label{lemma:matrix:stable}
Consider
\begin{align}
S=\left(
\begin{array}{cc}
   (\frac{1}{2}-\lambda_1)a_1 & \frac{a_2\lambda_1+ a_1\eta \lambda_2}{2}   \\
\frac{a_2\lambda_1+a_1\eta\lambda_2}{2}  & -(\lambda_2+\frac{1}{2})\eta a_2
   \end{array}\right),
   \label{def:energy:S}
   \end{align}
 where $a_1,a_2$ are either strictly positive or strictly negative. There exists positive $\eta$ such that the matrix $S$ is positive semi-definite under the condition $\lambda_1-\lambda_2\geq\frac{1}{2}$ and
\begin{align}\label{eq:state:parametercondition}
\left\{\begin{array}{ll}
{\lambda_1\leq\frac{1}{2},\lambda_2\leq-\frac{1}{2},}&{\text{if } a_1>0,a_2>0,}\\
{\lambda_1\geq\frac{1}{2},\lambda_2\geq-\frac{1}{2},}&{\text{if } a_1<0,a_2<0.}\\
\end{array}\right.
\end{align}
\end{lemma}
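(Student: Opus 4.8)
The plan is to follow the strategy used for Lemma~\ref{lemma:matrixA}: since $S$ is a symmetric $2\times 2$ matrix, $S$ is positive semi-definite precisely when $\operatorname{tr}(S)\ge 0$ and $\det(S)\ge 0$, equivalently when both diagonal entries are nonnegative and $\det(S)\ge 0$. The diagonal is immediate: $S_{11}=(\tfrac12-\lambda_1)a_1$ and $S_{22}=-(\lambda_2+\tfrac12)\eta a_2$ have, for any $\eta>0$, signs fixed by the signs of $a_1,a_2$, and the sign conditions \eqref{eq:state:parametercondition} are exactly what makes $S_{11}\ge 0$ and $S_{22}\ge 0$ in both the case $a_1,a_2>0$ and the case $a_1,a_2<0$. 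So the entire content of the lemma is to show that $\eta>0$ can be chosen so that $\det(S)\ge 0$.

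Next I would introduce, as in the conservative case, the shifted parameters $\sigma_1=\tfrac12-\lambda_1$ and $\sigma_2=-\tfrac12-\lambda_2$, which are nonnegative when $a_1,a_2>0$ (and nonpositive when $a_1,a_2<0$). Expanding $\det(S)=S_{11}S_{22}-S_{12}^2$ and collecting powers of $\eta$ yields a quadratic $A\eta^2+B\eta+C$ with $A=-\tfrac14 a_1^2\lambda_2^2<0$, $C=-\tfrac14 a_2^2\lambda_1^2\le 0$, and $B=a_1a_2\bigl(\sigma_1\sigma_2-\tfrac12\lambda_1\lambda_2\bigr)$. Thus $\eta\mapsto\det(S)$ is a downward parabola that is nonpositive at $\eta=0$, so it is nonnegative exactly on the interval $[\eta_-,\eta_+]$ between its (possibly coincident) roots, when those are real. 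The key computation is the discriminant: a short algebraic manipulation gives
\[
B^2-4AC=a_1^2a_2^2\,\sigma_1\sigma_2\,(\sigma_1\sigma_2-\lambda_1\lambda_2),
\]
and the elementary identity $\sigma_1\sigma_2-\lambda_1\lambda_2=\tfrac12\bigl(\lambda_1-\lambda_2-\tfrac12\bigr)$ shows that, since $\sigma_1\sigma_2\ge 0$ under the sign conditions, the discriminant is nonnegative if and only if $\lambda_1-\lambda_2\ge\tfrac12$, precisely the hypothesis. Hence real roots exist.

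It then remains to check that the admissible interval $[\eta_-,\eta_+]$ meets $(0,\infty)$. The product of the roots equals $C/A=a_2^2\lambda_1^2/(a_1^2\lambda_2^2)\ge 0$, while the sum equals $-B/A$, which has the sign of $a_1a_2\bigl(\sigma_1\sigma_2-\tfrac12\lambda_1\lambda_2\bigr)$. Using $\sigma_1\sigma_2\ge 0$ together with the identity above and the constraint $\lambda_1-\lambda_2\ge\tfrac12$, I would verify that $\sigma_1\sigma_2-\tfrac12\lambda_1\lambda_2\ge 0$ — splitting according to the sign of $\lambda_1$ (for $\lambda_1\ge 0$ both terms are nonnegative; for $\lambda_1<0$ one uses $|\lambda_2|\ge|\lambda_1|+\tfrac12$) — so both roots are real and nonnegative. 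Choosing $\eta=\eta_+>0$ then gives $\det(S)\ge 0$ and hence $S\succeq 0$; the case $a_1,a_2<0$ follows by the analogous argument, or more quickly by replacing $a_i$ with $-a_i$ and relabelling, which carries the sign conditions onto those of the positive case. The main obstacle I expect is this last sign bookkeeping, in particular confirming that $\eta$ can be taken strictly positive: the only configuration in which the admissible $\eta$-interval degenerates to $\{0\}$ is the boundary point $\lambda_1=0$, $\lambda_2=-\tfrac12$, which should be treated separately or excluded by a mild strictness in the hypotheses.
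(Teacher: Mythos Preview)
Your approach is essentially the same as the paper's: both reduce positive semi-definiteness to $\operatorname{tr}(S)\ge 0$ (immediate from the sign hypotheses on $\lambda_1,\lambda_2$) and $\det(S)\ge 0$, treat the determinant as a quadratic in the weight (the paper uses $\beta\eta=a_1\eta/a_2$, you use $\eta$), and factor the discriminant as a positive multiple of $(\lambda_1-\lambda_2-\tfrac12)(2\lambda_1-1)(\lambda_2+\tfrac12)$. Your flag on the boundary point $(\lambda_1,\lambda_2)=(0,-\tfrac12)$ is well taken and in fact sharper than the paper's own argument --- there the admissible interval for $\eta$ really does collapse to $\{0\}$, a corner case the paper's proof writes down the interval for but does not address.
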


\begin{proof}
We will investigate when $S$ is a positive semi-definite matrix by studying the eigenvalues of S. Without the condition $\lambda_2-\lambda_1+1=0$, similarly as the above proof, we have $tr(S)=\left(\frac{1}{2}-\lambda_1\right)a_1-\left(\lambda_2+\frac{1}{2}\right)a_2\eta$ and
\begin{align}
\text{det}(S)=\left(\lambda_1-\frac{1}{2}\right)\left(\lambda_2+\frac{1}{2}\right)a_1a_2\eta-\frac{(a_2\lambda_1+a_1\eta\lambda_2)^2}{4}.
\end{align}
The eigenvalues are nonnegative if we have $tr(S)\geq0,\det(S)\geq 0$.
By assumption $\beta=\frac{a_1}{a_2}>0$, if $a_2>0$, $tr(S)\geq0,\det(S)\geq 0$ are equal to
\begin{align}\label{ieq:trs:pos}
\left(\frac{1}{2}-\lambda_1\right)\beta\geq\left(\lambda_2+\frac{1}{2}\right)\eta,
\end{align}
and
\begin{align}\label{ieq:lemma2:delta}
2\left(\lambda_1-\lambda_2-\frac{1}{2}\right)\beta\eta\geq (\lambda_1-\beta\eta\lambda_2)^2 \Longleftrightarrow\lambda_2^2\beta^2\eta^2-2\beta\eta\left(\lambda_1-\lambda_2-\frac{1}{2}+\lambda_1\lambda_2\right)+\lambda_1^2\leq0.
\end{align}
With $\lambda_1\leq\frac{1}{2},\lambda_2\leq-\frac{1}{2}$, the inequality \eqref{ieq:trs:pos} holds for any $\eta>0$.
Therefore, to ensure that there exists $\eta>0$ such that \eqref{ieq:lemma2:delta} is satisfied, we need
\begin{align}
\Delta:=\left(\lambda_1-\lambda_2-\frac{1}{2}\right)\left(\lambda_1-\lambda_2-\frac{1}{2}+2\lambda_1\lambda_2\right)
=\left(\lambda_1-\lambda_2-\frac{1}{2}\right)\left(2\lambda_1-1\right)\left(\lambda_2+\frac{1}{2}\right)\geq0,
\end{align}
With $\lambda_1\leq\frac{1}{2},\lambda_2\leq\frac{1}{2}$ and $\lambda_1-\lambda_2\geq\frac{1}{2}$, it follows that $\Delta\geq 0$. Then there exits an $\eta>0$ with
\begin{align}
\max\left\{0,\frac{\left(\lambda_1-\lambda_2-\frac{1}{2}+\lambda_1\lambda_2\right)-\sqrt{\Delta}}{\lambda_2^2}
\right\}\leq\beta\eta
\leq\frac{\left(\lambda_1-\lambda_2-\frac{1}{2}+\lambda_1\lambda_2\right)+\sqrt{\Delta}}{\lambda_2^2}.
\end{align}
This shows that when $a_1,a_2>0$ a sufficient condition for the scheme to be stable are $\lambda_1\leq \frac{1}{2}$, $\lambda_2\leq -\frac{1}{2}$, and $\lambda_1-\lambda_2\geq \frac{1}{2}$.

If $a_1,a_2<0$, we first need  $tr(S)\leq0$, that is
\begin{align}
\left(\frac{1}{2}-\lambda_1\right)\beta\leq\left(\lambda_2+\frac{1}{2}\right)\eta.
\end{align}
Then, we need $\det(S)\geq0$  as in \eqref{ieq:lemma2:delta}.
Thus, we need
\begin{align}
\left(\lambda_1-\lambda_2-\frac{1}{2}\right)
\left(2\lambda_1-1\right)\left(\lambda_2+\frac{1}{2}\right)\geq0.
\end{align}
If $\lambda_1\geq\frac{1}{2},\lambda_2\geq-\frac{1}{2}$ and $\lambda_1-\lambda_2-\frac{1}{2}\geq0$, there always exists $\eta>0$ such that
\begin{align}
\max\left\{0,\frac{\left(\lambda_1-\lambda_2-\frac{1}{2}+\lambda_1\lambda_2\right)-\sqrt{\Delta}}{\lambda_2^2}
\right\}\leq\beta\eta\leq\frac{\left(\lambda_1-\lambda_2-\frac{1}{2}+\lambda_1\lambda_2\right)+\sqrt{\Delta}}{\lambda_2^2},
\end{align}
and the scheme is stable.

\end{proof}

\section{Proof of Theorem 2}\label{proofoferror}
In this appendix, we give the proof of the error estimate in Theorem 2 but first we briefly recall some useful inequalities. For $s \geq0$, let $\|\cdot\|_{s, \omega}$ and $|\cdot |_{s, \omega}$ denote the usual norm and semi-norm of Sobolev space $H^{s}(\omega)$, respectively and $\|\cdot\|_{s,\mt_{h}}^{2}=\sum_{T \in \mathcal{T}_{h}}\|\cdot\|_{s, T}^{2}$. For $s=0$, the norm $\|\cdot\|_{0, \omega}$ and  $|\cdot |_{0, \omega}$ is the standard $L^2$-norm and we often use the notation $\| \cdot \|_\omega$.
We will use the notation $a \lesssim b \Leftrightarrow a \leq C b$ with $C$ being some constant that is independent of the mesh parameter $h$ and how the interface cuts the mesh  $\mt_h$.

Let $T \in \mathcal{T}_{h}$, the following trace inequalities hold
\begin{align}
\|v\|_{\partial T} & \lesssim h^{-1 / 2}\|v\|_{T}+h^{1 / 2}| v |_{1,T},  \ \forall   v \in H^{1}(T),  \label{eq:traceH1}
\\
\|v\|_{\Gamma \cap T} & \lesssim h^{-1 / 2}\|v\|_{T}+h^{1 / 2} | v |_{1,T}, \ \forall  v \in H^{1}(T), \label{eq:traceH1G}
\end{align}
where the first is a standard trace inequality and the second is proven in \cite{hansbo2003finite}. We also have
the following inverse inequality \cite{brenner2008mathematical},
\begin{align}\label{inverse:discrete1}
\left |v_{h}\right | _{j,T} & \lesssim h^{s-j}\left | v_{h}\right |_{s,T}, \ \forall v_{h} \in \V,  \ 0 \leq s \leq j, \quad T \in \mathcal{T}_{h}.
\end{align}
Following \cite{brezzi2004discontinuous} we use the $L^{2}$-projection. For $i=1,2$, let $\pi_{h,i}: L^{2}\left(\mt_{h,i} \right) \rightarrow \Vi$ denote the $L^{2}$-projection onto $\Vi$.  For all $v_i \in H^{r+1}\left(\mt_{h,i} \right)$ we recall the following standard estimate
\begin{align}\label{interpolation}
\left\|v_i-\pi_{h,i} v_i \right\|_{k,T} &\lesssim h^{r+1-k}\|v_i\|_{r+1, T}, \     T \in \mt_{h,i},
\end{align}
where $k=0, 1, \cdots, r+1$.  
We also need extension operators that can extend functions defined in $\Omega_i$ to $\mt_{h,i}$. The extension theorem in \cite{stein2016singular} provides continuous extension operators $E_i: H^{s}(\Omega_i)  \rightarrow H^{s}\left( \R ^d \right)$ such that for all  $v_i \in H^{s}(\Omega_i)$, $E_i v_i |_{\Omega_i} = v_i$ and
\begin{equation}\label{eq:stabextension}
\left\| E_i v_i \right\|_{s, \mathbb{R}^{d}} \lesssim \|v_i\|_{s, \Omega_i}, \quad  i=1,2, \ s\geq0.
\end{equation}
We now define an extension operator $E$ such that for all $v \in \oplus_{i=1}^2 H^s(\Omega_i)$,  $Ev= (E_1 v_1, E_2 v_2)$ and we denote $Ev$ by $v^e$.  Using the $L^{2}$-projection $\pi_{h,i}$ and the extension operators we define the following projection operator
\begin{equation}
\pi_h:\oplus_{i=1}^2 L^2(\mt_{h,i} ) \ni (v_1,v_2) \mapsto (\pi_{h,1} E_1 v_1, \pi_{h,2} E_2 v_2) \in \V.
\end{equation}
We are now ready to prove Theorem 2.
\begin{proof}
Note that $u=u_i$ for $x \in \Omega_i$, is the solution to problem \eqref{eq:model}-\eqref{eq:interfacecond} with $F(u)$ as in \eqref{eq:linear:state} and $x_\Gamma'(t)=0$, and is sufficiently smooth: $u_i \in L^{\infty}\left([0,T] ; H^{r+1}({\Omega_i}) \right)$ and $(u_i)_{t} \in L^{\infty}([0,T] ;$ $ {H^{r+1}({\Omega_i})}  )$.  For $t\in (0,T]$ we have from consistency that $u^e(t)=u^e(\cdot,t)$, the extension of exact solution $u$, satisfies
\begin{align}\label{scheme:state:u}
\left( u^e_t,v_h \right)_{\Omega_1\cup\Omega_2}+\gamma_MJ_1(u^e_t,v_h)+A_h(u^e,v_h)=0, \, \forall v_h\in\V.
\end{align}
Further, $u_h(t)=u_h(\cdot, t)\in\V$, is the solution to \eqref{scheme:state:DG}. Subtracting \eqref{scheme:state:u} from \eqref{scheme:state:DG}, we get the error equation
\begin{align}
\label{scheme:state:erroreq}
\left( (u^e-u_h)_t,v_h \right)_{\Omega_1\cup\Omega_2}+\gamma_M J_1((u^e-u_h)_t,v_h)+A_h(u^e-u_h,v_h)=0, \, \forall v_h\in\V.
\end{align}
We write the error as a sum of two terms $u^e-u_h=\xi+\zeta$,  where
$\xi=(\xi_1,\xi_2)$, with $\xi_i(x,t)=u^e_i(x,t)-\pi_{h,i}E_i u_{i}(x,t)$ and $\zeta=(\zeta_1,\zeta_2)$ with $\zeta_i(x,t)=(\pi_{h,i} E_i u_{i}(x,t)-u_{h,i}(x,t))$ $\in\Vi$, $i=1,2$. Rewriting the error equation \eqref{scheme:state:erroreq} in terms of $\xi$ and $\zeta$ we get
\begin{align}\label{scheme:state:erroreq1}
\left( (\xi+\zeta)_t,v_h \right)_{\Omega_1\cup\Omega_2}+\gamma_MJ_1((\xi+\zeta)_t,v_h)+A_h(\xi+\zeta,v_h)=0, \, \forall v_h\in\V.
\end{align}
Let $v_h=(\zeta_1,\eta \zeta_2)$  in \eqref{scheme:state:erroreq1}. Defining a weighted energy similar to \eqref{eq:define:energy}, i.e.,
$$E_{\eta}^\zeta(t)= \frac{1}{2} \left(||\zeta_1||^2_{\Omega_1}+\gamma_M J_1(\zeta_1,\zeta_1)\right)+\frac{\eta}{2}\left(||\zeta_2||^2_{\Omega_2}+\gamma_MJ_1(\zeta_2,\zeta_2)\right),$$
and following the stability analysis in Section \ref{sec:stability} we get
\begin{align}
\frac{d}{dt}E_{\eta}^\zeta(t)=&
-\mathbf{\zeta_\Gamma}^T S \mathbf{\zeta_\Gamma}-\sum_{e\in\me_{h,1}}\frac{|a_1|}{2}[\zeta_1]_e^2-\sum_{e\in\me_{h,2}}\frac{|a_2|\eta}{2}[\zeta_2]_e^2 -\gamma_A J_0(\zeta_1,\zeta_1)-\eta \gamma_A J_0(\zeta_2,\zeta_2)\notag\\
&-\left(\int_{\Omega_1} (\xi_{1})_t\zeta_1 dx + \gamma_M J_1((\xi_{1})_t,\zeta_1) \right)-{\eta} \left(\int_{\Omega_2}(\xi_2)_t\zeta_2 dx + \gamma_M J_1((\xi_2)_t,\zeta_2) \right) \notag\\
&-\gamma_A J_0(\xi_1,\zeta_1)-\eta \gamma_A J_0(\xi_2,\zeta_2)\notag\\
&+\int_{\Omega_1}a_1\xi_1(\zeta_1)_xdx+\sum_{e\in\me_{h,1}}a_1\xi_{1,e}^{-}[\zeta_1]_e
-(1-\lambda_1)a_1\xi_{1,\Gamma}\zeta_{1,\Gamma}
-\lambda_1a_2\xi_{2,\Gamma}\zeta_{1,\Gamma}\notag\\
&+\eta\left(\int_{\Omega_2}a_2\xi_2(\zeta_2)_xdx+\sum_{e\in\me_{h,2}}a_2\xi_{2,e}^{-}[\zeta_2]_e
+(1+\lambda_2)a_2\xi_{2,\Gamma}\zeta_{2,\Gamma}
-\lambda_2a_1\xi_{1,\Gamma}\zeta_{2,\Gamma}\right).
\label{scheme:state:erroreq2}
\end{align}
Here, the matrix $S$ is as in \eqref{eq:matrixAs}, $\xi_{i,\Gamma}=\xi_i(x_\Gamma,t)$, $\zeta_{i,\Gamma}=\zeta_i(x_\Gamma,t)$, and
$\mathbf{\zeta_\Gamma}= \left(\begin{array}{c} \zeta_1(x_\Gamma,t)
\\ \zeta_2(x_\Gamma,t)\end{array}\right). $
Using the Cauchy-Schwartz inequality and Young's inequality, we have
\begin{align}
&- J_0(\xi_i,\zeta_i) \leq (J_0(\xi_i,\xi_i))^{1/2} (J_0(\zeta_i,\zeta_i))^{1/2} \leq J_0(\zeta_i,\zeta_i)+\frac{1}{4}J_0(\xi_i,\xi_i) \notag\\
&-\left(\int_{\Omega_i} (\xi_{i})_t\zeta_i dx + \gamma_M J_1((\xi_{i})_t,\zeta_i) \right) \leq
\frac{1}{2}\left(||(\xi_i)_t||^2_{\Omega_i}+\gamma_MJ_1((\xi_i)_t,(\xi_i)_t) \right) +E_{\eta}^\zeta(t)
\notag\\
&\sum_{e\in\me_{h,1}}a_1\xi_{1,e}^{-}[\zeta_1]_e+ \eta\sum_{e\in\me_{h,2}}a_2\xi_{2,e}^{-}[\zeta_2]_e\leq
\frac{1}{2}\left(\sum_{e\in\me_{h,1}}|a_1|[\zeta_1]_e^2
+\eta\sum_{e\in\me_{h,2}}|a_2|[\zeta_2]^2_e\right)
\notag\\
&+\frac{1}{2}\left(\sum_{e\in\me_{h,1}}|a_1| \|\xi_1\|_e^2+\eta\sum_{e\in\me_{h,2}}|a_2| \|\xi_2\|^2_e\right).
\end{align}
Using the above three inequalities, we get
\begin{align}
\frac{d}{dt}E_{\eta}^\zeta(t)&\leq
-\mathbf{\zeta_\Gamma}^T S \mathbf{\zeta_\Gamma}
+E_{\eta}^\zeta(t) + \frac{1}{2}\left(||(\xi_1)_t||^2_{\Omega_1}+\gamma_MJ_1((\xi_1)_t,(\xi_1)_t)
+\frac{\gamma_A}{2}J_0(\xi_1,\xi_1)\right)\notag\\
&+\frac{\eta}{2}\left(||(\xi_2)_t||^2_{\Omega_2}+\gamma_MJ_1((\xi_2)_t,(\xi_2)_t)
+\frac{\gamma_A}{2}J_0(\xi_2,\xi_2)\right)\notag\\
&+\frac{1}{2}\left(\sum_{e\in\me_{h,1}}|a_1| \|\xi_1\|_e^2+\eta\sum_{e\in\me_{h,2}}|a_2| \|\xi_2\|^2_e\right)\notag\\
&+|a_1| \|\xi_1\|_{\Omega_1}\|(\zeta_1)_x\|_{\Omega_1}
+|1-\lambda_1| |a_1|\|\xi_{1}\|_{\Gamma} \|\zeta_{1}\|_{\Gamma}
+|\lambda_1||a_2|\|\xi_{2}\|_{\Gamma}\|\zeta_{1}\|_{\Gamma}\notag\\
&+\eta|a_2|\|\xi_2\|_{\Omega_2}\|(\zeta_2)_x\|_{\Omega_2}
+\eta|1+\lambda_2||a_2|\|\xi_{2}\|_{\Gamma}\|\zeta_{2}\|_{\Gamma}
+\eta|\lambda_2||a_1|\|\xi_{1}\|_{\Gamma}\|\zeta_{2}\|_{\Gamma}.
\label{ieq:error:rhs0}
\end{align}
Note that the approximation properties of $\pi_{h,i}$ (equation \eqref{interpolation} with $k=0,1$),  together with the trace inequalities \eqref{eq:traceH1}-\eqref{eq:traceH1G}, and the stability of the extension operator \eqref{eq:stabextension} yields
 \begin{align}\label{eq:projectionerror}
\|\xi_i\|_{\Omega_i}^2 &\lesssim h^{2r+2}\|u_i\|_{r+1,\Omega_i}^2,\\
\sum_{e\in\me_{h,i}} \|\xi_i\|_e^2 &\lesssim h^{2r+1}\|u_i\|_{r+1,\Omega_i}^2, \quad
\|\xi_i\|_\Gamma^2 \lesssim h^{2r+1}\|u_i\|_{r+1,\Omega_i}^2,
 \label{eq:projectionerror2} \\
J_s(\xi_i,\xi_i)&\lesssim h^{2r+1+s}||u_i||^2_{{r+1,\Omega_i}},  \quad i=1,2. \label{eq:errorJ0}
\end{align}
Using \eqref{eq:projectionerror}, Young's inequality, and the inverse inequality \eqref{inverse:discrete1} we have
\begin{align}\label{ieq:error:xi}
&\|\xi_i\|_{\Omega_i}\|(\zeta_i)_x\|_{\Omega_i}
\lesssim h^{-2}||\xi_i||_{\Omega_i}^2+h^{2}||(\zeta_i)_x||_{\Omega_i}^2
\lesssim
h^{2r}||u_i||^2_{r+1,\Omega_i}
+||\zeta_i||^2_{\Omega_i},\, i=1,2.
\end{align}
Using the trace inequality \eqref{eq:traceH1G}, the inverse inequality \eqref{inverse:discrete1}, Young's inequality, and \eqref{eq:projectionerror2} we have  for $i,j=1,2$,
\begin{align}
\|\xi_{i}\|_{\Gamma }\|\zeta_{j}\|_{\Gamma }
\lesssim
\|\xi_{i}\|_{\Gamma }h^{-\frac{1}{2}}\|\zeta_{j}\|_{\Omega_i}
\lesssim  h^{-1}||\xi_i||^2_{\Gamma}+||\zeta_j||^2_{\Omega_j}
\lesssim  h^{2r}||u_i||^2_{r+1,\Omega_i}+||\zeta_j||^2_{\Omega_j}.
\end{align}
Furthermore, since by assumption $(u_i)_t\in H^{r+1}({\Omega_i})$, we have similar estimates as \eqref{eq:projectionerror} and \eqref{eq:errorJ0} for $(\xi_i)_t$ and hence
\begin{align}
||(\xi_i)_t||^2_{\Omega_i}+\gamma_MJ_1((\xi_i)_t,(\xi_i)_t)&\lesssim h^{2r+2} ||(u_i)_t||^2_{r+1,\Omega_i}.
\label{ieq:projectionerror3}
\end{align}
Therefore, combining the inequality \eqref{ieq:error:rhs0} with the inequalities \eqref{eq:errorJ0}-\eqref{ieq:projectionerror3}, and using that $S$ is positive semi-definite, we have
\begin{align}
\frac{d}{dt}E_{\eta}^\zeta(t) \lesssim E_{\eta}^\zeta(t)+h^{2r}.
\label{ieq:error:dE}
\end{align}
Similar to the analysis in \cite{fu2021high} we also have that the initial error $E_{\eta}^\zeta(0) \lesssim h^{2r}$. Then, using Gr\"{o}nwall's inequality  we have  $E_{\eta}^\zeta(t) \lesssim C_th^{2r}$, where $C_t$ denotes a constant depending on time $t$. Using the definition of $E_{\eta}^\zeta(t)$ we have
$$
\min\{1,\eta\}\sum_{i=1}^2\left(\|\zeta_i\|^2_{\Omega_i}
+\gamma_MJ_1(\zeta_i,\zeta_i)\right)
\lesssim E_{\eta}^\zeta(t) \lesssim C_th^{2r}.
$$
Finally,  applying the triangle inequality, using the estimate \eqref{eq:projectionerror} and the bound above for  $||\zeta_i||^2_{\Omega_i}$,  we have the error estimate
\begin{align}
||u-u_h||^2_{\Omega_1 \cup \Omega_2}=\sum_{i=1}^2||\xi_i+\zeta_i||^2_{\Omega_i} \lesssim  \sum_{i=1}^2\left(||\xi_i||^2_{\Omega_i}+||\zeta_i||^2_{\Omega_i}\right) \lesssim h^{2r}.
\end{align}

Note that the error estimate depends on the parameter $\eta$, which is used in the stability analysis to ensure that the matrix $S$ is positive semi-definite and the scheme is stable. We point out that the estimate we have shown is suboptimal, but in the numerical computations we get optimal accuracy.

\end{proof}

\end{appendix}

\renewcommand\refname{Reference}
\bibliographystyle{abbrv}
\bibliography{CutDG}
\end{document}